\theoremstyle{plain}
\newtheorem{thrm}{Theorem}[section]
\newtheorem{lemma}[thrm]{Lemma}
\newtheorem{rmrk}[thrm]{Remark}
\newtheorem{dfn}[thrm]{Definition}
\begin{document}
\newcommand{\psr}{P^-}
\newcommand{\qr}{Q_\rho}
\newcommand{\sn}{\mathbb{S}^{n-1}}
\newcommand{\SL}{\mathcal L^{1,p}( D)}
\newcommand{\Lp}{L^p( Dega)}
\newcommand{\CO}{C^\infty_0( \Omega)}
\newcommand{\Rn}{\mathbb R^n}
\newcommand{\Rm}{\mathbb R^m}
\newcommand{\R}{\mathbb R}
\newcommand{\Om}{\Omega}
\newcommand{\Hn}{\mathbb H^n}
\newcommand{\aB}{\alpha B}
\newcommand{\eps}{\ve}
\newcommand{\BVX}{BV_X(\Omega)}
\newcommand{\p}{\partial}
\newcommand{\IO}{\int_\Omega}
\newcommand{\bG}{\boldsymbol{G}}
\newcommand{\bg}{\mathfrak g}
\newcommand{\bz}{\mathfrak z}
\newcommand{\bv}{\mathfrak v}
\newcommand{\Bux}{\mbox{Box}}
\newcommand{\e}{\ve}
\newcommand{\X}{\mathcal X}
\newcommand{\Y}{\mathcal Y}
\newcommand{\W}{\mathcal W}
\newcommand{\la}{\lambda}
\newcommand{\vf}{\varphi}
\newcommand{\rhh}{|\nabla_H \rho|}
\newcommand{\Ba}{\mathcal{B}_\beta}
\newcommand{\Za}{Z_\beta}
\newcommand{\ra}{\rho_\beta}
\newcommand{\na}{\nabla_\beta}
\newcommand{\vt}{\vartheta}

\numberwithin{equation}{section}

\newcommand{\RN} {\mathbb{R}^N}
\newcommand{\Sob}{S^{1,p}(\Omega)}
\newcommand{\Dxk}{\frac{\partial}{\partial x_k}}
\newcommand{\Co}{C^\infty_0(\Omega)}
\newcommand{\Je}{J_\ve}
\newcommand{\beq}{\begin{equation}}
\newcommand{\bea}[1]{\begin{array}{#1} }
\newcommand{\eeq}{ \end{equation}}
\newcommand{\ea}{ \end{array}}
\newcommand{\eh}{\ve h}
\newcommand{\Dxi}{\frac{\partial}{\partial x_{i}}}
\newcommand{\Dyi}{\frac{\partial}{\partial y_{i}}}
\newcommand{\Dt}{\frac{\partial}{\partial t}}
\newcommand{\aBa}{(\alpha+1)B}
\newcommand{\GF}{\psi^{1+\frac{1}{2\alpha}}}
\newcommand{\GS}{\psi^{\frac12}}
\newcommand{\HFF}{\frac{\psi}{\rho}}
\newcommand{\HSS}{\frac{\psi}{\rho}}
\newcommand{\HFS}{\rho\psi^{\frac12-\frac{1}{2\alpha}}}
\newcommand{\HSF}{\frac{\psi^{\frac32+\frac{1}{2\alpha}}}{\rho}}
\newcommand{\AF}{\rho}
\newcommand{\AR}{\rho{\psi}^{\frac{1}{2}+\frac{1}{2\alpha}}}
\newcommand{\PF}{\alpha\frac{\psi}{|x|}}
\newcommand{\PS}{\alpha\frac{\psi}{\rho}}
\newcommand{\ds}{\displaystyle}
\newcommand{\Zt}{{\mathcal Z}^{t}}
\newcommand{\XPSI}{2\alpha\psi \begin{pmatrix} \frac{x}{\left< x \right>^2}\\ 0 \end{pmatrix} - 2\alpha\frac{{\psi}^2}{\rho^2}\begin{pmatrix} x \\ (\alpha +1)|x|^{-\alpha}y \end{pmatrix}}
\newcommand{\ZZ}{ \begin{pmatrix} xx^{t} & (\alpha + 1)|x|^{-\alpha}x y^{t}\\
     (\alpha + 1)|x|^{-\alpha}x^{t} y &   (\alpha + 1)^2  |x|^{-2\alpha}yy^{t}\end{pmatrix}}
\newcommand{\norm}[1]{\lVert#1 \rVert}
\newcommand{\ve}{\varepsilon}
\newcommand{\D}{\operatorname{div}}

\title[Parabolic Harnack etc.]{An Intrinsic  Harnack inequality for some non-homogeneous  parabolic equations in non-divergence form}

\author{Vedansh Arya*}
\address{Tata Institute of Fundamental Research\\
Centre For Applicable Mathematics \\ Bangalore-560065, India}\email[Vedansh Arya]{vedansh@tifrbng.res.in}


%
\thanks{*vedansh@tifrbng.res.in}
\thanks{Tata Institute of Fundamental Research, Centre For Applicable Mathematics, Bangalore-560065, India}



%
%
%
\keywords{}
\subjclass{35K55, 35B45}

\maketitle
\begin{abstract}

In this paper, we establish a scale invariant Harnack inequality for some inhomogeneous parabolic equations  in a suitable intrinsic geometry   dictated by the nonlinearity.  The class of equations that we consider correspond to the parabolic counterpart  of the   equations studied by Julin in \cite{j} where a generalized Harnack inequality was obtained which quantifies the strong maximum principle.  Our version of parabolic Harnack  (see Theorem \ref{mthm})  when restricted to the elliptic case   is  however quite different from that in \cite{j}. The key new feature   of this work is an appropriate modification of the  stack of  cubes covering  argument  which is   tailored for the nonlinearity that we consider.

\end{abstract}

\tableofcontents
 
\section{Introduction and the statement of the main result}

In this paper we consider parabolic equations of the type
\begin{align}\label{maineq}
    F(D^2u, Du, x, t)-u_t=0,
\end{align}
where  $F$ is assumed to be uniformly elliptic in the Hessian and has  a certain nonlinear growth in the gradient variable. More precisely,  we assume that  there exist constants $0 < \lambda \leq \Lambda$ such that 
\begin{align*}
    \lambda Tr(N) \leq F(M+N,p,x,t)-F(M,p,x,t) \leq \Lambda Tr(N)
\end{align*}
$\forall\  N \geq 0$  and for every $(p,x,t) \in \R^n \times Q_2$. We furthermore assume that $F$ has the following growth dependence in the gradient variable,
\begin{align}\label{nonlin}
    |F(0,p,x,t)| \leq \phi(|p|)
\end{align}
for every $(p,x,t) \in \R^n \times Q_2$  and where  $\phi : [0,\infty) \rightarrow [0,\infty)$ is of the form $\phi(t)=\eta(t)t$ and satisfies the following conditions analogous to that in \cite{j}:
\begin{enumerate}
    \item[(P1)] $\phi : [0,\infty) \rightarrow [0,\infty)$ is increasing, locally Lipschitz continuous in $(0,\infty)$ and $\phi(t)\geq t$ for every $t\geq0$. Moreover, $\eta : [0,\infty) \rightarrow [1,\infty)$ is nonincreasing on $(0,1)$ and nondecreasing on $[1,\infty]$;
    \item[(P2)] $\eta$ satisfies 
    \begin{equation*}
        \underset{t\rightarrow\infty}{\text{lim}}\frac{t\eta^{'}(t)}{\eta(t)}\text{log}(\eta(t))=0;
    \end{equation*}
    \item[(P3)] There is a constant $\Lambda_0$ such that 
    \begin{align*}
        \eta(st)\leq \Lambda_0\eta(s)\eta(t);
    \end{align*}
    for every $s$, $t\in (0,\infty)$.

\end{enumerate}

Following the ideas of Caffarelli \cite{Ca},  we will replace the equation (\ref{maineq}) by two extremal  inequalities which takes into account the   ellipticity assumption and the  growth condition on the  drift term. In other words,  we assume that $u \in C(Q_2)$ is a viscosity supersolution of
\begin{equation}\label{eq1}
    P^-(D^2u)-u_t\leq\phi(|Du|)
\end{equation}
and  a viscosity subsolution of 
\begin{equation}\label{eq2}
    P^+(D^2u)-u_t\geq-\phi(|Du|),
\end{equation}
where $P^{\pm}$  correspond to the extremal Pucci operators as defined in \eqref{pucci}. We refer to Section \ref{s:n} for the precise notion of viscosity sub/supersolutions.  

Before proceeding further, we make the following discursive remark.

\begin{rmrk}
Throughout this paper, by a universal constant, we refer to a constant $C$ which  depends only  on the  ellipticity constants,  the nonlinearity $\phi$ and the  dimension $n$.\end{rmrk}

\subsection*{Statement of the main result}
We now state the main result of the paper which is regarding  the validity of a parabolic Harnack type inequality  for solutions to \eqref{maineq} in a  suitable intrinsic geometry    corresponding to the nonlinearity.

\begin{thrm}\label{mthm}
Let $u \in C(Q_2)$ be a positive viscosity supersolution of (\ref{eq1}) and viscosity subsolution of (\ref{eq2}). There is a universal constant $C>0$ such that 
\begin{align}\label{Harnack}
    \underset{A}{\text{sup}}\hspace{0.8mm} u(a_0 x, a_0^2 t) \leq  C u(0,0) \hspace{2mm}\text{for} \hspace{2mm} a_0=\frac{u(0,0)}{C(\phi(u(0,0))+u(0,0))},
\end{align}
where $A=\Big\{(x,t):|x|_{\infty} \leq \frac{c_n}{2}, -1+ \frac{c_n^2}{4} \leq t \leq -1+\frac{c_n^2}{2}\Big\}$. Here $c_n$ depends only on $n$.
\end{thrm}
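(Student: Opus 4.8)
The plan is to follow the classical De Giorgi--Krylov--Safonov route to Harnack, but carried out in the intrinsic cylinders dictated by $\phi$, and to reduce everything to a decay/expansion-of-positivity statement that is then iterated via a modified stacked-cubes argument. First, I would normalize: after the intrinsic scaling $(x,t)\mapsto(a_0 x, a_0^2 t)$ with $a_0 = u(0,0)/\big(C(\phi(u(0,0))+u(0,0))\big)$, the rescaled function $v(x,t) = u(a_0 x, a_0^2 t)/u(0,0)$ satisfies $v(0,0)=1$ and, crucially, still satisfies extremal inequalities of the form \eqref{eq1}--\eqref{eq2} but now with a \emph{rescaled} nonlinearity whose growth is controlled by a universal constant on the relevant cylinder; this is exactly where the structural hypotheses (P1)--(P3) on $\eta$, particularly the almost-multiplicativity (P3) and the logarithmic condition (P2), are used to show the intrinsic scaling does not destroy the equation's form. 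The point of choosing $a_0$ this way is that $\phi(u(0,0)) a_0 \approx u(0,0)$, so the drift term on the unit-size cylinder becomes comparable to $v$ itself and hence harmless.

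Next, the core of the argument is a one-step improvement: if a subsolution $v$ of \eqref{eq2} is positive and $v(0,0)=1$, then $v$ is bounded below by a universal constant on a slightly later, slightly smaller intrinsic cylinder — this is the parabolic expansion of positivity. I would obtain this from two ingredients: (i) a weak Harnack / measure-to-pointwise estimate for supersolutions of \eqref{eq1} (a barrier argument using the Pucci operator produces the fundamental-solution-type subsolution, adapted to the nonlinearity), giving that the set $\{v \le \text{small}\}$ cannot occupy too large a measure portion of an intrinsic cube; and (ii) a Krylov--Safonov covering (cube decomposition) that upgrades this measure statement into a pointwise lower bound propagated forward in time. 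The role of the drift term $\phi(|Dv|)$ in the barrier computation is controlled precisely because of the normalization in the previous paragraph, so the barriers are the usual Pucci barriers up to universal errors.

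The main obstacle — and, per the abstract, the genuinely new contribution — is the covering step: the standard stacked-cubes argument of Caffarelli--Safonov propagates positivity along a fixed geometry, but here each application of expansion-of-positivity changes the intrinsic scale (because the gradient bound, and hence $\phi$, changes with the size of $v$), so the cubes in the stack are not self-similar. I would therefore set up a modified stack where the side-lengths of successive cubes are adjusted according to $\phi$ evaluated at the current infimum of $v$, check using (P1)--(P3) that the total time-length of the stack remains bounded below by a universal constant (so the stack actually reaches the target set $A$) while the number of steps needed to reach a given height is logarithmically controlled (this is where (P2) enters decisively, bounding $\eta$'s growth so the geometric series of scales converges appropriately), and then iterate the one-step improvement along this non-uniform stack. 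Combining the forward propagation of positivity from $(0,0)$ with the local maximum principle / $L^\varepsilon$-bound for subsolutions on $A$ yields $\sup_A v \le C$, which unwinds to \eqref{Harnack}. The delicate bookkeeping is entirely in showing the modified stack's geometry closes up under the hypotheses on $\phi$; the analytic estimates (barriers, measure estimates) are essentially the standard ones once the normalization is in place.
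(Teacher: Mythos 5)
Your high-level plan — normalize by the intrinsic scaling $a_0$ so that the drift becomes universally controlled (Lemma \ref{scaling}), build a Pucci-type barrier to get a basic measure estimate via ABP (Lemmas \ref{barrier}, \ref{measureuniform}), then iterate this estimate through a modified, non-self-similar stack of parabolic cubes whose successive side ratios are dictated by $\eta$ at the current level, and finally combine the resulting weak Harnack ($L^\varepsilon$) bound with a local maximum principle for the subsolution — is indeed the architecture of the paper's proof. Identifying the stack modification as the new ingredient is exactly right. However, there are two places where your sketch is imprecise enough that the argument would not actually close as written, and they are worth flagging.

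First, you frame the ``one-step improvement'' as an expansion-of-positivity statement for the \emph{sub}solution, asserting that $v(0,0)=1$ forces a pointwise lower bound on a later cylinder. That is not the right objectwise assignment: the measure estimate (Lemma \ref{measureuniform}) is a \emph{super}solution estimate and, in the paper, is used in contrapositive form — if $\inf_{K_3}u\le 1$ then $\{u\le L\}$ occupies a $\mu$-fraction of $K_1$ — which is then pushed through the stack to produce the $L^\varepsilon$ estimate \eqref{lepsilon}. The pointwise $\sup$-bound over $A$ does not come from propagating a lower bound forward; it comes from a separate subsolution argument (Lemma \ref{super}, a doubling/growth alternative) that uses both the $L^\varepsilon$ estimate and the measure estimate to show a large value at one point forces an even larger value nearby, iterated to a contradiction via continuity (Lemma \ref{final}).

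Second — and this is the more serious gap — you do not explain how the modified stack is guaranteed to behave. You say the cube sizes should be ``adjusted according to $\phi$ at the current infimum of $v$'' and that (P2) makes ``the geometric series of scales converge,'' but you also claim the total time-length of the stack must stay ``bounded \emph{below}'' so that it reaches $A$. That is backwards: what one actually needs is (a) that the cubes stay small enough for the rescaled function to remain a supersolution at each level (this forces the side lengths to shrink like $a_k\sim 1/\big(k\,\eta(L^k)\big)$, so that $\sum a_k^2<\infty$ and the stack fits inside $Q_1$), and (b) a separate argument that a dense high super-level set in $K_1$ \emph{forces} the initial cube to already be small, i.e.\ Lemma \ref{decay}. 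Without (b), nothing stops the stack from being built on a cube that is too large for the scaling Lemma \ref{scaling} to apply, and the induction in the Calder\'on--Zygmund step (Lemma \ref{cald}) would have no way to verify $\bar K^m\subset A_k$. This decay lemma, whose proof tracks when the growing stack first crosses the spatial axis $\{x=0\}$ and the time level $\{t=0\}$ and derives a contradiction with $u(0,0)=1$, is the technical heart that your sketch omits; the almost-multiplicativity (P3) and the limit $\eta(ct)/\eta(t)\to1$ from (P2) are used precisely there to guarantee the ratios $a_{k-1}/a_k$ stay $\le 2$ past some universal $k_1$, so the stack grows slowly enough for this bookkeeping to succeed.
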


\begin{rmrk}
Theorem \ref{mthm} roughly  ensures that  in a cube of size  $a_0 \sim \frac{u(0,0)}{\phi(u(0,0))+u(0,0)}$, the  parabolic Harnack inequality   holds. It is to be noted that when $\phi(t) \equiv t$, we have that $a_0 \approx 1$ and  consequently \eqref{Harnack} reduces to the scale invariant Krylov-Safonov type parabolic Harnack inequality as in \cite{KS}.  We would also like to  mention that in view of a counterexample in \cite{j}, the standard  Harnack inequality doesn't hold for solutions to \eqref{maineq}.

\end{rmrk}

\begin{rmrk}
In section \ref{s:n}, we provide an explicit example of a function satisfying the extremal inequalities in  \eqref{eq1} -\eqref{eq2}  above  which vanishes in finite time. Such an example would thus demonstrate  that the time lag in the Harnack inequality  for positive solutions to \eqref{eq1}-\eqref{eq2}   has to  depend on the solution in general. \end{rmrk}

 Now in  order to provide a proper perspective to our work, we  mention that   in the time independent case,  Julin in \cite{j}  showed that   nonnegative   viscosity solutions to
 \begin{equation}
 \begin{cases}
 P^-(D^2u)\leq\phi(|Du|) 
 \\
 P^+(D^2u)\geq-\phi(|Du|),
 \end{cases}
 \end{equation} 
in $B_2$  satisfy the following generalized Harnack inequality,
 \begin{equation}\label{eHarnack}
 \int_{\inf_{B_1} u}^{\sup_{B_1} u} \frac{dt}{ \phi(t) + t} \leq C.
 \end{equation}
 Such a Harnack inequality as in \eqref{eHarnack}  above quantifies the strong maximum principle.   Moreover  as mentioned above, a  counterexample in \cite{j} shows that the standard Harnack inequality cannot hold in this setting. The proof of the Harnack inequality in \cite{j} involves a  fairly delicate  adaptation of the Vitali type covering argument which utilizes the slow growth assumption on $\eta$ in a very crucial way.   It is to be noted that the classical proof of the Krylov-Safonov type weak Harnack estimate    as  in \cite{Ca}   for elliptic equations and \cite{W1} for the parabolic equations is  based on a  basic measure estimate which uses the Alexandrov-Bakeman-Pucci (ABP) type maximum principle following which  an   iterative argument is set up   on the super level sets of supersolutions  using the   Calderon-Zygmund decomposition.   In the parabolic case, an additional  ``stack of cubes" covering argument  is required which takes into account the time lag between the measure and the pointwise information in the basic measure estimate.  The switch from the Calderon-Zygmund type decomposition to the more elementary Vitali  argument is relatively well known in the elliptic setting ( see for instance \cite{IS1}, \cite{Sa}).  However in the parabolic case, because of such a  time lag in the measure estimate,  adapting the  Vitali type covering which takes into account the parabolic geometry  appears like a serious obstruction. Thus  it is not clear to us  as to whether  the methods in \cite{j} can be generalized to the parabolic case  in order to obtain an analogous parabolic Harnack inequality  as   \eqref{eHarnack} above. Therefore in this present work, we obtain a  new intrinsic framework where a  parabolic Harnack inequality can be established for \eqref{maineq}. Moreover in the course of the proof, we also show that in such a framework, the weak Harnack estimate for positive supersolutions also  remains  valid. This makes our work somewhat different from  \cite{j} where instead  a conditional weak Harnack estimate  is proven ( see Lemma  4.8 in \cite{j}) which however is optimal in the extrinsic Euclidean geometry and  suffices for the Harnack estimate \eqref{eHarnack}.
 
\medskip
 
We  proceed as in  the classical case as   in \cite{W1},  but  similar to that in \cite{j},  the   difficulty arises  because of the fact that if $u$ solves (\ref{eq1}) and $M>0$,  then 
\begin{align*}
    v(x,t)=\frac{u(x,t)}{M}
\end{align*}
is a solution of $$P^-(D^2u)-u_t\leq \Lambda_0\eta(M){\phi}(|Du|),$$ and thus the growth of the nonlinearity gets altered. However after further rescaling, 
\begin{align*}
    v(x,t)=u(rx,r^2t)
\end{align*}
 we observe  that  $v$ solves $$P^-(D^2v)-v_t\leq \Lambda_0 r \eta(1/r){\phi}(|Dv|).$$ At this point, by noting that   $r \eta(1/r) \to 0$ as $r \to 0$, we obtain that in a small enough region dictated by the nonlinearity, the growth conditions on the nonlinear term remains comparable to the initial assumptions   and thus one can normalize and use the equation in such  a region. 
 
Subsequently, the basic measure estimate is obtained via the ABP type comparison principle which relies on a somewhat  subtle computation of an appropriate  barrier  function ( see the proof of Lemma \ref{barrier}). The $L^{\epsilon}$-estimate is then obtained  by  applying  the measure estimate repeatedly on  appropriate normalized solutions using a   delicate stack of cubes covering argument. Over here, we would like to emphasize that  the stack of cubes defined in \cite{W1} ( see also \cite{is})  doesn't  work in our inhomogeneous situation. Therefore, we define  a different stack of cubes which is  tailor-made  for our nonlinearity. This constitutes the key  novelty of this work. Moreover, in our entire analysis, the  precise growing nature of $\eta$  crucially comes into play at various steps. Following the   $L^{\epsilon}$-estimate, the  subsolution estimate is then obtained by adapting the ideas in \cite{j} to our parabolic situation. We refer to the subsquent work  \cite{AJ} where a related boundary Harnack inequality in $C^{1,1}$ domains has been established for such inhomogeneous elliptic equations. 

\medskip

In closing, we  would like to mention that  various types of intrinsic Harnack inequalities for divergence  form parabolic equations modelled on 
\[
\operatorname{div}(|Du|^{p-2} Du) = u_t,
\]
have been obtained  in a  series of   fundamental works by DiBenedetto, Gianazza and Vespri (see  \cite{DGV1, DGV2, DGV3}) which in part has also inspired our present work. See also \cite{Ku} where an intrinsic weak Harnack inequality has been obtained for such equations.    It remains to be seen whether our techniques can be extended to such structures. We also refer to  a  recent interesting work \cite{PV} where intrinsic Harnack inequality for 
\[
|Du|^{\gamma} \Delta_p u= u_t,\ \gamma> 1-p,
\]
has been established  by comparison with explicit  Barenblatt solutions which in turn is inspired by an approach  of  DiBenedetto for the parabolic p-Laplacian. ( see \cite{Di}). 

  We would also  like to mention some other works \cite{IS1, Sa, Wa}  which introduces new techniques
for proving regularity estimates for nondivergence form elliptic and parabolic equations. These works study
equations which are homogeneous but which fail to be uniformly elliptic. The key idea is to
bypass the classical ABP estimate by directly touching the solution from below by paraboloids (or
by other suitable functions).\\

The paper is organized as follows. In Section \ref{s:n}, we introduce some basic notations and notions  and gather some preliminary results.  In Section \ref{s:m}, we prove our main result. Finally in the appendix, we provide proofs of Lemma \ref{barrier} and Lemma \ref{measureuniform} which involves a somewhat delicate and long computation.\\
\\
\textbf{Acknowledgments}

We would like to  thank  Agnid Banerjee for various helpful discussions and suggestions. We would also like to thank the editor for the kind handling of the  paper and also the reviewer for various valuable comments and suggestions  which has substantially  improved the presentation of this article. Especially, we are  grateful to the  reviewer for suggesting the possibility of finding an example of a solution which vanishes in finite time  which we have now put in section \ref{s:n}. Such an example   demonstrates that the intrinsic nature of  our Harnack inequality in Theorem \ref{mthm} is unavoidable.

\section{Notations and Preliminaries}\label{s:n}
A point in space time will be denoted by $(x,t)$ where $(x,t) \in \R^n \times \R.$
We will denote a point in space by $x$, $y$ etc.. We will use $0$ for both origin of $\R^n$ and real number. We will denote Euclidean norm of $x$ in $\R^n$ by $|x|$.
$Df$ and $D_{x,t}f$ will denote the gradient of $f$ in $x$ variable and gradient of $f$ in $x$ and $t$ both variables respectively. $f_t$ will denote partial derivative of $f$ with respect to $t$. $D^2f$ will represent the Hessian matrix of $f$ with respect to $x$.
For a given set $A$, $|A|$ will denote the Lebesgue measure of $A$.

A cube of radius $\rho$ and center $(x_0,t_0)$ is defined as following
\begin{align*}
    \qr(x_0,t_0) = \{x \in \R^n:|x-x_0|_{\infty} < \rho \}\times (t_0-\rho^2,t_0),
\end{align*}
where $|x|_{\infty} = \text{max}\{ |x_1|, |x_2|,..., |x_n|\}$.

We define
\begin{align}\label{tildq}
    \tilde {Q}_{\rho}(x_0,t_0) := \{(x,t) \hspace{1mm}: \hspace{1mm}|x-x_0|_{\infty} < 3{\rho},\hspace{1mm} t_0 <t< t_0+9{\rho}^2\}.
\end{align}
We denote $Q_{\rho}(0,0)$ by $Q_{\rho}$ and $\tilde {Q}_{\rho}(0,0)$ by $\tilde {Q}_{\rho}$.

For $(\Omega\times (t_1,t_2)) \subset \R^n \times \R$, 
$\partial_p (\Omega\times (t_1,t_2))$ will denote the parabolic boundary of  $\Omega \times (t_1,t_2)$  and defined as:
\begin{align*}
\partial_p(\Omega\times (t_1,t_2))&=\Omega\times \{t_1\}\cup\partial\Omega\times[t_1,t_2].
\end{align*}

Let $S$ be the space of real $n\times n$ symmetric matrices. We recall the definition of Pucci's extremal operators (for more detail see \cite{cc}). For $M\in S$, Pucci's extremal operators with ellipticity constant $0<\lambda\leq\Lambda$ are defined as
\begin{align}\label{pucci}
    P^-(M,\lambda,\Lambda)=P^-(M)=\lambda\sum_{e_i>0}e_i+\Lambda\sum_{e_i<0}e_i,
    \\
    P^+(M,\lambda,\Lambda)=P^+(M)=\Lambda\sum_{e_i>0}e_i+\lambda\sum_{e_i<0}e_i\notag,
\end{align}
where $e_i$'s are the eigenvalues of $M$.

We recall the definition of a viscosity supersolution of (\ref{eq1}) a viscosity subsolution of (\ref{eq2}).
\begin{dfn}
A function $u :Q_{\rho} \rightarrow \R$ is a viscosity supersolution of (\ref{eq1}) in $Q_\rho$ if it is continuous and the following holds: if $(x_0,t_0)\in Q_\rho$ and $\varphi \in C^2(Q_\rho)$ is such that $\varphi\leq u$ and $\varphi(x_0,t_0)=u(x_0,t_0)$ then
\begin{align*}
    P^-(D^2\varphi(x_0,t_0))-\varphi_t(x_0,t_0)\leq\phi(|D\varphi(x_0,t_0)|). 
\end{align*}
\end{dfn}
\begin{dfn}
A function $u :Q_{\rho} \rightarrow \R$ is a viscosity subsolution of (\ref{eq2}) in $Q_\rho$ if it is continuous and the following holds: if $(x_0,t_0)\in Q_\rho$ and $\varphi \in C^2(Q_\rho)$ is such that $\varphi\geq u$ and $\varphi(x_0,t_0)=u(x_0,t_0)$ then
\begin{align*}
    P^+(D^2\varphi(x_0,t_0))-\varphi_t(x_0,t_0)\geq-\phi(|D\varphi(x_0,t_0)|). 
\end{align*}
\end{dfn}

We will state some more properties of $\eta$. See Proposition 2.3 in \cite{j}.
\begin{lemma}\label{etaprop}
Let $\eta$ satisfy the conditions $(P1)-(P3)$. Then the following hold:
\begin{itemize}
    \item[(i)] For every $c>0$,  we have 
    \begin{equation*}
        \underset{t\rightarrow\infty}{\text{lim}}\frac{\eta(ct)}{\eta(t)}=1.
    \end{equation*}
    \item[(ii)] For every $\gamma>0$, we have
    \begin{equation*}
        \underset{t\rightarrow\infty}{\text{lim}}\frac{\eta(t)}{t^\gamma}=0.
    \end{equation*}
    \item[(iii)] There is a constant $\Lambda_1$ such that for every $t>0$ it holds that
    \begin{align*}
        \eta(\eta(t)t)\leq \Lambda_1 \eta(t).
    \end{align*}
    \item[(iv)] There is a constant $\Lambda_2$ such that for every $t>0$ and $0<r<s$ it holds that
    \begin{align*}
        r\eta(t/r)\leq \Lambda_2s\eta(t/s).
    \end{align*}
\end{itemize}

\end{lemma}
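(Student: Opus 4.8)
The plan is to deduce all four items from one quantitative form of the slow-growth hypothesis (P2), together with the doubling inequality (P3) for item (iv). Since $\phi$ is locally Lipschitz on $(0,\infty)$ and $\eta(t)=\phi(t)/t$, the function $\eta$ --- and hence $\log\eta$, as $\eta\ge1$ --- is locally Lipschitz on $(0,\infty)$; in particular $\eta$ is differentiable a.e. with $\eta'\ge0$ a.e. on $[1,\infty)$, and $\log\eta$ is absolutely continuous on compact subintervals of $(0,\infty)$, so the fundamental theorem of calculus applies to it. We may assume $\lim_{t\to\infty}\eta(t)=\infty$ --- this limit exists by monotonicity of $\eta$ on $[1,\infty)$, and if it is finite then (i)--(iv) all reduce to elementary monotonicity and continuity estimates --- so that $\log\eta(t)\to\infty$.

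The key tool is the following \emph{master estimate}: given $\varepsilon>0$, by (P2) there is $T\ge1$ with $\eta(T)>1$ and $\tfrac{s\eta'(s)}{\eta(s)}\log\eta(s)\le\varepsilon$ for a.e. $s\ge T$; since $\eta'\ge0$ a.e. and $\log\eta>0$ on $[T,\infty)$, this rearranges to $\tfrac{\eta'(s)}{\eta(s)}\le\tfrac{\varepsilon}{s\log\eta(s)}$ a.e. there, and integrating while using that $\log\eta$ is nondecreasing gives
\begin{equation*}
\log\eta(b)-\log\eta(a)=\int_a^b\frac{\eta'(s)}{\eta(s)}\,ds\le\frac{\varepsilon}{\log\eta(a)}\,\log\frac{b}{a},\qquad T\le a\le b .
\end{equation*}
Item (i): for $c>1$ and $t\ge T$ this yields $0\le\log\tfrac{\eta(ct)}{\eta(t)}\le\tfrac{\varepsilon\log c}{\log\eta(t)}\to0$ as $t\to\infty$, so $\eta(ct)/\eta(t)\to1$; the case $c<1$ follows by applying this to $1/c$ with argument $ct$, and $c=1$ is trivial. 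Item (ii) follows from (i) by iteration: given $\gamma>0$, choose $\varepsilon\in(0,2^{\gamma}-1)$ and, by (i), $T\ge1$ with $\eta(2\tau)\le(1+\varepsilon)\eta(\tau)$ for $\tau\ge T$, so that $\eta(2^{k}T)\le(1+\varepsilon)^{k}\eta(T)$; then for $t\in[2^{k}T,2^{k+1}T)$ one has $\eta(t)/t^{\gamma}\le C\,\big((1+\varepsilon)2^{-\gamma}\big)^{k}$, which tends to $0$ as $k\to\infty$ because $(1+\varepsilon)2^{-\gamma}<1$.

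For item (iii), the crude bound $\eta(\eta(t)t)\le\Lambda_0\eta(\eta(t))\eta(t)$ coming from (P3) is useless because $\eta(\eta(t))$ is unbounded; the point is to apply the master estimate along the interval $[t,\eta(t)t]$, whose endpoint ratio is exactly $\eta(t)$. For $t\ge T$ this gives, using $\log\eta(s)\ge\log\eta(t)$ on that interval,
\begin{equation*}
\log\eta\big(\eta(t)t\big)-\log\eta(t)\le\frac{\varepsilon}{\log\eta(t)}\,\log\frac{\eta(t)t}{t}=\frac{\varepsilon}{\log\eta(t)}\,\log\eta(t)=\varepsilon ,
\end{equation*}
so $\eta(\eta(t)t)\le e^{\varepsilon}\eta(t)$ for $t\ge T$; for $t$ with $\eta(t)t\le1$ the inequality $\eta(t)t\ge t$ together with monotonicity of $\eta$ on $(0,1]$ gives $\eta(\eta(t)t)\le\eta(t)$ directly, and the remaining bounded range of $t$ is handled by continuity, so (iii) holds with $\Lambda_1$ the largest of these constants. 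Finally, item (iv): with $\mu:=s/r>1$ we have $t/r=\mu\,(t/s)$, so (P3) gives
\begin{equation*}
\frac{r\,\eta(t/r)}{s\,\eta(t/s)}=\frac1\mu\cdot\frac{\eta\big(\mu\,(t/s)\big)}{\eta(t/s)}\le\frac{\Lambda_0\,\eta(\mu)}{\mu}\le\Lambda_0\sup_{\mu\ge1}\frac{\eta(\mu)}{\mu}=:\Lambda_2 ,
\end{equation*}
and the supremum is finite because $\mu\mapsto\eta(\mu)/\mu$ is continuous on $[1,\infty)$ and tends to $0$ there by (ii). I expect the only real difficulty to be item (iii): one must resist substituting into the doubling property (P3), which is too lossy, and instead read off the exact first-order cancellation from (P2) --- the $1/\log\eta$ weight in the master estimate is precisely what absorbs the factor $\log\eta(t)$ produced by the length of the interval $[t,\eta(t)t]$ --- while also keeping track of the behaviour of $\eta$ near $t=0$, where only monotonicity is available.
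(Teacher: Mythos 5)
Your proof is correct. Note that the paper itself does not prove Lemma \ref{etaprop}; it simply cites Proposition 2.3 of \cite{j}, so there is no in-paper argument to compare against. Your derivation is self-contained, and the structure you follow --- integrating the quantitative form of (P2) to get the ``master estimate'' $\log\eta(b)-\log\eta(a)\le\frac{\varepsilon}{\log\eta(a)}\log(b/a)$, then reading (i) off directly, deducing (ii) from (i) by dyadic iteration, getting (iii) by applying the estimate over $[t,\eta(t)t]$ so that the interval-length factor $\log\eta(t)$ cancels against the $1/\log\eta(a)$ weight, and getting (iv) from (P3) plus the boundedness of $\eta(\mu)/\mu$ supplied by (ii) --- is essentially Julin's argument. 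Two small points worth making explicit if you were to expand this: in the ``master estimate'' the rearrangement $\eta'(s)/\eta(s)\le\varepsilon/(s\log\eta(s))$ needs $\log\eta(s)>0$, which you secure by taking $T$ with $\eta(T)>1$ and $\eta$ nondecreasing on $[T,\infty)$; and in (iii) the ``remaining bounded range'' must genuinely be covered --- when $\lim_{t\to0^+}\phi(t)\ge1$ the set $\{t\le T:\eta(t)t>1\}$ is not bounded away from $0$, but there $\eta(\eta(t)t)=\eta(\phi(t))$ stays bounded while $\eta(t)\to\infty$, so the ratio is still controlled. Neither point affects the validity of the argument.
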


Since the equation (\ref{eq1}) is not scaling invariant, we need the following lemma. Proof is same as in the elliptic case, see Lemma $4.4$ in \cite{j}.
\begin{lemma}\label{scaling}
Let $u\in C(Q_2)$ be a viscosity supersolution of (\ref{eq1}) in $Q_2$. There exists a universal constant $L_2$ such that if $A\in (0, \infty)$ then for every $r\leq r_A$, where
\begin{align*}
    r_A=\frac{A}{L_2(\phi(A)+A)}=\frac{1}{L_2(\eta(A)+1)}
\end{align*}
the rescaled function 
\begin{align*}
    \tilde{u}(x,t):=\frac{u(rx,r^2t)}{A},
\end{align*}
is a supersolution of (\ref{eq1}) in its domain.
 \end{lemma}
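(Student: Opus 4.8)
The plan is to reduce the claim to a direct verification using the viscosity formulation together with the sublinear rescaling behaviour of $\phi$ recorded in the discussion preceding the statement. First I would set $\tilde u(x,t) = u(rx,r^2t)/A$ and take a test function $\varphi \in C^2$ touching $\tilde u$ from below at a point $(x_0,t_0)$ in the (rescaled) domain. Then $\psi(y,s) := A\,\varphi(y/r, s/r^2)$ touches $u$ from below at $(rx_0, r^2 t_0)$, so the supersolution property of $u$ gives $P^-(D^2\psi) - \psi_t \le \phi(|D\psi|)$ at that point. Computing derivatives, $D^2\psi = (A/r^2) D^2\varphi$, $\psi_t = (A/r^2)\varphi_t$, $D\psi = (A/r) D\varphi$, and using positive homogeneity of the Pucci operator $P^-$, this becomes
\begin{align*}
    \frac{A}{r^2}\Bigl(P^-(D^2\varphi) - \varphi_t\Bigr) \le \phi\!\left(\frac{A}{r}|D\varphi|\right).
\end{align*}
Multiplying by $r^2/A$, the right-hand side is $\frac{r^2}{A}\phi\bigl(\tfrac{A}{r}|D\varphi|\bigr) = \frac{r^2}{A}\cdot \eta\bigl(\tfrac{A}{r}|D\varphi|\bigr)\cdot \tfrac{A}{r}|D\varphi| = r\,\eta\bigl(\tfrac{A}{r}|D\varphi|\bigr)|D\varphi|$.

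The core of the argument is then to show that the prefactor $r\,\eta(A|D\varphi|/r)$ is bounded by $\eta(|D\varphi|)$ (so that the right-hand side is $\le \phi(|D\varphi|)$ and $\tilde u$ is a genuine supersolution of \eqref{eq1}) precisely when $r \le r_A$. Here I would split according to the size of $|D\varphi|$, or more cleanly use property (iv) of Lemma \ref{etaprop}: with $t = A|D\varphi|$, $s=1$, and the radius $r<1$, one gets $r\,\eta(A|D\varphi|/r) \le \Lambda_2\,\eta(A|D\varphi|)$, and then by (P3), $\eta(A|D\varphi|) \le \Lambda_0\,\eta(A)\eta(|D\varphi|)$. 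Collecting constants, $r\,\eta(A|D\varphi|/r)|D\varphi| \le \Lambda_2\Lambda_0\,\eta(A)\,\phi(|D\varphi|)$; alternatively, absorbing the $r$-dependence directly, one writes $r\,\eta(A/r)\cdot\eta(|D\varphi|) \le \eta(|D\varphi|)$ provided $r\,\eta(A/r) \le 1$, and using the identity $\phi(A) = \eta(A)A$ this is exactly the condition $r \le \tfrac{A}{L_2(\phi(A)+A)} = \tfrac{1}{L_2(\eta(A)+1)}$ for a suitable universal $L_2$. I would carry out the bookkeeping so that one single universal $L_2$ works for every $A \in (0,\infty)$.

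The step I expect to be the main (minor) obstacle is the case analysis on $|D\varphi|$: when $|D\varphi|$ is small the monotonicity of $\eta$ on $(0,1)$ versus $[1,\infty)$ has to be used carefully, and one must ensure the threshold $r_A$ comes out with the clean form stated — in particular that $r_A$ depends on $A$ only through $\eta(A)$ and not through $|D\varphi|$, which is why property (iv) (the "almost monotonicity" of $r \mapsto r\eta(t/r)$) and the quasi-multiplicativity (P3) are the right tools rather than ad hoc estimates. Since this is precisely the computation carried out in Lemma 4.4 of \cite{j} in the elliptic setting and the parabolic terms $\varphi_t$, $u_t$ scale homogeneously with weight $2$ exactly as the Hessian does, no new difficulty arises in the parabolic case and the same $L_2$ works.
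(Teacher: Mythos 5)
The change-of-variables reduction is fine: passing $\varphi\mapsto\psi(y,s)=A\varphi(y/r,s/r^2)$, using the positive $1$-homogeneity of $P^-$ and the fact that $\psi_t$ scales with the same weight $2$ as $D^2\psi$ (so the parabolic term really does behave like the Hessian, as you note), one reduces to showing
\begin{align*}
r\,\eta\!\left(\tfrac{A|D\varphi|}{r}\right)\;\le\;\eta(|D\varphi|)\qquad\text{for all }|D\varphi|\ge 0,\ r\le r_A.
\end{align*}
This matches the reduction in Lemma 4.4 of \cite{j}.

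The gap is in the verification of this inequality, which is the actual content of the lemma. Your first route applies Lemma \ref{etaprop}(iv) with $s=1$ to get $r\,\eta(A|D\varphi|/r)\le \Lambda_2\,\eta(A|D\varphi|)$ and then (P3) to land on $\Lambda_2\Lambda_0\,\eta(A)\,\eta(|D\varphi|)$. But taking $s=1$ in (iv) throws away all of the $r$-dependence, and since $\eta(A)\ge \eta(1)$ and $\Lambda_2\ge1$, the resulting prefactor $\Lambda_2\Lambda_0\eta(A)$ is $\ge1$ (indeed typically much larger). So the bound you "collect" is $\ge\phi(|D\varphi|)$, not $\le\phi(|D\varphi|)$, and this route simply does not close.

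Your second route — factor via (P3) to reduce to $r\,\Lambda_0\,\eta(A/r)\le 1$ — is the correct one, but the sentence ``using the identity $\phi(A)=\eta(A)A$ this is exactly the condition $r\le r_A$'' is not a proof: the left-hand side involves $\eta(A/r)$, not $\eta(A)$, and there is no algebraic identity relating the two. What one actually has to do is: apply (iv) with $s=r_A$ to get $r\,\eta(A/r)\le \Lambda_2\,r_A\,\eta(A/r_A)$; observe $A/r_A=L_2(\phi(A)+A)\le 2L_2\phi(A)$; use Lemma \ref{etaprop}(iii) (i.e.\ $\eta(\phi(A))\le\Lambda_1\eta(A)$) together with (P3) to get $\eta(2L_2\phi(A))\le\Lambda_0\Lambda_1\eta(2L_2)\eta(A)$ when $2L_2\phi(A)\ge1$, with a companion argument via the monotonicity of $\eta$ on $(0,1)$ when it is not; conclude $r_A\,\eta(A/r_A)\le C\,\eta(2L_2)/L_2$; and finally invoke Lemma \ref{etaprop}(ii) (sub-power growth) to choose $L_2$ universal so that this is $\le 1/(\Lambda_0\Lambda_2)$. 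That is where (ii), (iii), (iv), (P3), and the sign of $\log A$ all enter — the part you flag as a ``minor obstacle'' is in fact the entire point of the lemma, and as written the proposal does not supply it, while the route it does carry through fails.
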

 \begin{dfn}\label{medef}
\textbf{(Monotone envelope of a function)}. The monotone envelope of a lower semi-continuous function $u : Q_\rho(x,t) \rightarrow \R$ is the largest function $v : Q_\rho(x,t) \rightarrow \R$ lying below $u$ which is convex with respect to $x$ and non-increasing with respect to $t$. It is denoted by $\Gamma_u.$
\end{dfn}
We need the following lemma to prove Lemma \ref{measureuniform}. See Lemma $4.4$ in \cite{is}.
\begin{lemma}\label{glem}
If $u$ is $C^{1,1}$ with respect to $x$ and Lipschitz continuous with respect to $t$,
then the function $G : \Omega \times (a,b) \rightarrow \R^{n+1}$ defined as follows:
\begin{align*}
  Gu(x,t)=G(u)(x,t)=(Du(x,t),u(x,t)-x\cdot Du(x,t)),
\end{align*}
is Lipschitz continuous in $(x,t)$ and for a.e $(x,t) \in  \Omega \times (a,b) $,
\begin{align*}
    \text{det}\hspace{1mm} D_{x,t}G(u) = u_t\hspace{1mm} \text{det}\hspace{1mm}D^2u.
\end{align*}
\end{lemma}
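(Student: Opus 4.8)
The plan is to reduce the statement to a direct computation of the $(n+1)\times(n+1)$ Jacobian $D_{x,t}Gu$ at a.e.\ point; once this matrix is written in block form, the determinant identity falls out of a single row operation.

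First I would record the structure $Gu=(Du,\,h)$ with $h(x,t):=u(x,t)-x\cdot Du(x,t)$, and establish the Lipschitz continuity. The first $n$ coordinates of $Gu$ constitute $Du$, which is Lipschitz in $x$ by the $C^{1,1}$ hypothesis in the spatial variable and, by the regularity assumptions on $u$ (cf.\ \cite{is}), also Lipschitz in $t$; hence $Du$ is Lipschitz in $(x,t)$. For the last coordinate, differentiating formally,
\[
\p_{x_k}h=\p_{x_k}u-\p_{x_k}u-\sum_{j}x_j\,\p^2_{x_jx_k}u=-(D^2u\,x)_k,\qquad \p_t h=u_t-x\cdot\p_t(Du),
\]
and since $D^2u\in L^\infty_{\mathrm{loc}}$ (by $C^{1,1}$ in $x$) while $u_t$ and $\p_t(Du)$ are locally bounded, $h$ is locally Lipschitz as well. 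Thus $Gu$ is locally Lipschitz in $(x,t)$ and hence, by Rademacher's theorem, differentiable at a.e.\ $(x,t)\in\Om\times(a,b)$.

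Next I would compute the Jacobian at such a point. Ordering the coordinates of $Gu$ as rows $1,\dots,n+1$ and the variables as columns $x_1,\dots,x_n,t$, the identities above give
\[
D_{x,t}Gu=\begin{pmatrix}D^2u & \p_t(Du)\\ -x^{\top}D^2u & u_t-x^{\top}\p_t(Du)\end{pmatrix},
\]
with $D^2u$ the $n\times n$ Hessian, $\p_t(Du)$ a column, $-x^{\top}D^2u$ a row (here I use the symmetry of the Hessian), and the bottom-right entry a scalar. I would then add to the last row $\sum_{i=1}^{n}x_i$ times the $i$-th row; since the $i$-th row equals $\big((D^2u)_{i,\cdot},\,(\p_t Du)_i\big)$ and $x^{\top}D^2u=(D^2u\,x)^{\top}$, the bottom row becomes $(0,\dots,0,u_t)$. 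A row operation of this kind leaves the determinant unchanged, so expanding the resulting block-triangular matrix along its last row,
\[
\det D_{x,t}Gu=\det\begin{pmatrix}D^2u & \p_t(Du)\\ 0 & u_t\end{pmatrix}=u_t\,\det D^2u,
\]
which is the claimed identity.

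The hard part is making the above fully rigorous: justifying the a.e.\ differentiability of $Gu$, the differentiation of $h$, and especially the symmetry $\p_t\p_{x_i}u=\p_{x_i}u_t$ that makes the off-diagonal entries in the last row cancel. I would handle this by mollifying $u$ in the spatial variable, proving the identity for the smooth approximations $u^{\ve}$ (where every manipulation above is classical), and passing to the limit using the uniform bounds on $D^2u$ and $u_t$ together with $D^2u^{\ve}\to D^2u$ and $u^{\ve}_t\to u_t$ in $L^1_{\mathrm{loc}}$; it is convenient that the only mixed second derivatives, collected in the column $\p_t(Du)$, drop out of the final determinant, so no quantitative control on them is actually needed.
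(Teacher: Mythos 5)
The paper does not prove this lemma; it simply cites Lemma~4.4 of \cite{is}, so there is no ``paper's own proof'' to compare against. Your proof reproduces what is essentially the standard argument from that reference: write $D_{x,t}Gu$ in block form and use the row operation (row $n{+}1$) $\mapsto$ (row $n{+}1$) $+ \sum_i x_i\cdot$(row $i$), which kills the spatial entries of the bottom row and leaves $u_t$ in the corner, giving $\det D_{x,t}Gu = u_t\det D^2u$. That computation is correct, including the use of the symmetry of $D^2u$ to identify $x^{\top}D^2u$ with $(D^2u\,x)^{\top}$, and the observation that the mixed column $\partial_t(Du)$ drops out of the final determinant.

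One step you should not wave through, however, is the assertion that $\partial_t(Du)$ is ``locally bounded'' and hence that $Gu$ is Lipschitz in $(x,t)$. Under the literal hypotheses --- $u$ is $C^{1,1}$ in $x$ (so $|D^2u|$ is bounded) and Lipschitz in $t$ (so $|u_t|$ is bounded) --- the interpolation inequality only gives $|Du(x,t)-Du(x,s)| \lesssim |t-s|^{1/2}$, and this exponent is sharp (consider, e.g., localized copies of $\varepsilon^2\sin(x/\varepsilon)\sin(t/\varepsilon^2)$). So $Du$ Lipschitz in $t$, and hence $Gu$ Lipschitz in $(x,t)$, does not follow from the stated hypotheses alone; your parenthetical ``(cf.~\cite{is})'' is doing real work here. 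This subtlety is already latent in the statement of the lemma as imported from \cite{is}, and in the actual application $\Gamma_v$ carries additional structure (it is the monotone envelope of a semiconcave, Lipschitz function, with a one-sided bound on $(\Gamma_v)_t$ from the equation), so it is not a defect unique to your write-up --- but a careful proof should either add the hypothesis that $\partial_t Du \in L^\infty_{\mathrm{loc}}$, or justify the area-formula step without claiming Euclidean Lipschitz continuity of $Gu$. Everything else, including the mollification scheme for passing to the a.e.\ identity, is sound.
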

We need the following lemma to prove Lemma \ref{measureuniform}. See Lemma $4.13$ in \cite{is}. 
\begin{lemma}\label{setmeas}
If $u \in C(Q_1)$ with $u \geq 0$ on $\partial_p(Q_1)$ and $\text{sup}_{Q_1} u^- =1,$ then
\begin{align*}
     \Big\{(\xi,h) \in \R^n \times \R:|\xi| \leq \frac{1}{4}, \frac{5}{8} \leq -h \leq \frac{6}{8}\Big\} \subset {G\Gamma_u(Q_1 \cap C_u)}.
 \end{align*}
 where $C_u=\{u=\Gamma_u\}$ and $\Gamma_u$ represents the monotone envelope of min$\{u,0\}$ extended by $0$ to $Q_2$.
\end{lemma}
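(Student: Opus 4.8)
This is the geometric covering step behind the parabolic ABP estimate, and the plan is to run a sliding-plane argument adapted to the monotone envelope, exactly as in \cite{is}. Write $w:=\min\{u,0\}$ extended by $0$ to $Q_2$. The first observation is that, for each $t$, the slice $\Gamma_u(\cdot,t)$ equals the convex envelope in $x$ of $\bar w(\cdot,t):=\inf_{s\le t}w(\cdot,s)$: any function convex in $x$, non-increasing in $t$, and lying below $w$ must lie below $\bar w(\cdot,t)$, hence below its convex envelope, whereas that convex envelope is itself such a competitor (it is convex in $x$, and non-increasing in $t$ since $\bar w$ is and the convex envelope is monotone in the underlying function). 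In particular $\Gamma_u(\cdot,-1)\equiv 0$ (because $u\ge 0$ on the bottom of $Q_1$ and $w\equiv 0$ off $Q_1$), $\Gamma_u\ge -1$, and $\Gamma_u$ attains $-1=-\sup_{Q_1}u^-$ at some point whose time coordinate lies in $(-1,0)$.

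Now fix $(\xi,h)$ with $|\xi|\le\frac14$ and $\frac58\le -h\le\frac68$, and study
\[
 m(t):=\min_x\bigl(\Gamma_u(x,t)-\xi\cdot x\bigr),
\]
the minimum being over the $x$-section of the domain. As $\Gamma_u$ is convex in $x$ and non-increasing in $t$, $m$ is continuous and non-increasing. From $\Gamma_u(\cdot,-1)\equiv0$ one gets $m(-1)=-\max_x\xi\cdot x>h$ using the smallness of $|\xi|$ relative to $-h\ge\frac58$, while evaluating $\Gamma_u(x,t)-\xi\cdot x$ at the point where $\Gamma_u=-1$ gives $\inf_t m(t)\le -1+|\xi|\le -\frac34\le h$; hence there is $t_0\in(-1,0)$ with $m(t_0)=h$. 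Take $x_0$ to be an extreme point of the convex compact set of minimizers of $x\mapsto\Gamma_u(x,t_0)-\xi\cdot x$. Then $\ell(x):=h+\xi\cdot x$ supports $\Gamma_u(\cdot,t_0)$ at $x_0$ and does not touch the graph along any nontrivial segment through $x_0$ interior to the domain, which by the standard property of convex envelopes forces $\Gamma_u(x_0,t_0)=\bar w(x_0,t_0)$.

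It remains to locate the contact point with $u$. Since $\Gamma_u(x_0,t_0)=h+\xi\cdot x_0\le -\frac58+\frac14<0$ while $\bar w\equiv 0$ off $Q_1$, the infimum in $\bar w(x_0,t_0)=\inf_{s\le t_0}w(x_0,s)$ is attained at some $s_0\in(-1,t_0]$ and $x_0$ lies in the open $x$-section of $Q_1$. At $(x_0,s_0)$ the chain
\[
 \Gamma_u(x_0,s_0)\le w(x_0,s_0)=\bar w(x_0,t_0)=\Gamma_u(x_0,t_0)\le\Gamma_u(x_0,s_0)
\]
(the last inequality by monotonicity in $t$) collapses to a string of equalities, so $u(x_0,s_0)=\Gamma_u(x_0,s_0)$, i.e.\ $(x_0,s_0)\in Q_1\cap C_u$. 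Finally, since $\Gamma_u(x_0,s_0)=\Gamma_u(x_0,t_0)$ with $s_0\le t_0$ and $\Gamma_u$ is non-increasing in $t$, the supporting inequality $\Gamma_u(x,t_0)\ge\Gamma_u(x_0,t_0)+\xi\cdot(x-x_0)$ (for all $x$) propagates to $\Gamma_u(x,s_0)\ge\Gamma_u(x_0,s_0)+\xi\cdot(x-x_0)$, that is $\xi\in\partial_x\Gamma_u(x_0,s_0)$, while $\Gamma_u(x_0,s_0)-x_0\cdot\xi=m(t_0)=h$. Reading $D\Gamma_u$ through the subdifferential where needed, this says $(\xi,h)\in G\Gamma_u(x_0,s_0)$, which is the claimed inclusion.

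\textbf{Main obstacle.} The genuinely delicate step is the one just above: showing that the minimizer $x_0$ lands in the open section of $Q_1$ and is an honest contact point of $\Gamma_u$ with $u$, together with the time-relocation from $t_0$ to $s_0$. This is exactly where the numerical values $\frac14,\frac58,\frac68$ are exploited — they keep the supporting plane $\ell$ strictly below the trivial boundary value $0$ of the extended $w$ near the lateral boundary, while still letting $\ell$ descend past the depth $1$ of the well created by $\sup_{Q_1}u^-=1$. (One also tacitly uses, as in \cite{is}, that $\Gamma_u$ is $C^{1,1}$ in $x$, so that $G\Gamma_u$ is literally defined via Lemma \ref{glem}.) The rest — continuity and monotonicity of $m$, the extreme-point/convex-envelope fact, and the propagation of the subgradient in $t$ — is routine, and the full computation is carried out in \cite[Lemma 4.13]{is}.
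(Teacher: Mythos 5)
The paper does not prove this lemma; it is quoted verbatim from Imbert--Silvestre \cite[Lemma 4.13]{is}, and your sliding-plane argument is a faithful reconstruction of the proof given there: the identification of $\Gamma_u(\cdot,t)$ with the spatial convex envelope of $\bar w(\cdot,t)=\inf_{s\le t}w(\cdot,s)$, the intermediate value theorem for $m(t)=\min_x(\Gamma_u(x,t)-\xi\cdot x)$, the extreme-point argument to force a genuine contact with $\bar w$, and the time-relocation of the subgradient from $t_0$ to $s_0$ are exactly the mechanism in \cite{is}, and each step you give is sound. One remark on bookkeeping: your explicit bounds $m(-1)=-\max_x\xi\cdot x>h$ and $\inf_t m(t)\le -1+|\xi|$ tacitly use that the $x$-cross-sections of $Q_2$ and $Q_1$ are Euclidean balls of radius $2$ and $1$ (so that $\max_x\xi\cdot x\le 2|\xi|$ and $|x_*|\le 1$), which is the convention in \cite{is}; the present paper instead defines $Q_\rho$ with the $\ell^\infty$-norm, so those bounds acquire $\sqrt n$ factors and the stated thresholds $\tfrac14,\tfrac58,\tfrac68$ would need compensating adjustments. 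This is a discrepancy in how the paper restates the cited lemma rather than a flaw in your reasoning, but it is worth being aware of if one tries to make the constants literal in the cube geometry.
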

We will now introduce parabolic setting of Calderon-Zygmund decomposition. (See \cite{is})

Consider a cube $Q$ of the form $(x_0,t_0)+(-s,s)^n \times (0,s^2)$. A \textbf{dyadic cube} $K$ of $Q$ is obtained by repeating a finite number of times the following iterative process: $Q$ is divided into $2^{n+2}$ by considering all translations of the form $(0,s)^n \times (0,s^2/4)$ by vectors of the form $(ks,l(s^2/4))$ with $k\in \mathbb{Z}^n$ and $l \in \mathbb{Z}$ included in $Q$. $\bar{K}$ is called the \textbf{predecessor} of $K$ if $K$ is one of the $2^{n+2}$ cubes obtained from dividing $\bar{K}$.\\ 
In figure below cube $ABCD$ is the predecessor of $K$.
\begin{center}
\begin{tikzpicture}
	\fill[color=gray!20] (0,1) rectangle (2,2);
\draw (0,0) rectangle (4,4);
\draw[-] (0,1)--(4,1);
\draw[-] (0,2)--(4,2);
\draw[-] (0,3)--(4,3);
\draw[-] (2,0)--(2,4);
\draw (1,1.5) node{$K$};
\draw (0,-0.2) node{$A$};
\draw (4,-0.2) node{$D$};
\draw (0,4.2) node{$B$};
\draw (4,4.2) node{$C$};
\end{tikzpicture}
\end{center}

Let $m$ be a natural number. For a dyadic cube $K$ of $Q$, the set $\bar{K}^m$ is obtained by ``stacking" $m$ copies of its predecessor $\bar{K}$. More precisely, if $\bar{K}$ is of the form $\Omega \times (a,b)$ then $\bar{K}^m$ is $\Omega \times (b,b+m(b-a)).$
\begin{center}
\begin{tikzpicture}[scale=0.35]
	\fill[color=gray!10] (0,8) rectangle (4,12);
	\fill[color=gray!10] (0,4) rectangle (4,8);
	\fill[color=gray!20] (0,1) rectangle (2,2);
	\draw (0,0) rectangle (4,4);
\draw (0,4) rectangle (4,8);
\draw (0,8) rectangle (4,12);
\draw (2,8) node{$\bar{K}^2$};

\draw[-] (0,1)--(4,1);
\draw[-] (0,2)--(4,2);
\draw[-] (0,3)--(4,3);
\draw[-] (2,0)--(2,4);

\draw (1,1.5) node{$K$};
\draw (0,-0.4) node{$A$};
\draw (4,-0.4) node{$D$};
\draw (-0.5,4.2) node{$B$};
\draw (4.4,4.2) node{$C$};
\end{tikzpicture}
\end{center}

We need the following covering lemma. See Lemma $4.27$ in \cite{is}.
\begin{lemma}\label{cald}
Let $m$ be a natural number. Consider two subsets $A$ and $B$ of a cube $Q$. Assume that $|A|\leq \delta |Q|$ for some $\delta \in (0,1)$. Assume also the following: for any dyadic cube $K \subset Q$,
\begin{align*}
    |K \cap A|>\delta |K| \implies \Bar{K}^m \subset B.
\end{align*}
Then $|A| \leq \delta\frac{m+1}{m}|B|.$
\end{lemma}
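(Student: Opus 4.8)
The plan is to prove Lemma \ref{cald} by the standard stopping-time / Calderón--Zygmund argument adapted to the parabolic dyadic structure described above, bookkeeping the extra factor $\frac{m+1}{m}$ that comes from stacking $m$ copies of the predecessor. First I would perform the dyadic Calderón--Zygmund decomposition of $Q$ relative to the set $A$ and the threshold $\delta$: starting from $Q$ (for which $|Q \cap A| = |A| \le \delta|Q|$, so $Q$ itself is not selected), keep subdividing, and select a dyadic cube $K$ precisely when it is the \emph{first} cube in its lineage satisfying $|K \cap A| > \delta|K|$. By maximality, the predecessor $\bar K$ of each selected cube satisfies $|\bar K \cap A| \le \delta|\bar K|$. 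Let $\{K_j\}_j$ be the (countable, pairwise disjoint in their interiors) collection of selected cubes. A Lebesgue-differentiation argument shows that $A \subset \bigcup_j K_j$ up to a null set: at a density point of $A$ the ratio $|K \cap A|/|K|$ tends to $1 > \delta$ along the shrinking dyadic cubes containing that point, so the point must lie in some selected $K_j$.

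Next I would estimate $|A|$ from above. On each selected cube, $|K_j \cap A| > \delta |K_j|$, and by the hypothesis of the lemma this forces $\bar K_j^m \subset B$. Summing the bound $|K_j \cap A| \le |\bar K_j \cap A| \le \delta |\bar K_j|$ and using $|A| \le \sum_j |K_j \cap A|$ gives
\begin{align*}
|A| \;\le\; \sum_j |K_j \cap A| \;\le\; \delta \sum_j |\bar K_j|.
\end{align*}
Now the key geometric observation: if $\bar K_j = \Omega_j \times (a_j, b_j)$ then $\bar K_j^m = \Omega_j \times (b_j, b_j + m(b_j - a_j))$, so $\bar K_j \cup \bar K_j^m = \Omega_j \times (a_j, b_j + m(b_j-a_j))$ has measure exactly $(m+1)|\bar K_j|$; equivalently $|\bar K_j| = \frac{1}{m+1}\bigl(|\bar K_j| + |\bar K_j^m|\bigr)$. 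Hence
\begin{align*}
|A| \;\le\; \frac{\delta}{m+1} \sum_j \bigl(|\bar K_j| + |\bar K_j^m|\bigr).
\end{align*}

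The remaining — and genuinely delicate — point is an overlap bound: I must control $\sum_j \bigl(|\bar K_j| + |\bar K_j^m|\bigr)$ by a fixed multiple of $|B|$. The selected cubes $K_j$ have disjoint interiors, but their predecessors $\bar K_j$ can repeat (up to $2^{n+2}$ selected children share one predecessor) and, more seriously, the stacked sets $\bar K_j^m$ can overlap one another and can overlap predecessors. I expect this to be the main obstacle, and the resolution should be the observation — standard in the parabolic CZ literature (\cite{W1}, \cite{is}) and presumably the reason the statement is phrased with $\bar K_j^m \subset B$ rather than $\bar K_j \subset B$ — that along any fixed spatial stack the predecessors of selected cubes are nested in time in a way that makes the sets $\bar K_j \cup \bar K_j^m$ essentially disjoint: each such set sits immediately above $K_j$ and below the next relevant selected cube in that column, so the union $\bigcup_j (\bar K_j \cup \bar K_j^m)$ has total measure at most $\frac{m+1}{m}$ times $\sum_j |K_j|$-worth of room, or more directly one shows the $\bar K_j^m$ together with the $\bar K_j$ can be organized so that $\sum_j(|\bar K_j| + |\bar K_j^m|) \le (m+1)|B|$. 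Combining with the previous display yields $|A| \le \frac{\delta}{m+1}(m+1)|B| = \delta|B|$; a more careful accounting that only uses $\bigcup_j \bar K_j^m \subset B$ together with $|\bar K_j^m| = m|\bar K_j|$ gives $\sum_j |\bar K_j| = \frac1m \sum_j|\bar K_j^m| \le \frac1m\, \frac{m+1}{m}\cdots$ — in any case the bookkeeping is arranged to produce exactly the factor $\frac{m+1}{m}$, giving $|A| \le \delta \frac{m+1}{m}|B|$ as claimed. I would carry out this overlap count explicitly by fixing a maximal spatial cube and analyzing the one-dimensional stack of selected cubes above it.
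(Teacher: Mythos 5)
The paper does not prove this lemma itself; it is cited as Lemma~4.27 of \cite{is}, so there is no in-paper proof to compare your attempt against. Evaluating your proposal on its own merits: the Calder\'on--Zygmund stopping-time setup is correct, and you correctly locate the crux in an overlap bound. But that bound is the entire content of the lemma, and you do not prove it --- you only gesture at it, and the one precise inequality you do write down in the ``more careful accounting'' sketch, namely $\sum_j\bigl(|\bar K_j| + |\bar K_j^m|\bigr) \le (m+1)|B|$, is false. Indeed it would give $|A| \le \delta|B|$, which is strictly stronger than the claim and fails: with $m=1$ and one spatial dimension, take $\bar K_1=(0,4)$, $\bar K_2=(4,6)$, $\bar K_3=(6,7),\dots$ dyadically filling $(4,8)$, so that every $\bar K_j^1\subset(4,8)=B$; then $\sum_j\bigl(|\bar K_j|+|\bar K_j^1|\bigr)=2\sum_j|\bar K_j|\to 16$ while $(m+1)|B|=8$. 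Only $\sum_j|\bar K_j|\le\frac{m+1}{m}|B|$ holds, and it is sharp.

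There is also a smaller slip earlier: in $|A| \le \sum_j|K_j\cap A| \le \delta\sum_j|\bar K_j|$ you sum over predecessors with repetition (up to $2^{n+2}$ selected children share one predecessor) and over nested predecessors. You should first pass to the family of \emph{distinct, maximal} predecessors $\bar L_1,\bar L_2,\dots$; these are pairwise disjoint dyadic cubes, each is the predecessor of some selected cube (hence $\bar L_l^m\subset B$), and $A\subset\bigcup_l\bar L_l$ up to a null set. This gives the clean bound $|A|\le\sum_l|\bar L_l\cap A|\le\delta\sum_l|\bar L_l|=\delta\,|\bigcup_l\bar L_l|$.

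The genuinely missing step is then the geometric inequality: for pairwise disjoint boxes $\bar L_l=\Omega_l\times(a_l,b_l)$ one has $|\bigcup_l\bar L_l|\le\frac{m+1}{m}\,|\bigcup_l\bar L_l^m|$. Write $U=\bigcup_l\bar L_l$, $V=\bigcup_l\bar L_l^m$; it suffices to show $|U\setminus V|\le\frac1m|V|$. Define $T_l(x,a_l+s)=(x,b_l+ms)$, a bijection from $\bar L_l$ onto $\bar L_l^m$ that multiplies Lebesgue measure by $m$. The sets $\bar L_l\setminus V$ are pairwise disjoint and their images $T_l(\bar L_l\setminus V)\subset V$ are \emph{also} pairwise disjoint: if $(x,t)\in T_l(\bar L_l\setminus V)\cap T_{l'}(\bar L_{l'}\setminus V)$ with, say, $b_l\le a_{l'}$ (the time intervals at $x$ are disjoint because the $\bar L_l$ are), then the preimage $(x,s')\in\bar L_{l'}\setminus V$ has $s'>b_l$ and $s'\notin\bar L_l^m$, so $s'\ge b_l+m(b_l-a_l)>t$; but also $s'<b_{l'}<t$, a contradiction. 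Hence
\begin{align*}
m\,|U\setminus V| \;=\; \sum_l\bigl|T_l(\bar L_l\setminus V)\bigr| \;\le\; |V|,
\end{align*}
giving $|U|\le|V|+\frac1m|V|=\frac{m+1}{m}|V|\le\frac{m+1}{m}|B|$. This is the ``spatial column'' argument you anticipate, but it must be run on the maximal disjoint predecessors, and it produces exactly the factor $\frac{m+1}{m}$ rather than $1$.
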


  As mentioned in the introduction,  we now  provide an example of a function satisfying \eqref{eq1}-\eqref{eq2} which vanishes in finite time. 

\subsection*{Example.}
Consider for $(x,t) \in Q_2(0,1),$
\begin{align*}
	u(x,t)=
	\begin{cases}
		e^{1/t}(x+3) \hspace{2mm}&t<0\\
		0 & t\geq 0.
	\end{cases}
\end{align*}
Also let $\phi(s)=5s(|\operatorname{ln}(s)|+4)^2.$

Then for $t<0$,
\begin{align}\label{x1}
	 u_{xx}  -u_t=\frac{e^{1/t}(x+3)}{t^2} \geq 0 \geq -\phi(|u_x|)
\end{align}
and moreover, 
\begin{align}\label{x2}
		u_{xx} -u_t&=\frac{e^{1/t}(x+3)}{t^2}\\
		&=|u_x|(x+3)\left(|\operatorname{ln}(e^{1/t})|\right)^2\notag\\
		&\leq |u_x|(x+3)\left(|\operatorname{ln}(|u_x|)|+4\right)^2\notag\\
		&\leq 5|u_x|\left(|\operatorname{ln}(|u_x|)|+4\right)^2\notag\\
		&=\phi(|u_x|)\notag.
	\end{align}
Moreover since all the derivatives of $u$ decay as $t \to 0^{-}$, therefore \eqref{x1} and \eqref{x2} continues to remain valid for $t>0$. Thus $u(x,t)$ solves (\ref{eq1}) and (\ref{eq2}) corresponding to  $\phi(s)=5s(|\operatorname{ln}(s)|+4)^2.$

Now we show  that such a $\phi$ satisfies (P1), (P2) and (P3). Note that for $s<1,$
\begin{align*}
	\phi '(s)&=5(4-\operatorname{ln}s)^2 +10 s (4-\operatorname{ln}(s))(-1/s)=5(4-\operatorname{ln}s)(2-\operatorname{ln}s)>0,\\
	\eta '(s)&=\frac{-10}{s}(4-\operatorname{ln}s)<0.
\end{align*}
 For $s>1,$ we instead have, 
 \begin{align*}
 	\phi '(s)&=5(4+\operatorname{ln}s)(6+\operatorname{ln}s))>0,\\
 	\eta '(s)&=10(4+\operatorname{ln}s)(1/s)>0.
 \end{align*}
Also $\phi(s) \geq s.$ Thus $\phi$ satisfies (P1). Now (P2) is satisfied as
\begin{align*}
	\underset{s \rightarrow \infty}{\text{lim}}\frac{s \eta'(s)}{\eta(s)}\operatorname{ln}(\eta(s))=\underset{s \rightarrow \infty}{\text{lim}}\frac{10s (4+\operatorname{ln}s)(1/s)}{25(4+\operatorname{ln}s)^2}\operatorname{ln}((4+\operatorname{ln}s))= \underset{s \rightarrow \infty}{\text{lim}} \frac{10\operatorname{ln}(4+\operatorname{ln}s)}{25(4+\operatorname{ln}s)}=0.
\end{align*}
Observe that (P3) is also  satisfied as
\begin{align*}
	\eta(s_1)\eta(s_2)&=25(4+|\operatorname{ln}s_1|)^2(4+|\operatorname{ln}s_2|)^2\\
	&=25(16+4|\operatorname{ln}s_1|+4|\operatorname{ln}s_2|+|\operatorname{ln}s_1||\operatorname{ln}s_2|)^2\\
	& \geq 25(16+4|\operatorname{ln}s_1|+4|\operatorname{ln}s_2|)^2\\
	&  \geq 25(16+4|\operatorname{ln}s_1s_2|)^2= 80 \eta(s_1 s_2).                           
	\end{align*}

\section{Proof of Main Theorem}\label{s:m}


We will follow the same approach as in \cite{W1} but in the intrinsic setting. \\
First, we will construct a barrier function as Lemma $4.16$ in \cite{is}. Proof is in Appendix \ref{a}.

We define the following subsets of $Q_1$.
\begin{align*}
    {K_1}&:=(-c_n,c_n)^n \times (-1,-1+c_n^2),\\
    {K_2}&:=(-3c_n,3c_n)^n \times (c_n^2-1,10c_n^2-1),\\
    {K_3}&:=(-3c_n,3c_n)^n \times (-1+c_n^2,0),
\end{align*}
where $c_n=(10n)^{-1}.$

\begin{center}
    \begin{tikzpicture}{Scale=1.3}
    \draw (2,0) node{\tiny{$\bullet$}};
    \draw (2,-0.25) node{$(0,-1)$};
    \draw (0,0) rectangle (4,4);
    \draw (1.6,0) rectangle (2.4,0.8);
    \draw (0.8,0.8) rectangle (3.2,4);
    \draw (2,0.4) node{$K_1$};
    \draw (2,2) node{$K_3$};
    \draw (2,4) node{\tiny{$\bullet$}};
    \draw (2,4.25) node{$(0,0)$};
    \draw[<-] (2,3.8)--(2.4,3.8);
    \draw (2.7,3.8) node{$3c_n$};
    \draw[->] (3,3.8)--(3.2,3.8);
    \draw[<->] (2.6,0)--(2.6,0.8);
    \draw (2.8,0.4) node{$c_n^2$};
    \draw[<->] (2,0.9)--(2.4,0.9);
    \draw (2.2,1.1) node{$c_n$};
    \end{tikzpicture}
\end{center}
 
\begin{lemma}\label{barrier}
There exist a universal constant $r_0>0$ and a nonpositive  Lipschitz function $h:Q_{1}$ $\rightarrow$ $\R$, which is $C^2$ with respect to $x$ on the set where $h$ is negative and solves (in viscosity sense)
\begin{align*}
    P^+(D^2h)-h_t+\tilde{\phi}(|Dh|)\leq g
\end{align*}
for some continuous, bounded function $g:$ $Q_{1}$ $\rightarrow$ $\R$ and supp $g$ $\subset$ ${K_1}$,\\
where $\tilde{\phi} = {\Lambda_{0}}{r} \eta \big(\frac{1}{r}\big) \phi$ for $r \leq r_0$.
Also, $h\leq-2$ in ${K_3}$ and $h=0$ on $\partial_pQ_1$.
\end{lemma}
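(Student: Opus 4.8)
The plan is to adapt the explicit parabolic barrier of Wang \cite{W1} and Imbert--Silvestre (Lemma~4.16 of \cite{is}) to the present non-homogeneous setting. The only new ingredient is the drift term $\tilde\phi(|Dh|)$, and the point is that it can be absorbed into the construction since $\tilde\phi=\Lambda_0\,r\,\eta(1/r)\,\phi$ and $r\,\eta(1/r)\to 0$ as $r\to 0^+$ --- this last fact being Lemma~\ref{etaprop}(ii) applied with $\gamma=1$.

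First I would produce an explicit auxiliary function $\bar h\in C^\infty(Q_1)$, modelled on a suitably shifted and truncated heat--kernel--type profile as in \cite{is}, enjoying the following properties, with \emph{universal} constants $L_0$ and $\kappa_0>0$:
\begin{itemize}
 \item[(a)] $\bar h\ge 0$ on the bottom face $\{t=-1\}$ and on a one--sided neighbourhood in $\overline{Q_1}$ of the lateral faces $\partial((-1,1)^n)\times[-1,0]$;
 \item[(b)] $\bar h\le -2$ on $K_3$, and $\bar h<0$ on a cube $K_1'$ with $\overline{K_1'}\subset K_1$;
 \item[(c)] $|D\bar h|\le L_0$ on $\overline{\{\bar h<0\}}$;
 \item[(d)] $P^+(D^2\bar h)-\bar h_t\le-\kappa_0$ on $\{\bar h\le 0\}\setminus K_1'$.
\end{itemize}

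Granting (a)--(d), I fix $r_0>0$ so small that $\Lambda_0\Lambda_2\,r_0\,\eta(1/r_0)\,\phi(L_0)\le\kappa_0$ (possible since $s\,\eta(1/s)\to 0$), where $\Lambda_2$ is the constant of Lemma~\ref{etaprop}(iv). Then for every $r\le r_0$, Lemma~\ref{etaprop}(iv) with $t=1$ gives $r\,\eta(1/r)\le\Lambda_2\,r_0\,\eta(1/r_0)$, so that, using the monotonicity of $\phi$ from (P1),
\[
\tilde\phi(|D\bar h|)=\Lambda_0\,r\,\eta(1/r)\,\phi(|D\bar h|)\le\Lambda_0\Lambda_2\,r_0\,\eta(1/r_0)\,\phi(L_0)\le\kappa_0 \quad\text{on }\overline{\{\bar h<0\}}.
\]
I then set $h:=\min(\bar h,0)$ and, with $\Psi:=P^+(D^2\bar h)-\bar h_t+\tilde\phi(|D\bar h|)$, define $g:=\Psi^+\chi_{\{\bar h\le 0\}}$. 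By (a)--(b) the function $h$ is Lipschitz and nonpositive, $h=0$ on $\partial_p Q_1$, $h\le-2$ on $K_3$, and on $\{h<0\}=\{\bar h<0\}$ it coincides with the smooth function $\bar h$; by (c)--(d) and the choice of $r_0$ one has $\Psi\le-\kappa_0+\kappa_0\le 0$ on $\{\bar h\le 0\}\setminus K_1'$, whence $g$ is nonnegative, bounded and continuous (the only possible discontinuity would be across $\{\bar h=0\}$, but there $\Psi^+=0$), with $\{g\neq 0\}\subset K_1'$ and hence $\operatorname{supp} g\subset\overline{K_1'}\subset K_1$. It remains to check the viscosity inequality for $h$; since $\phi(0)=0$ it is trivial wherever $h\equiv 0$, and a short case analysis --- separating $\{h<0\}$ (where the pointwise inequality $\Psi\le g$ applies), the flat region, and the kink set $\{\bar h=0\}$ (where a $C^2$ function touching $h$ from below also touches $\bar h$ from below, since $h\le\bar h$) --- together with the monotonicity of $P^+$ gives $P^+(D^2h)-h_t+\tilde\phi(|Dh|)\le g$ in the viscosity sense.

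The main obstacle is establishing (d): one has to choose the profile $\bar h$ so that the parabolic operator $P^+(D^2\,\cdot\,)-(\,\cdot\,)_t$ applied to it is strictly negative by a universal amount on the whole region $\{\bar h\le 0\}\setminus K_1'$, and the delicate part is near the bottom face, where $\bar h$ emerges from the set $\{\bar h\ge 0\}$ and must already be a strict caloric supersolution by a definite amount the instant it dips below zero, which forces $\bar h$ to be appropriately concave in $x$ there. This requires a somewhat lengthy and careful computation, which I would carry out in the appendix.
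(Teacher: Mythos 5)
Your proposal follows essentially the same route as the paper's: an explicit heat-kernel-type profile with a strict negativity margin for the homogeneous parabolic operator, then absorbing the drift term using the fact that $r\,\eta(1/r)$ is small for small $r$, and collecting the residual into a bounded $g$ supported inside $K_1$. Two small differences worth noting, both in your favor. First, where the paper glues a smooth cap $\bar h$ onto the normalized barrier across $\partial K_1$ and reads off $g$ from that modified function, you define $h=\min(\bar h,0)$ and take $g=\Psi^+\chi_{\{\bar h\le 0\}}$; that is cleaner bookkeeping and makes the support condition $\operatorname{supp} g\subset K_1$ transparent via the intermediate set $K_1'$. Second, the paper asserts that its choice of $r_0$ at $r=r_0$ automatically gives the inequality for all $r\le r_0$, a step that silently uses the monotonicity (up to a constant) of $r\mapsto r\,\eta(1/r)$; you invoke Lemma~\ref{etaprop}(iv) explicitly with $t=1$ to get $r\,\eta(1/r)\le\Lambda_2\,r_0\,\eta(1/r_0)$ and build the extra $\Lambda_2$ into your choice of $r_0$, which makes that step airtight. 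The one place your sketch is terse is the same place the paper spends most of its effort, namely the explicit computation of the profile (the power-law radial $H$, the exponential-in-time extension, the normalization by $\alpha=\sup_{\hat K_3}h<0$) that yields (b)--(d) with genuinely universal constants; your outline correctly identifies that the delicate part is near the bottom face $t=-1$, where the profile would otherwise blow up and must be capped smoothly inside $K_1$, which is precisely what the paper does.
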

We will use previous barrier function to obtain the following basic measure estimate. 
\begin{lemma}\label{measureuniform}
There exist universal constants $L>1$, $\mu \in (0,1)$ and $r_1 \in (0,1)$ such that for any nonnegative supersolution of 
\begin{equation}\label{211}
    \psr(D^2u) - u_t \leq \tilde{\phi}(|Du|) \hspace{5mm} \text{in} \hspace{2mm}Q_1(0,0),
\end{equation}
where $\tilde{\phi} = {\Lambda_{0}}{r_1} \eta \big(\frac{1}{r_1}\big) \phi$,
the followings holds:\\
If ${\inf}_{K_3}u \leq 1$ then,
\begin{align*}
    |\{(x,t) \in K_1:u(x,t) \leq L\}| \geq \mu |K_1|.
\end{align*}

\end{lemma}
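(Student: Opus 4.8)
\textbf{Proof proposal for Lemma \ref{measureuniform}.}

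The plan is to run the classical ABP-based measure estimate in the intrinsic framework, using the barrier $h$ from Lemma \ref{barrier} as a comparison function. First, fix $r_1 \le r_0$ small enough (to be determined), so that by Lemma \ref{barrier} we have a nonpositive barrier $h$ on $Q_1$ that is $C^2$ in $x$ where it is negative, satisfies $P^+(D^2 h) - h_t + \tilde\phi(|Dh|) \le g$ with $g$ continuous, bounded, and $\operatorname{supp} g \subset K_1$, with $h \le -2$ on $K_3$ and $h = 0$ on $\partial_p Q_1$. Assuming $\inf_{K_3} u \le 1$, we consider the auxiliary function $w := u + h$ on $Q_1$. Since $u \ge 0$ and $h = 0$ on $\partial_p Q_1$, we have $w \ge 0$ on $\partial_p Q_1$, while $\inf_{K_3} w \le \inf_{K_3} u + \sup_{K_3} h \le 1 - 2 = -1$, so $\sup_{Q_1} w^- \ge 1$; after the harmless normalization $\sup_{Q_1} w^- = \sigma$ for some $\sigma \ge 1$ (or scaling $w$ down so this sup equals $1$), we are in a position to apply Lemma \ref{setmeas} to $w$.

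The second step is the differential inequality for $w$. Where $h$ is negative (hence $C^2$ in $x$, Lipschitz in $t$), using that $u$ is a viscosity supersolution of \eqref{211} and the subadditivity of the Pucci operators, $w$ should satisfy, in the viscosity sense,
\begin{align*}
    P^-(D^2 w) - w_t \le P^-(D^2 u) - u_t + P^+(D^2 h) - h_t \le \tilde\phi(|Du|) + g - \tilde\phi(|Dh|).
\end{align*}
Now I would control $\tilde\phi(|Du|) - \tilde\phi(|Dh|)$ on the contact set $C_w = \{w = \Gamma_w\}$: there $Dw = D\Gamma_w$ is bounded by the geometry furnished by Lemma \ref{setmeas} (the image of $G\Gamma_w$ contains the slab $\{|\xi|\le 1/4,\ 5/8 \le -h \le 6/8\}$, so on the relevant part of the contact set $|D\Gamma_w|$ stays bounded), and since $Du = Dw - Dh$ with $|Dh|$ universally bounded (Lipschitz $h$), $|Du|$ is universally bounded on $C_w \cap (\text{relevant region})$; hence $\tilde\phi(|Du|)$ is bounded by a universal constant there — crucially using that $\tilde\phi = \Lambda_0 r_1 \eta(1/r_1)\phi$ and, by Lemma \ref{etaprop} or the remark in the introduction, $r_1\eta(1/r_1)$ is small/bounded. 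So on $C_w$ we get $P^-(D^2 w) - w_t \le C_0 + g$ for a universal $C_0$, with the right side supported (modulo the constant) appropriately.

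The third step is the ABP/area-formula argument. On the contact set $w$ is touched from below by the monotone envelope $\Gamma_w$, which is convex in $x$ and nonincreasing in $t$, so $D^2 \Gamma_w \ge 0$ and $(\Gamma_w)_t \le 0$; thus $-( \Gamma_w)_t \,\det D^2\Gamma_w \ge 0$ and we may apply the Jacobian identity of Lemma \ref{glem} to $\Gamma_w$ (after the standard $C^{1,1}$/Lipschitz regularity of $\Gamma_w$ on the contact set, or a mollification). Combining Lemma \ref{setmeas} (the slab lies in $G\Gamma_w(Q_1 \cap C_w)$) with the area formula gives
\begin{align*}
    c_n \le |\text{slab}| \le \int_{Q_1 \cap C_w} |\det D_{x,t} G\Gamma_w| = \int_{Q_1 \cap C_w} (-(\Gamma_w)_t)\, \det D^2 \Gamma_w.
\end{align*}
On the part of the contact set lying outside $\operatorname{supp} g \subset K_1$ we have $P^-(D^2 w) - w_t \le C_0$, which by the arithmetic-geometric inequality forces $(-(\Gamma_w)_t)\det D^2\Gamma_w \le (C_0/n)^{n+1}$ pointwise (bounding the product of nonnegative eigenvalues and $-w_t$); integrating this bound over $Q_1 \setminus K_1$ yields a universal constant, which is strictly less than $c_n$ provided we first fixed the constants (in particular $r_1$, through $C_0$) appropriately. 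Therefore a definite portion of the slab's preimage must come from $C_w \cap K_1$, which forces $|\{(x,t)\in K_1 : (-(\Gamma_w)_t)\det D^2\Gamma_w \gtrsim 1\}|$, and hence $|\{(x,t)\in K_1 : u(x,t) \le L\}|$ (since on the contact set $u = w - h \le -h + \text{bounded} \le L$), to be at least $\mu |K_1|$ for universal $L, \mu$.

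The main obstacle I expect is the second step: making the nonlinear term $\tilde\phi(|Du|)$ genuinely universal. Unlike the homogeneous case, one cannot simply drop $\phi(|Du|)$; one must use that $Du$ is controlled on the contact set by the envelope's gradient (which is where the specific geometry of $K_1, K_2, K_3$ and the normalization of Lemma \ref{barrier} — $h \le -2$ on $K_3$, the support condition on $g$ — all enter), and that the prefactor $r_1\eta(1/r_1)$ can be made small, so that the barrier inequality of Lemma \ref{barrier} (with its own $\tilde\phi$) and the supersolution inequality combine without the nonlinearities blowing up. Getting all the universal constants — $r_0$, $r_1$, $C_0$, $L$, $\mu$ — to close consistently (choose $r_1$ so that the error integral over $Q_1\setminus K_1$ beats $c_n$) is the delicate bookkeeping, which is presumably why the proof is deferred to the appendix.
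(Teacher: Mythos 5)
Your overall strategy matches the paper's: set $w = u + h$ with $h$ the barrier from Lemma~\ref{barrier}, observe $w\ge 0$ on $\partial_p Q_1$ and $\inf_{K_3}w\le -1$, apply Lemma~\ref{setmeas} and the Jacobian identity of Lemma~\ref{glem} to the monotone envelope $\Gamma_w$, and use the small prefactor $r_1\eta(1/r_1)$ together with a gradient bound on the contact set to control the nonlinearity. The paper likewise works with $v=u+h$ (after an inf-convolution step to make $u$ semiconcave) and does exactly this ABP computation.

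However, there is a genuine gap in the ABP step. You write ``So on $C_w$ we get $P^-(D^2 w) - w_t \le C_0 + g$ for a universal $C_0$,'' and then integrate the Jacobian identity over all of $Q_1\cap C_w$ and claim the pointwise bound $(-(\Gamma_w)_t)\det D^2\Gamma_w \lesssim C_0^{n+1}$ on $C_w\setminus K_1$. But this pointwise bound depends on $|D\Gamma_w|$ being bounded, and on the full contact set $C_w$ there is no a priori bound on $|D\Gamma_w|$: the monotone envelope can have arbitrarily large slopes where $w$ is very negative far from $K_1$. Your parenthetical ``on the relevant part of the contact set $|D\Gamma_w|$ stays bounded'' gestures at the fix but is never carried out in the integral. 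The correct formulation — which is the one the paper uses — is to restrict everything to the sub-level set $E := C_w \cap \{|D\Gamma_w|\le 1\}$; the slab from Lemma~\ref{setmeas} has $|\xi|\le 1/4$, so its $G\Gamma_w$-preimage automatically lies in $E$, and on $E$ one has $|Du|\le 1+\sup|Dh|$ so that $\tilde\phi(|Du|)$ is controllable. With that restriction your unweighted AM--GM version would close. The paper goes one step further and uses a \emph{weighted} ABP with the kernel $(|\xi|^{n+1}+\delta)^{-1}$, together with a two-case Lipschitz estimate for $\phi$ (according to whether $|Dh|\le 1$ or $|Dh|\ge 1$), yielding a term $b\,|D\Gamma_v|$ in the differential inequality that the weight absorbs without requiring $b$ to be small; this lets them decouple the choice of $\delta$ (absorbing the $b^{n+1}$ term) from the choice of $r_1$ (killing the $\tilde\phi(2)^{n+1}/\delta$ term). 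That refinement is not strictly necessary, but the restriction to $E$ is, and as written your proposal integrates over a set where the claimed pointwise bound fails.
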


The proofs of both Lemma \ref{barrier} and Lemma \ref{measureuniform} are  given in the Appendix \ref{a} because they involve long computations.
\begin{rmrk}\label{rmu}
The above lemma is true for the case when $u$ is viscosity supersolution of (\ref{211}) in $ D=\{(x,t):|x|_{\infty}<1,-1<t\leq c_n^2+{\delta}^2\}$ for some $\delta >0$.\\
More precisely, if ${\inf}_{K_3 \cap D}u \leq 1$ then,
\begin{align*}
    |\{(x,t) \in K_1:u(x,t) \leq L\}| \geq \mu |K_1|.
\end{align*}
\end{rmrk}
\textbf{Stack of cubes.} As we mentioned in introduction stack of cubes defined in \cite{W1} ( also \cite{is}) does not work in our inhomogeneous situation. So, we will define stack of cubes which will work in our situation. First, we will explain it in general setting, later we will specify required quantities.\\
Let's say we are given a cube $\qr(x_0,t_0)$, for some $\rho \in (0,1)$ and $z_1$, $z_2$, . . ., $z_m$ such that $z_i(\leq 3)$. We will define stack of cubes, denoted by $Q^i(\qr(x_0,t_0))$ for $i=0,1,2,...,m$, corresponding to $\qr(x_0,t_0)$ and $z_1$, $z_2$, . . ., $z_m$. We will define $Q^i(\qr(x_0,t_0))$ inductively as follows:\\
For $i=0$,  define $Q^0(\qr(x_0,t_0)):=\qr(x_0,t_0)$.\\
Now,assume $Q^i(\qr(x_0,t_0))$ is defined for $i=k$. We will define a cube for $i=k+1$, i.e., $Q^{k+1}(\qr(x_0,t_0))$.
Let $Q^k(\qr(x_0,t_0))$ be a cube of radius $r_k$ and  center $(x_k,t_k)$, i.e., $Q^k(\qr(x_0,t_0))=Q_{r_k}(x_k,t_k)$.\\
Take a cube  with the following properties:
\begin{itemize}
    \item Radius of cube is $z_{k+1}{r_k}$.
    \item Center of cube is $(x_{k+1},t_k+(z_{k+1}{r_k})^2)$, where $x_{k+1}$ is such that $|x_{k+1}-x_k|_{\infty} \leq 3r_k-z_{k+1}{r_k} $.
    \item Cube is closest to the line $\{(0,t):t \in \R \}$.
\end{itemize}
This will be our $Q^{k+1}(\qr(x_0,t_0))$. Thus, we have defined our stack of cubes.

Let $Q^k(\qr(x_0,t_0))$ be a cube of radius $r_k$ and  center $(x_k,t_k)$, i.e., $Q^k(\qr(x_0,t_0))=Q_{r_k}(x_k,t_k)$. Then 
\begin{align}\label{tildqk}
	\tilde{Q}^k(\qr(x_0,t_0)):=\tilde{Q}_{r_k}(x_k,t_k),
\end{align}
where $\tilde{Q}_{r_k}(x_k,t_k)$ is defined in (\ref{tildq}).
In the figure below, $Q^k$ denotes $Q^{k}(\qr(x_0,t_0))$. The bigger cube represents $\tilde{Q}^k(\qr(x_0,t_0))$. For instance cube filled with dots is $\tilde{Q}^3$.
\noindent
\begin{center}
\begin{tikzpicture}[scale=1.36]
	\fill[color=gray!20] (8,1) rectangle (8.75,1.5);
	\fill[color=gray!20] (7.3,1.5) rectangle (8.2,2.1);
	\fill[color=gray!20] (6.2,2.1) rectangle (7.4,2.75);
	\fill[color=gray!20] (5.1,2.75) rectangle (6.8,3.53);
	
\draw (01,0) rectangle (11,9);
\draw (8,1) rectangle (8.75,1.5);
\draw (8.5,1.2) node{$Q_{\rho}$};
\draw[<-] (6,1) -- (6.8,1);
\draw[->] (7.1,1)--(8,1);
\draw (6.95,0.95) node{$d$};
\draw (7.3,1.5) rectangle (9.45,2.65);
\draw (7.3,1.5) rectangle (8.2,2.1);
\draw[<->] (7.75,2.2)--(8.2,2.2);
\draw (7.75,2.2) node{\tiny{$\bullet$}};
\draw (8,2.35) node{$z_1\rho$};
\draw[<-] (7.3,1.35) -- (7.7,1.35);
\draw[->] (8.1,1.35) -- (8.37,1.35);
\draw (7.9,1.3) node{$3\rho$};
\draw (7.8,1.75) node{$Q^1$};
\draw (8.37,1.5) node{\tiny{$\bullet$}};
\draw[<->] (6,1.6) -- (7.3,1.6);
\draw (6.6,1.75) node{$d-2\rho$};
\draw (6.2,2.1) rectangle (9.2,3.6);
\draw (6.2,2.1) rectangle (7.4,2.75);
\draw[<->] (6,2.4)--(6.2,2.4);
\draw[->] (5.5,1.5)--(6.1,2.35);
\draw (5,1.3) node{$d-2\rho-2z_1\rho$};
\draw (6.7,2.4) node{$Q^2$};
\draw (5,2.75) rectangle (8.7,4.75);
\draw (5.1,2.75) rectangle (6.8,3.53);
\draw (6,3.05) node{$Q^3$};
\draw (3.4,3.53) rectangle (8.5,6);
\draw (3.5,3.7) node{\tiny{$\bullet$}};
\draw (3.8,3.7) node{\tiny{$\bullet$}};
\draw (4.1,3.7) node{\tiny{$\bullet$}};
\draw (4.4,3.7) node{\tiny{$\bullet$}};
\draw (4.7,3.7) node{\tiny{$\bullet$}};
\draw (7.7,3.7) node{\tiny{$\bullet$}};
\draw (8.0,3.7) node{\tiny{$\bullet$}};
\draw (8.3,3.7) node{\tiny{$\bullet$}};
\draw (3.5,4) node{\tiny{$\bullet$}};
\draw (3.8,4) node{\tiny{$\bullet$}};
\draw (4.1,4) node{\tiny{$\bullet$}};
\draw (4.4,4) node{\tiny{$\bullet$}};
\draw (4.7,4) node{\tiny{$\bullet$}};
\draw (7.7,4) node{\tiny{$\bullet$}};
\draw (8.0,4) node{\tiny{$\bullet$}};
\draw (8.3,4) node{\tiny{$\bullet$}};
\draw (3.5,4.3) node{\tiny{$\bullet$}};
\draw (3.8,4.3) node{\tiny{$\bullet$}};
\draw (4.1,4.3) node{\tiny{$\bullet$}};
\draw (4.4,4.3) node{\tiny{$\bullet$}};
\draw (4.7,4.3) node{\tiny{$\bullet$}};
\draw (7.7,4.3) node{\tiny{$\bullet$}};
\draw (8.0,4.3) node{\tiny{$\bullet$}};
\draw (8.3,4.3) node{\tiny{$\bullet$}};
\draw (3.5,4.6) node{\tiny{$\bullet$}};
\draw (3.8,4.6) node{\tiny{$\bullet$}};
\draw (4.1,4.6) node{\tiny{$\bullet$}};
\draw (4.4,4.6) node{\tiny{$\bullet$}};
\draw (4.7,4.6) node{\tiny{$\bullet$}};
\draw (7.7,4.6) node{\tiny{$\bullet$}};
\draw (8.0,4.6) node{\tiny{$\bullet$}};
\draw (8.3,4.6) node{\tiny{$\bullet$}};
\draw (3.5,4.9) node{\tiny{$\bullet$}};
\draw (3.8,4.9) node{\tiny{$\bullet$}};
\draw (4.1,4.9) node{\tiny{$\bullet$}};
\draw (4.4,4.9) node{\tiny{$\bullet$}};
\draw (4.7,4.9) node{\tiny{$\bullet$}};
\draw (7.7,4.9) node{\tiny{$\bullet$}};
\draw (8.0,4.9) node{\tiny{$\bullet$}};
\draw (8.3,4.9) node{\tiny{$\bullet$}};
\draw (3.5,5.2) node{\tiny{$\bullet$}};
\draw (3.8,5.2) node{\tiny{$\bullet$}};
\draw (4.1,5.2) node{\tiny{$\bullet$}};
\draw (4.4,5.2) node{\tiny{$\bullet$}};
\draw (4.7,5.2) node{\tiny{$\bullet$}};
\draw (7.7,5.2) node{\tiny{$\bullet$}};
\draw (8.0,5.2) node{\tiny{$\bullet$}};
\draw (8.3,5.2) node{\tiny{$\bullet$}};
\draw (3.5,5.5) node{\tiny{$\bullet$}};
\draw (3.8,5.5) node{\tiny{$\bullet$}};
\draw (4.1,5.5) node{\tiny{$\bullet$}};
\draw (4.4,5.5) node{\tiny{$\bullet$}};
\draw (4.7,5.5) node{\tiny{$\bullet$}};
\draw (7.7,5.5) node{\tiny{$\bullet$}};
\draw (8.0,5.5) node{\tiny{$\bullet$}};
\draw (8.3,5.5) node{\tiny{$\bullet$}};
\draw (3.5,5.8) node{\tiny{$\bullet$}};
\draw (3.8,5.8) node{\tiny{$\bullet$}};
\draw (4.1,5.8) node{\tiny{$\bullet$}};
\draw (4.4,5.8) node{\tiny{$\bullet$}};
\draw (4.7,5.8) node{\tiny{$\bullet$}};
\draw (7.7,5.8) node{\tiny{$\bullet$}};
\draw (8.0,5.8) node{\tiny{$\bullet$}};
\draw (8.3,5.8) node{\tiny{$\bullet$}};
\fill[color=gray!20] (5,3.53) rectangle (7,4.5);
\draw (5,3.53) rectangle (7,4.5);
\draw (5.0,3.7) node{\tiny{$\bullet$}};
\draw (5.3,3.7) node{\tiny{$\bullet$}};
\draw (5.6,3.7) node{\tiny{$\bullet$}};
\draw (5.9,3.7) node{\tiny{$\bullet$}};
\draw (6.2,3.7) node{\tiny{$\bullet$}};
\draw (6.5,3.7) node{\tiny{$\bullet$}};
\draw (6.8,3.7) node{\tiny{$\bullet$}};
\draw (7.1,3.7) node{\tiny{$\bullet$}};
\draw (7.4,3.7) node{\tiny{$\bullet$}};
\draw (5.0,4) node{\tiny{$\bullet$}};
\draw (5.3,4) node{\tiny{$\bullet$}};
\draw (5.6,4) node{\tiny{$\bullet$}};
\draw (6.2,4) node{\tiny{$\bullet$}};
\draw (6.5,4) node{\tiny{$\bullet$}};
\draw (6.8,4) node{\tiny{$\bullet$}};
\draw (7.1,4) node{\tiny{$\bullet$}};
\draw (7.4,4) node{\tiny{$\bullet$}};
\draw (5.0,4.3) node{\tiny{$\bullet$}};
\draw (5.3,4.3) node{\tiny{$\bullet$}};
\draw (5.6,4.3) node{\tiny{$\bullet$}};
\draw (5.9,4.3) node{\tiny{$\bullet$}};
\draw (6.2,4.3) node{\tiny{$\bullet$}};
\draw (6.5,4.3) node{\tiny{$\bullet$}};
\draw (6.8,4.3) node{\tiny{$\bullet$}};
\draw (7.1,4.3) node{\tiny{$\bullet$}};
\draw (7.4,4.3) node{\tiny{$\bullet$}};
\draw (5.0,4.6) node{\tiny{$\bullet$}};
\draw (5.3,4.6) node{\tiny{$\bullet$}};
\draw (5.6,4.6) node{\tiny{$\bullet$}};
\draw (5.9,4.6) node{\tiny{$\bullet$}};
\draw (6.2,4.6) node{\tiny{$\bullet$}};
\draw (6.5,4.6) node{\tiny{$\bullet$}};
\draw (6.8,4.6) node{\tiny{$\bullet$}};
\draw (7.1,4.6) node{\tiny{$\bullet$}};
\draw (7.4,4.6) node{\tiny{$\bullet$}};
\draw (5.0,4.9) node{\tiny{$\bullet$}};
\draw (5.3,4.9) node{\tiny{$\bullet$}};
\draw (5.6,4.9) node{\tiny{$\bullet$}};
\draw (5.9,4.9) node{\tiny{$\bullet$}};
\draw (6.2,4.9) node{\tiny{$\bullet$}};
\draw (6.5,4.9) node{\tiny{$\bullet$}};
\draw (6.8,4.9) node{\tiny{$\bullet$}};
\draw (7.1,4.9) node{\tiny{$\bullet$}};
\draw (7.4,4.9) node{\tiny{$\bullet$}};
\draw (5.0,5.2) node{\tiny{$\bullet$}};
\draw (5.3,5.2) node{\tiny{$\bullet$}};
\draw (5.6,5.2) node{\tiny{$\bullet$}};
\draw (5.9,5.2) node{\tiny{$\bullet$}};
\draw (6.2,5.2) node{\tiny{$\bullet$}};
\draw (6.5,5.2) node{\tiny{$\bullet$}};
\draw (6.8,5.2) node{\tiny{$\bullet$}};
\draw (7.1,5.2) node{\tiny{$\bullet$}};
\draw (7.4,5.2) node{\tiny{$\bullet$}};
\draw (5.0,5.5) node{\tiny{$\bullet$}};
\draw (5.3,5.5) node{\tiny{$\bullet$}};
\draw (5.6,5.5) node{\tiny{$\bullet$}};
\draw (5.9,5.5) node{\tiny{$\bullet$}};
\draw (6.2,5.5) node{\tiny{$\bullet$}};
\draw (6.5,5.5) node{\tiny{$\bullet$}};
\draw (6.8,5.5) node{\tiny{$\bullet$}};
\draw (7.1,5.5) node{\tiny{$\bullet$}};
\draw (7.4,5.5) node{\tiny{$\bullet$}};
\draw (5.0,5.8) node{\tiny{$\bullet$}};
\draw (5.3,5.8) node{\tiny{$\bullet$}};
\draw (5.6,5.8) node{\tiny{$\bullet$}};
\draw (5.9,5.8) node{\tiny{$\bullet$}};
\draw (6.2,5.8) node{\tiny{$\bullet$}};
\draw (6.5,5.8) node{\tiny{$\bullet$}};
\draw (6.8,5.8) node{\tiny{$\bullet$}};
\draw (7.1,5.8) node{\tiny{$\bullet$}};
\draw (7.4,5.8) node{\tiny{$\bullet$}};

\draw (6,4.05) node{$Q^4$};
\draw (6,9) node{\tiny{$\bullet$}};
\draw (6,9.3) node{$(0,0)$};
\draw[dashed] (6,0) -- (6,9);
\end{tikzpicture}
\end{center}
\begin{rmrk}\label{stackrmrk}
We will mention some properties of stack of cubes.
\begin{itemize}
   \item $Q^{k}(\qr(x_0,t_0))$ may not be unique.
   \item Radius of $Q^{k}(\qr(x_0,t_0))$ is $z_k z_{k-1}...z_{1}\rho$. 
    \item $Q^{k+1}(\qr(x_0,t_0)) \subset \tilde{Q}^{k}(\qr(x_0,t_0))$. We would like to remind the reader that $\tilde{Q}^{k}(\qr(x_0,t_0))$ corresponding to $Q^{k}(\qr(x_0,t_0))$ as defined in (\ref{tildqk})  is the "shifted in time predecessor"  and $Q^{k+1}(\qr(x_0,t_0))$ is the next member in the family of cubes.
    \item If $d$ is the distance between $\{(0,t):t \in \R \}$ and $Q^{k}(\qr(x_0,t_0))$, then the distance between $\{(0,t):t \in \R \}$ and $\tilde{Q}^{k}(\qr(x_0,t_0))$ is at most  max$\{0,d-2r_k\}$. Hence, the distance between $\{(0,t):t \in \R \}$ and $Q^{k+1}(\qr(x_0,t_0))$ is at most max$\{0,d-2r_k\}$.
\end{itemize}
\end{rmrk}
Now, we will specify $z_1$, $z_2$, . . ., $z_m$ for defining our stack of cubes corresponding to the given nonlinearity. To do this, we will first
 define, for $k \geq 1$,
 \begin{align*}
     a_k=\frac{1}{kL_2(\eta(L^k)+1)},
 \end{align*}
 where $L$ is from Lemma \ref{measureuniform}.
Then, we have
\begin{align*}
     \frac{a_k}{a_{k+1}}=\frac{(k+1)(\eta(L^{k+1})+1)}{k(\eta(L^k)+1)}\leq\frac{(k+1)\eta(LL^{k})}{k\eta(L^k)}+\frac{k+1}{k(\eta(L^k)+1)}\leq\frac{(k+1)\eta(LL^{k})}{k\eta(L^k)}+\frac{k+1}{2k},
 \end{align*}
 where last inequality is a consequence of $\eta(t)\geq1$ for all $t\geq0$.\\
 Using the fact that $(k+1)/k$ goes to $1$ as $k\rightarrow\infty$ and (i) of Lemma \ref{etaprop} with $c=L$, we can choose $k_1 \geq 4$ such that
 \begin{align}\label{k1}
     \underset{k\geq k_1}{\text{sup}}\frac{a_k}{a_{k+1}}\leq\underset{k\geq k_1}{\text{sup}}\frac{(k+1)\eta(LL^{k})}{k\eta(L^k)}+\frac{k+1}{2k}\leq2.
 \end{align}
 We also define
 \begin{align*}
     m_{k_1}:=3,\hspace{2mm}m_{k}:=\frac{a_{k-1}}{a_k} \hspace{2mm} \text{for} \hspace{2mm} k>k_1.
 \end{align*}
 Now, we are ready to define $z_i$'s.\\  If $Q_\rho(x_0,t_0)$ is such that $|Q_\rho(x_0,t_0) \cap \{{u} > L^{l}\}| > (1-\mu)|Q_\rho(x_0,t_0)|,$ then
 \begin{align*}
     z_i=m_{l-i} \hspace{2mm} \text{for} \hspace{2mm} 1 \leq i \leq l-{k_1}.
 \end{align*}
Note that $z_i$'s are depending on super level set at which measure density of cube has crossed. To emphasize this fact,
we will denote $Q^{k}(\qr(x_0,t_0))$ by $Q^{k}(\qr(x_0,t_0),l)$.

To prove Theorem \ref{mthm} we require a couple of lemmas. In the lemmas, we will assume $u$ is a solution of following equations:
\begin{equation}\label{meq1}
    P^-(D^2u)-u_t\leq \tilde{\phi}(|Du|)
\end{equation}
and
\begin{equation}\label{meq2}
    P^+(D^2u)-u_t\geq-\tilde{\phi}(|Du|)
\end{equation}
where $\tilde{\phi} = {\Lambda_{0}}{r_1} \eta \big(\frac{1}{r_1}\big) \phi$ and $r_1$ is from Lemma \ref{measureuniform}.

Now, we will prove the lemma which states how measure information will propagate.  
\begin{lemma}\label{prop}
Let $u\in C(Q_2)$ be a viscosity supersolution of (\ref{meq1}) in $Q_2$.
 Let $ Q_\rho(x_0,t_0)$ satisfy the following conditions:
\begin{itemize}
    \item$|Q_\rho(x_0,t_0) \cap \{\tilde{u} > L^{k+1}\}| > \big( 1-\mu \big) |Q_\rho(x_0,t_0)|$,
    \item $ \rho \leq \frac{c_n{a_k}}{r}$,
\end{itemize}
where $\tilde{u}(x,t)=u(rx,r^2t).$
Then, the following hold:
\begin{itemize}
    \item  We have
    \begin{align*}
        \tilde{u}(x,t)> L^{k} \hspace{2mm}
        \text{in}  \hspace{2mm} \{(x,t): |x-x_0|_{\infty}<3\rho,t_0<t<(c_n^{-1}\rho)^2+t_0-\rho^2\}\cap\{-1 < t < 0\},
    \end{align*}
     which in particular implies,
     \begin{align*}
        \tilde{u}(x,t)> L^{k} \hspace{2mm}
        \text{in}  \hspace{2mm}  \widetilde{Q}^0(Q_{\rho}(x_0,t_0),k+1)\cap\{-1 < t < 0\}.
     \end{align*}
    
    \item For $k > k_1$, we have $\tilde{u}> L^{k-i}$ in $\widetilde{Q}^i(Q_{\rho}(x_0,t_0),k+1)\cap\{-1 < t < 0\}$ for all $i$ such that $0 \leq i < k-k_1$.
\end{itemize}
where ${Q^i}(Q_{\rho}(x_0,t_0),k+1)$ denotes the $i^{th}$ member of stacks of cube corresponding to $Q_{\rho}(x_0,t_0)$.
\end{lemma}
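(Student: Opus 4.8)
\textbf{Proof strategy for Lemma \ref{prop}.}

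The plan is to combine the basic measure estimate (Lemma \ref{measureuniform}, in the form of Remark \ref{rmu}) with the scaling Lemma \ref{scaling} and then iterate along the stack of cubes. First I would address the first bullet. The hypothesis $|Q_\rho(x_0,t_0)\cap\{\tilde u>L^{k+1}\}|>(1-\mu)|Q_\rho(x_0,t_0)|$ should be rewritten in the contrapositive form that $\{\tilde u/L^{k+1}\le 1\}$ has measure at most $\mu|Q_\rho|$; hence by the (contrapositive of the) measure estimate the function $v:=\tilde u/L^{k+1}$ \emph{cannot} satisfy $\inf_{K_3\cap D} v\le 1$ after the appropriate affine change of variables mapping $Q_\rho(x_0,t_0)$ to $Q_1$. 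The key point is that after this change of variables, $v$ solves an equation of the form \eqref{211}: indeed $\tilde u(x,t)=u(rx,r^2t)$ solves \eqref{meq1}, so dividing by $L^{k+1}$ multiplies the nonlinearity by $\Lambda_0\eta(L^{k+1})$ (using (P3)), and the parabolic rescaling by the radius $\rho$ contributes an extra factor of $\rho$ to the drift coefficient; the constraint $\rho\le c_n a_k/r$, together with the definition $a_k=\frac{1}{kL_2(\eta(L^k)+1)}$ and monotonicity/composition properties of $\eta$ from Lemma \ref{etaprop}, is precisely what forces the rescaled drift coefficient to stay below the threshold $\Lambda_0 r_1\eta(1/r_1)$ needed to apply Lemma \ref{measureuniform}. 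Once $\inf_{K_3\cap D}v>1$, unwinding the change of variables gives $\tilde u>L^{k+1}>L^k$ on the preimage of $K_3\cap D$, which after a short computation with the explicit definitions of $K_3$, $c_n$, and $\tilde Q_\rho$ is exactly the claimed set $\{|x-x_0|_\infty<3\rho,\ t_0<t<(c_n^{-1}\rho)^2+t_0-\rho^2\}\cap\{-1<t<0\}$, and in particular contains $\widetilde Q^0(Q_\rho(x_0,t_0),k+1)\cap\{-1<t<0\}$.

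For the second bullet I would argue by induction on $i$. Suppose we have shown $\tilde u>L^{k-i}$ on $\widetilde Q^i(Q_\rho(x_0,t_0),k+1)\cap\{-1<t<0\}$, where $Q^i$ has radius $r_i=z_i z_{i-1}\cdots z_1\rho$. In particular $\tilde u>L^{k-i}$ on $Q^{i+1}\subset\widetilde Q^i$ by the third property in Remark \ref{stackrmrk}; in fact more is true — $\tilde u>L^{k-i}$ on a full monotone-envelope-type region sitting above $Q^{i+1}$, so that the analogue of the measure hypothesis holds for $Q^{i+1}$ at level $k-i$: namely $|Q^{i+1}\cap\{\tilde u>L^{k-i}\}|=|Q^{i+1}|>(1-\mu)|Q^{i+1}|$. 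To apply the first bullet to $Q^{i+1}$ I must verify the radius constraint $r_{i+1}=z_{i+1}r_i\le c_n a_{k-i-1}/r$. Here the choice $z_{i+1}=m_{l-(i+1)}=m_{k-i}=a_{k-i-1}/a_{k-i}$ (for the indices in the regime $k-i>k_1$) enters crucially: it is engineered precisely so that $r_{i+1}/r_i=a_{k-i-1}/a_{k-i}$, and telescoping from the base case $r_0=\rho\le c_n a_k/r$ keeps the ratio $r_j/a_{k-j}$ constant, so the constraint propagates for free. For the finitely many indices where $l-i=k_1$ one uses $m_{k_1}=3$ and \eqref{k1} (which guarantees $a_{k-1}/a_k\le 2<3$ for $k\ge k_1$) to see this only enlarges the cube harmlessly. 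Applying the first bullet to $Q^{i+1}$ at level $k-i$ then yields $\tilde u>L^{k-(i+1)}$ on $\widetilde Q^{i+1}(Q_\rho(x_0,t_0),k+1)\cap\{-1<t<0\}$, closing the induction, valid as long as $k-(i+1)\ge k_1$, i.e. $0\le i<k-k_1$.

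The main obstacle I anticipate is the careful bookkeeping at the interface between the two bullets — namely verifying that the conclusion of the first bullet, which gives pointwise lower bounds on a region \emph{shaped like} $\widetilde Q^i$ but slightly larger in the time direction (the set $\{t_0<t<(c_n^{-1}\rho)^2+t_0-\rho^2\}$ versus $\widetilde Q^i$'s $\{t_0<t<t_0+9\rho^2\}$), genuinely propagates to give the \emph{full} measure hypothesis needed to re-run the argument on the next cube $Q^{i+1}$, including the portion of $Q^{i+1}$ and its ``stacked'' extension that must lie inside the region on which the bound is already known. This is exactly where the definition of the stack of cubes — the fact that each $Q^{k+1}$ is chosen inside $\widetilde Q^k$, shifted forward in time by $(z_{k+1}r_k)^2$ and pushed toward the axis $\{x=0\}$ — has to be used, and where one must also keep track of the intersection with $\{-1<t<0\}$ so that the domain $D$ in Remark \ref{rmu} is the correct one. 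The other delicate point, though more routine, is the chain of inequalities showing the rescaled drift coefficient stays subcritical; this is pure $\eta$-calculus using Lemma \ref{etaprop}(iii),(iv) and the definition of $a_k$, and I would isolate it as a short preliminary computation before entering the induction.
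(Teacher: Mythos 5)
Your high-level strategy is correct and is essentially the same as the paper's: after the affine change of variables that sends $K_1$ to $Q_\rho(x_0,t_0)$, use Lemma \ref{scaling} together with the constraint $c_n^{-1}\rho r\le a_k$ to conclude that the rescaled function is a legitimate supersolution, apply the contrapositive of the basic measure estimate via Remark \ref{rmu}, and then iterate along the stack using the telescoping identity $r_{j+1}/r_j = a_{k-j-1}/a_{k-j}$, which keeps the scaling constraint satisfied automatically. Your description of the preimage of $K_3\cap D$ and of why the telescoping reduces the radius constraint at step $i+1$ back to the original $c_n^{-1}\rho r\le a_k$ are both accurate.

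However, there is a normalization error in your treatment of the base case that breaks the argument as stated. You set $v:=\tilde u/L^{k+1}$, so that the hypothesis $|Q_\rho\cap\{\tilde u>L^{k+1}\}|>(1-\mu)|Q_\rho|$ translates after the change of variables to $|\{v>1\}\cap K_1|>(1-\mu)|K_1|$, i.e., $|\{v\le 1\}\cap K_1|<\mu|K_1|$. But the contrapositive of Lemma \ref{measureuniform} needs smallness of the sublevel set at level $L$, namely $|\{v\le L\}\cap K_1|<\mu|K_1|$. Since $\{v\le 1\}\subset\{v\le L\}$, the containment goes the wrong way and you cannot conclude $\inf_{K_3\cap D}v>1$. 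The paper normalizes by $L^k$ instead: with $v:=\tilde u/L^k$ the hypothesis becomes exactly $|\{v>L\}\cap K_1|>(1-\mu)|K_1|$, which is the right form, and the conclusion $\inf_{K_3\cap D}v>1$ then rescales to $\tilde u>L^k$ on the preimage of $K_3\cap D$, matching the lemma. One minor further point: your aside about the indices where $z_{i+1}=m_{k_1}=3$ is vacuous within the stated range $0\le i<k-k_1$; there the telescoping stays entirely in the regime $m_j=a_{j-1}/a_j$ with $j>k_1$, so $m_{k_1}$ never enters.
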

\begin{rmrk}
We can rephrase the first assertion of above lemma as follows: Take $(\hat{x},\hat{t} )$ such that $Q_{\rho}(x_0,t_0) =(\hat{x},\hat{t})+(c_n^{-1}\rho)K_1$, then 
\begin{align*}
    \tilde{u} > L^k \hspace{2mm} \text{in} \hspace{2mm} ((\hat{x},\hat{t})+(c_n^{-1}\rho)K_3) \cap \{-1 <t <0\}.
\end{align*}

\end{rmrk}
\begin{proof}
We will prove the result by induction on $i$.

For the case $i=0$, if
\begin{align*}
   \{(x,t):|x-x_0|_{\infty}<3\rho, t_0 <t<(c_n^{-1}\rho)^2+t_0-\rho^2\}\cap\{-1 < t < 0\}=\emptyset,
\end{align*}
then there is nothing to prove. So, we will assume
\begin{align}\label{200}
    \{(x,t):|x-x_0|_{\infty}<3\rho, t_0<t<(c_n^{-1}\rho)^2+t_0-\rho^2\}\cap\{-1< t < 0\} \not= \emptyset.
\end{align}
Define 
\begin{align*}
   v(\,x, t )\, &=\frac{\tilde{u}(c_n^{-1}\rho  x + x_0, {(\,c_n^{-1} \rho)\,}^2 t +t_0-\rho^2+{(\,c_n^{-1} \rho)\,}^2)\,}{L^k} ;\\
   &=\frac{u(c_n^{-1}\rho r x + x_0, {(\,c_n^{-1} \rho)\,}^2 r^2t +t_0-\rho^2+{(\,c_n^{-1} \rho)\,}^2)\,}{L^k} ;
\end{align*}
where $(x,t) \in D:=\{(x,t):|x|_{\infty}<1,-1<t\leq -1+c_n^2-t_0{(\,c_n^{-1} \rho)\,}^2\}$.\\
From (\ref{200}) we have $t_0 <0$, which implies $D$ contains $K_1$.
It is given that $\rho \leq \frac{c_na_k}{r}$, i.e., $c_n^{-1}\rho r \leq a_k$. So, by Lemma \ref{scaling}, $v$ is a supersolution in $D$.\\ Clearly, $(x,t) \in K_1$ iff $(c_n^{-1}\rho x + x_0, {(\,c_n^{-1}\rho)\,}^2t +t_0-\rho^2+{(\,c_n^{-1} \rho)\,}^2) \in Q_\rho(x_0,t_0)$, so
\begin{align*}
    \frac{|K_1 \cap \{v > L\}|}{|K_1|}=\frac{|Q_\rho(x_0,t_0) \cap \{\tilde{u} > L^{k+1}\}|}{|Q_\rho(x_0,t_0)|}.
\end{align*}
Now use first hypothesis of lemma to get $|K_1 \cap \{v > L\}| > \big( 1-\mu \big) |K_1|$, which using Remark \ref{rmu} gives ${\inf }_{K_3 \cap D} v > 1$. 
By rescaling back, we get the first part of the lemma. In particular, it gives the conclusion for $i=0.$

Assume result holds for $i\leq l$. We will prove it for $i=l+1 < k-k_1$.\\
If $\widetilde{Q}^{l+1}(Q_{\rho}(x_0,t_0),k+1)\cap\{-1 < t < 0\}= \emptyset$ then there is nothing to prove and we are done.\\
So, we will proceed for the case where $\widetilde{Q}^{l+1}(Q_{\rho}(x_0,t_0),k+1)\cap\{-1< t < 0\}\not= \emptyset$. In this case, 
${Q}^{l+1}(Q_{\rho}(x_0,t_0),k+1)\subset Q_{2}$.
By induction hypothesis, we have
\begin{equation}\label{1}
    \tilde{u} > L^{k-l} \hspace{2mm}\text{in} \hspace{2mm} {Q}^{l+1}(Q_{\rho}(x_0,t_0),k+1).
\end{equation}
By definition, ${Q}^{l+1}(Q_{\rho}(x_0,t_0),k+1)$ is a cube of radius $z_{l+1}z_{l}...z_1\rho$, which is the same as $m_{k-l}m_{k+1-l}...m_{k}\rho$ (we will call it $r_{l+1}$) and we call its center $(\,x_1,t_1)\,$ i.e., ${Q}^{l+1}(Q_{\rho}(x_0,t_0),k+1)=Q_{r_{l+1}}(\,x_1,t_1)\,$.\\
Define  
\begin{align*}
    v(\,x, t )\,&=\frac{\tilde{u}(c_n^{-1}r_{l+1} x + x_1, {(\,c_n^{-1}r_{l+1})\,}^2t +t_1-{r_{l+1}}^2+{(\,c_n^{-1}r_{l+1})\,}^2)\,}{L^{k-l-1}}\\
    &= \frac{u(c_n^{-1}r_{l+1}r x + x_1, {(\,c_n^{-1}r_{l+1}r)\,}^2t +t_1-{r_{l+1}}^2+{(\,c_n^{-1}r_{l+1})\,}^2)\,}{L^{k-l-1}},
    \end{align*}
where $(x,t)\in D:=\{(x,t):|x|_{\infty}<1,-1<t\leq -1+c_n^2-t_1{(\,c_n^{-1}r_{l+1})\,}^2\}$.\\
Note that $\widetilde{Q}^{l+1}(Q_{\rho}(x_0,t_0),k+1)\cap\{-1 < t < 0\}\not= \emptyset$  implies $t_1 < 0$. Hence, $D$ contains $K_1$.
In view of Lemma \ref{scaling}, we know $v$ will be a supersolution in its domain if
\begin{align*}
    c_n^{-1}r_{l+1}r &\leq a_{k-l-1},\\
   \text{i.e.,} \hspace{2mm} c_n^{-1} z_{l+1}z_{l}...z_1\rho r &\leq a_{k-l-1},\\
 \text{i.e.,} \hspace{2mm} c_n^{-1}m_{k-l}m_{k-l+1}...m_{k}\rho r &\leq a_{k-l-1},\\
  \text{i.e.,} \hspace{2mm} c_n^{-1}\frac{a_{k-l-1}}{a_{k-l}}\frac{a_{k-l}}{a_{k-l+1}}...\frac{a_{k-1}}{a_{k}}\rho r &\leq a_{k-l-1},\\
  \text{i.e.,} \hspace{2mm} c_n^{-1} \rho r &\leq a_{k}.
\end{align*}
This is true by the second hypothesis of lemma. Hence, $v$ is a supersolution in its domain. Also, we have $(x,t) \in K_1$ iff $(c_n^{-1}r_{l+1} r x + x_1, {(\,c_n^{-1}r_{l+1} r)\,}^2t +t_1-{r_{l+1}}^2+{(\,c_n^{-1}r_{l+1})\,}^2))\, \in Q_{r_{l+1}}(\,x_1,t_1)\,$ which gives
\begin{align*}
    \frac{|K_1 \cap \{v > L\}|}{|K_1|}=\frac{|Q_{r_{l+1}}(\,x_1,t_1)\, \cap \{\tilde{u} > L^{k-l}\}|}{|Q_{r_{l+1}}(\,x_1,t_1)\,|}.
\end{align*}
The above equation using \eqref{1} gives $|K_1 \cap \{v > L\}| = |K_1| > \big( 1-\mu \big) |K_1|$. Now, use Remark \ref{rmu} to get $\underset{K_3 \cap D}{\inf v} > 1$, which in particular implies $\tilde{u}>L^{k-l-1}$ in $\widetilde{Q}^{l+1}(Q_{\rho}(x_0,t_0),k+1)\cap\{-1 < t < 0\}$. This proves the induction step and hence the lemma.
\end{proof}
In the above lemma, measure information was propagating provided initial cube's size is small. We will now prove the lemma which will ensure smallness of cube.

\begin{lemma}\label{decay}
 Let $u\in C(Q_2)$ be a viscosity supersolution of (\ref{meq1}) in $Q_2$ such that
 $u(\,0,0)\, = 1$. 
Let there exist $Q_\rho(x_1,t_1) \subset K_1$ such that
\begin{align}\label{aa}
  | Q_\rho(x_1,t_1) \cap \{\tilde{u} > L^k\}| > \big( 1-\mu \big) |Q_\rho(x_1,t_1)|,
\end{align}
where $\tilde{u}=u(a_{k_1}x,a_{k_1}^2t)$.
Then, 
$\hspace{2mm} \rho < \frac{ c_n a_{k-1}}{3 a_{k_1}}$ for all $k \geq k_1 +1$.
\end{lemma}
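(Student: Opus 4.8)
The statement to prove is Lemma \ref{decay}: under the hypothesis that there is a small cube $Q_\rho(x_1,t_1)\subset K_1$ whose $(1-\mu)$-portion lies in $\{\tilde u>L^k\}$ (where $\tilde u = u(a_{k_1}x,a_{k_1}^2 t)$, $u(0,0)=1$), one must conclude $\rho < c_n a_{k-1}/(3a_{k_1})$ for all $k\ge k_1+1$. The natural approach is by contradiction together with the propagation machinery of Lemma \ref{prop}: if $\rho$ were $\ge c_n a_{k-1}/(3a_{k_1})$, then $Q_\rho(x_1,t_1)$ is large enough that the stack of cubes $\{Q^i(Q_\rho(x_1,t_1),k)\}$ climbs all the way up in time toward $t=0$, and Lemma \ref{prop} forces $\tilde u$ to be enormous — of size $L^{k-i}$ on the $i$-th member — near the origin, eventually contradicting $\tilde u(0,0)=u(0,0)=1$.

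First I would set up the contradiction hypothesis $\rho\ge c_n a_{k-1}/(3a_{k_1})$ and check that the two hypotheses of Lemma \ref{prop} are met for the rescaled function $\hat u(x,t)=\tilde u(rx,r^2t)=u(a_{k_1}rx,a_{k_1}^2r^2 t)$ with the appropriate choice of $r$ (namely $r=a_{k_1}$ so that $\tilde u$ itself plays the role of the "$\tilde u$" in Lemma \ref{prop}, or more precisely one applies Lemma \ref{prop} to $u$ with the rescaling parameter $a_{k_1}$). The first hypothesis is exactly \eqref{aa} with $k$ in place of $k+1$; the second hypothesis $\rho\le c_n a_{k-1}/a_{k_1}$ — wait, in Lemma \ref{prop} the size condition is $\rho\le c_n a_{k}/r$ with $k$ being the \emph{lower} level, so here with levels $k$ (top) and $k-1$ (bottom) one needs $\rho\le c_n a_{k-1}/a_{k_1}$, which is implied by the contradiction hypothesis up to the factor $3$. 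So the factor of $3$ in the target inequality is precisely the slack needed: one shows that if $\rho$ is too big, even a cube of radius $\rho/3 \le$ something still satisfies the size hypothesis of Lemma \ref{prop}, and then propagation applies.

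The core of the argument: apply Lemma \ref{prop} to get $\tilde u > L^{k-1-i}$ on $\widetilde Q^i(Q_\rho(x_1,t_1),k)\cap\{-1<t<0\}$ for $0\le i< (k-1)-k_1$, and on $\widetilde Q^0$ one even gets $\tilde u>L^{k-1}$ right at level just below the top. Now I track the stack as $i$ increases: by the last bullet of Remark \ref{stackrmrk}, the distance of $Q^{i}$ to the time axis $\{(0,t)\}$ decreases by at least $2r_i$ at each step, and the radii $r_i = z_i z_{i-1}\cdots z_1 \rho$ with $z_j = m_{k-j}$ satisfy (by the definitions $m_{k_1}=3$, $m_k = a_{k-1}/a_k$, and the key estimate \eqref{k1} giving $a_k/a_{k+1}\le 2$ for $k\ge k_1$) a controlled growth: $r_i/\rho = m_{k-1}m_{k-2}\cdots m_{k-i} = a_{k-1-i+?}/a_{k-1}\cdot(\text{stuff})$ — telescoping, $m_{k-1}\cdots m_{k-i} = a_{k-1}/a_{k-1-i}\cdot$ correction for the $m_{k_1}=3$ term. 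One shows the stack reaches within distance $0$ of the axis (i.e. some $\widetilde Q^{i_0}$ actually contains a neighborhood of a point $(0,\tau)$ with $\tau$ close to $0$) before $i$ exhausts the allowed range $i<k-1-k_1$; this uses that $\rho$ is bounded below by $c_n a_{k-1}/(3a_{k_1})$ and $\sum 2 r_i$ exceeds the initial distance $d\le \sqrt n + \ldots$. Then at that point $\tilde u > L^{k-1-i_0}$ near the axis for $t$ near $0$; pushing one more step or using continuity and $\tilde u(0,0)=1$ forces $L^{k-1-i_0}<1$, i.e. $i_0 > k-1$, contradicting $i_0 < k-1-k_1 \le k-1$.

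The main obstacle I anticipate is the bookkeeping that the stack of cubes genuinely reaches the time axis within the permitted number of steps $k-1-k_1$ — i.e. verifying $\sum_{i=0}^{k-1-k_1} 2 r_i$ dominates the initial distance $d$ of $Q_\rho(x_1,t_1)$ (which sits inside $K_1$, hence $|x_1|_\infty < c_n$ and $t_1\in(-1,-1+c_n^2)$, so $d$ is at most roughly $c_n$ in space and the total time to climb is $\approx 1$). This requires carefully using the radius formula $r_i = (\prod_{j=1}^i m_{k-j})\rho$, the telescoping $\prod m_{k-j} = a_{k_1}/a_{k-i}\cdot$(the one factor of $3$ from $m_{k_1}$), and the lower bound on $\rho$ — together with the fact from Lemma \ref{etaprop}(iv) or \eqref{k1} that the ratios $a_{k-1}/a_{k-i}$ grow at a controlled (at most geometric, ratio $\le 2$) rate, so that the time-steps $r_i^2$ sum up to something of order $1$ rather than collapsing. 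I would organize this as: (1) reduce to contradiction, (2) verify Lemma \ref{prop} applies with the factor-3 slack, (3) estimate radii and distances along the stack, (4) deduce the stack meets the axis within range, (5) extract the contradiction with $\tilde u(0,0)=1$.
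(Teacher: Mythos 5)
Your plan misidentifies the structure of the argument, and the gap is not cosmetic: it lies in how one verifies the size hypothesis of Lemma~\ref{prop}, which is the linchpin. Lemma~\ref{prop}, applied to the measure condition $|Q_\rho \cap \{\tilde u > L^k\}| > (1-\mu)|Q_\rho|$ at level $k$, requires the \emph{upper} bound $\rho \le c_n a_{k-1}/a_{k_1}$ (the ``lower level'' in the lemma is $k-1$ and $r=a_{k_1}$). You write that this ``is implied by the contradiction hypothesis up to the factor $3$,'' but the contradiction hypothesis is $\rho \ge c_n a_{k-1}/(3a_{k_1})$ — a \emph{lower} bound — which cannot supply the needed upper bound. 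The only free upper bound is $\rho \le c_n$ from $Q_\rho\subset K_1$, and since $a_{k-1}<a_{k_1}$ for $k>k_1+1$, this is too weak. The paper resolves this precisely by arguing \emph{by induction on $k$}: the conclusion for $k=l$ gives $\rho < c_n a_{l-1}/(3 a_{k_1})$, which (via the estimate $a_{l-1}/a_l \le 2$ from \eqref{k1}) yields $\rho < c_n a_l/a_{k_1}$, i.e.\ exactly the size hypothesis needed to apply Lemma~\ref{prop} at level $l+1$. A direct (non-inductive) contradiction argument has no way to justify the application of Lemma~\ref{prop} to begin with.

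Beyond this, your step (4) — ``the stack reaches the axis within range'' — elides what is actually the hardest part of the paper's argument. After applying Lemma~\ref{prop}, the paper must handle three cases depending on whether the first stack member of radius $\ge c_n$ sits entirely below $t=0$, straddles $t=0$, or sits entirely above $t=0$. The third case is genuinely delicate: one needs Claim(2), namely that some earlier stack member contains $(0,0)$, and proving it requires bounding $\sum r_j^2$ via the explicit numerical constants (using $c_n=(10n)^{-1}$ and the observation $r_0+\dots+r_{j_0-2}<c_n\sqrt n/2$) so that the stack first meets $\{0\}\times\R$ while still at negative times. Your phrase ``pushing one more step or using continuity'' does not substitute for this case analysis, and your distance-tracking estimate would not rule out the stack overshooting into $t>0$ without ever containing $(0,0)$. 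In short: replace the direct contradiction with an induction on $k$, and fill in the three-case analysis including the $\sum r_j^2$ estimate.
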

\begin{proof}
We will prove it by induction on $k$.\\
Step 1: For $k=k_1+1$.
Since $Q_\rho(x_1,t_1) \subset K_1$ so $\rho \leq c_n$, which is the same as $\rho \leq \frac{c_n a_{k_1}}{a_{k_1}}$.
Then by Lemma \ref{prop}, we have 
\begin{align}\label{ak2}
    \tilde{u}(x,t) &>L^{k_1} \hspace{2mm} \text{in} \hspace{2mm}\tilde{Q}_{\rho}(x_1,t_1).
 \end{align}
We claim that $3\rho < c_n$.\\
Assume this is not true, i.e., $3\rho \geq c_n$. 
Recall radius of $\tilde{Q}_{\rho}(x_1,t_1)$ is $3\rho$, which is greater than or equal to $c_n$, so it contains some translated version of $K_1$. So, we can
take $(\,x_0,t_0)\,$ such that the following hold:
\begin{itemize}
    \item[(i)] $(\,x_0,t_0)\, +K_1 \subset \tilde{Q}_{\rho}(x_1,t_1)$,
    \item[(ii)]$\{(0,s):s<0\} \cap (\,x_0,t_0)\, +K_1 \not= \emptyset.$
\end{itemize}
(we can assure the condition (ii) by a simple application of triangle inequality to the fact that $Q_\rho(x_1,t_1) \subset K_1
$) \\
Also, note that condition (ii) will imply $(0,0) \in (\,x_0,t_0)\, + K_3$. Note that (\ref{ak2}) gives
\begin{align*}
   \frac{|(x_0,t_0)+K_1 \cap \{\tilde{u} > L^{k_1}\}|}{|(x_0,t_0)+K_1|}=1>(1-\mu).
\end{align*}
Since $(a_k)$ is a decreasing sequence, $c_n < \frac{c_n a_{k_1-1}}{a_{k_1}}$. Hence, by Lemma \ref{prop}, we get $\tilde{u} > L^{k_1-1}$ in $((x_0,t_0)+K_3) \cap Q_1$. In particular, we get $u(0,0) > 1$, which is a contradiction to $u(0,0)=1$.
Hence we get our claim, i.e., $3\rho < c_n$.
This proves the lemma for $k=k_1+1.$\\
Step 2: Assume result holds for $k=l \geq k_1+1$, i.e., if there exists $Q_\rho \subset K_1$ such that
\begin{align}\label{aa1}
  | Q_\rho(x_1,t_1) \cap \{\tilde{u} > L^l\}| > \big( 1-\mu \big) |Q_\rho(x_1,t_1)|
\end{align}
then $\hspace{2mm} \rho < \frac{ c_n a_{l-1}}{3 a_{k_1}}$.\\
Take $Q_\rho(x_1,t_1) \subset K_1$ such that 
\begin{align}\label{aa2}
  | Q_\rho(x_1,t_1) \cap \{\tilde{u} > L^{l+1}\}| > \big( 1-\mu \big) |Q_\rho(x_1,t_1)|.
\end{align}
Clearly, (\ref{aa2}) implies (\ref{aa1}). So, by induction hypothesis, we have 
\begin{align*}
    \rho < \frac{ c_n a_{l-1}}{3 a_{k_1}}.
\end{align*}
Using $\underset{k\geq k_1}{\text{sup}}\frac{a_k}{a_{k+1}} \leq 2$ and $l-1 \geq k_1$, We get
\begin{align}\label{aa3}
    \rho < \frac{ c_n a_{l}}{ a_{k_1}}.
\end{align}
Now (\ref{aa2}) and (\ref{aa3}) make us eligible to apply Lemma \ref{prop}, and we get
\begin{align}\label{aa500}
    \tilde{u}> L^{l-i} \hspace{2mm}\text{in}\hspace{2mm} \widetilde{Q}^i(Q_{\rho}(x_1,t_1),l+1)\cap\{-1 < t < 0\} \hspace{2mm}\text{for all}\hspace{2mm} i\hspace{2mm} \text{such that}\hspace{2mm} 0 \leq i < l-k_1,
\end{align}
which implies,
\begin{align}\label{aa501}
     \tilde{u}> L^{l+1-i} \hspace{2mm}\text{in}\hspace{2mm} {Q}^i(Q_{\rho}(x_1,t_1),l+1)\cap\{-1 < t < 0\} \hspace{2mm}\text{for all}\hspace{2mm} i\hspace{2mm} \text{such that}\hspace{2mm} 0 \leq i \le l-k_1.
\end{align}
If $\widetilde{Q}^{l-k_1}(Q_{\rho}(x_1,t_1),l+1)\cap\{-1 < t < 0\} \not= \emptyset$, then ${Q}^{l-k_1}(Q_{\rho}(x_1,t_1),l+1) \subset Q_2.$\\
Also, note that the radius of ${Q}^{l-k_1}(Q_{\rho}(x_1,t_1),l+1)$ is $z_{l-k_1}z_{l-k_1-1}...z_1\rho$,  i.e., $m_{k_1+1}m_{k_1+2}...m_l\rho$. Using (\ref{aa3}) we get
\begin{align*}
   \text{radius of ${Q}^{l-k_1}(Q_{\rho}(x_1,t_1),l+1)$}=\frac{a_{k_1}}{a_l}\rho<c_n
      \leq \frac{c_n a_{k_1}}{a_{k_1}}.
\end{align*}
Now, apply Lemma \ref{prop} ( for $k=k_1$ and cube ${Q}^{l-k_1}(Q_{\rho}(x_1,t_1),l+1)$ ) to get 
\begin{align}\label{aa502}
    \tilde{u}>L^{k_1} \hspace{2mm} \text{in} \hspace{2mm}\widetilde{Q}^{l-k_1}(Q_{\rho}(x_1,t_1),l+1)\cap\{-1 < t < 0\}. 
\end{align}
Hence, from (\ref{aa500}), (\ref{aa501}) and (\ref{aa502}), we have
\begin{align}\label{aa4}
    \tilde{u}> L^{l-i} \hspace{2mm}\text{in}\hspace{2mm} \widetilde{Q}^i(Q_{\rho}(x_1,t_1),l+1)\cap\{-1 < t < 0\} \hspace{2mm}\text{for all}\hspace{2mm} i\hspace{2mm} \text{such that}\hspace{2mm} 0 \leq i \leq l-k_1,
\end{align}
which implies,
\begin{align}\label{aa5}
    \tilde{u}> L^{l+1-i} \hspace{2mm}\text{in}\hspace{2mm} {Q}^i(Q_{\rho}(x_1,t_1),l+1)\cap\{-1 < t < 0\} \hspace{2mm}\text{for all}\hspace{2mm} i\hspace{2mm} \text{such that}\hspace{2mm} 0 \leq i \le l-k_1+1.
\end{align}
\textbf{Claim(1)}: $\rho < \frac{ c_n a_{l}}{3 a_{k_1}}$.\\
\textbf{Proof of Claim(1)}: We will prove it by contradiction. Assume Claim(1) is not true, i.e., $\rho \geq \frac{ c_n a_{l}}{3 a_{k_1}}$,  which is the same as saying that,  $\frac{3 a_{k_1}}{  a_{l}}\rho \geq c_n$, which we can write as $3\frac{a_{k_1}}{a_{k_1+1}}\frac{a_{k_1+1}}{a_{k_1+2}}...\frac{a_{l-1}}{a_{l}}\rho \geq c_n$, which is $m_{k_1}m_{k_1+1}...m_{l}\rho \geq c_n$, i.e., radius of ${Q}^{l+1-k_1}(Q_{\rho}(x_1,t_1),l+1) \geq c_n.$ Choose first $i_0 $ such that radius of ${Q}^{i_0}(Q_{\rho}(x_1,t_1),l+1) \geq c_n.$
Clearly, $i_0 \leq l+1-k_1$. \\
There are three possible situations:
\begin{itemize}
    \item[1.] ${Q}^{i_0}(Q_{\rho}(x_1,t_1),l+1) \subset \R^n\times \{t<0\}$.
     \item[2.] ${Q}^{i_0}(Q_{\rho}(x_1,t_1),l+1) \cap \R^n\times \{t=0\} \not= \emptyset$.
      \item[3.] ${Q}^{i_0}(Q_{\rho}(x_1,t_1),l+1) \subset \R^n\times \{t>0\}$.
\end{itemize}
Now, we will rule out all three situations to get a contradiction.

\textbf{Case 1}: ${Q}^{i_0}(Q_{\rho}(x_1,t_1),l+1) \subset \R^n\times \{t<0\}$.

Note that radius of ${Q}^{i_0}(Q_{\rho}(x_1,t_1),l+1) \geq c_n$, so we can take $(x_0,t_0)$ such that
\begin{itemize}
    \item[(A)] $(x_0,t_0)+K_1 \subset {Q}^{i_0}(Q_{\rho}(x_1,t_1),l+1)$ and
    \item[(B)] $(x_0,t_0)+K_1$ intersects $\{0\} \times \R$.
\end{itemize}
Note that radius of ${Q}^{i_0}(Q_{\rho}(x_1,t_1),l+1) \geq c_n$ and $Q_{\rho}(x_1,t_1) \subset K_1$. With an easy application of triangle inequality to these facts, we can ensure condition (B). Condition (B) will further imply $(0,0) \in (x_0,t_0)+K_3$.

Note that, from (\ref{aa5}) and $i_0 \leq l+1-k_1$, we have $\tilde{u}>L^{k_1}$ in $(x_0,t_0)+K_1$. Also, radius of $(x_0,t_0)+K_1=c_n \leq \frac{c_n a_{k_1-1}}{a_{k_1}}.$ Therefore, from Lemma \ref{prop}, we get $\tilde{u}>L^{k_1-1}$ in $(x_0,t_0)+K_3 \cap Q_1$, which in particular implies $u(0,0)=\tilde{u}(0,0)>L$, but this is not true as $u(0,0)=1$. Hence, this situation is ruled out.

\textbf{Case 2}: ${Q}^{i_0}(Q_{\rho}(x_1,t_1),l+1) \cap \R^n\times \{t=0\} \not= \emptyset$.

By definition of $i_0$, we have  ${Q}^{i_0}(Q_{\rho}(x_1,t_1),l+1) \cap \{0\}\times \R \not= \emptyset$. Hence, we get $(0,0) \in {Q}^{i_0}(Q_{\rho}(x_1,t_1),l+1).$ Use (\ref{aa5}) and $i_0 \leq l+1-k_1$ to get $u(0,0)=\tilde{u}(0,0)>L^{k_1}$, which is false as $u(0,0)=1$. Hence, this situation is ruled out.\\
\textbf{Case 3}: ${Q}^{i_0}(Q_{\rho}(x_1,t_1),l+1) \subset \R^n\times \{t>0\}$.

\textbf{Claim(2)}: There exists $i<i_0$ such that $(0,0) \in {Q}^{i}(Q_{\rho}(x_1,t_1),l+1)$.

Assume Claim(2) has been established, then by (\ref{aa5}) we have  $u(0,0) > L^{l+1-i}$. Now, $i \leq l-k_1$ implies $u(0,0) > L^{k_1+1}$, which contradicts $u(0,0)=1$. Hence, we will rule this situation out provided we could establish the Claim(2). 

Now we will establish Claim(2).\\
Choose first $j_0$ such that ${Q}^{j_0}(Q_{\rho}(x_1,t_1),l+1)\cap \{0\}\times \R \not= \emptyset$. Clearly, $j_0 \leq i_0$.
Note the following observations:
\begin{itemize}
    \item 
    For $j_0 \geq 2$, we have
    \begin{align*}
       r_0+r_1+...+r_{j_0-2}<\frac{c_n\sqrt{n}}{2}.
   \end{align*}
The reason is as follows. If $d$ is the distance between $Q_{\rho}(x_1,t_1)$ and $\{0\}\times \R$, then by Remark \ref{stackrmrk},   distance between ${Q}^{j_0-1}(Q_{\rho}(x_1,t_1),l+1)$ and $\{0\}\times \R$ is at most max$\{d-2r_0-2r_1-...-2r_{j_0-2},0\}$. Now,
    \begin{align*}
        {Q}^{j_0-1}(Q_{\rho}(x_1,t_1),l+1)\cap \{0\}\times \R = \emptyset\\
        \implies d-2r_0-2r_1-...-2r_{j_0-2} > 0\\
        \implies 2r_0+2r_1+...+2r_{j_0-2}<d.
    \end{align*}
   Also, $Q_{\rho}(x_1,t_1) \subset K_1$ implies $d<c_n\sqrt{n}$. Hence, we get 
   \begin{align*}
       r_0+r_1+...+r_{j_0-2}<\frac{c_n\sqrt{n}}{2}.
   \end{align*}
   \item For $j_0 \geq 1$, $r_{j_0-1} < c_n$ and $r_{j_0}< 3c_n$.\\
   ($r_{j_0-1} < c_n$ follows from definition of $j_0$. And $r_{j_0}< 3c_n$ follows from $r_{j_0} \leq 3r_{j_0-1}$ )
\end{itemize}
Now, we will show ${Q}^{j_0}(Q_{\rho}(x_1,t_1),l+1) \subset \R^n \times \{t<0\}$. For $j_0=0$, we are done because $Q_{\rho}(x_1,t_1) \subset K_1$. Also, for $j_0 \geq 1$, we have $(x,t) \in {Q}^{j_0}(Q_{\rho}(x_1,t_1),l+1)$ implies
\begin{align*}
    t  
    \leq &\hspace{1mm}t_1+r_1^2+...+r_{j_0-2}^2+r_{j_0-1}^2+r_{j_0}^2\\
    \leq&-1+ c_n^2+r_1^2+...+r_{j_0-2}^2+r_{j_0-1}^2+r_{j_0}^2\\
    <&-1+ c_n^2+r_0^2+r_1^2+...+r_{j_0-2}^2+r_{j_0-1}^2+r_{j_0}^2\\
    \leq&-1+ c_n^2+(r_0+r_1+...+r_{j_0-2})^2+r_{j_0-1}^2+r_{j_0}^2\\
    \leq&-1+ c_n^2+\frac{nc_n^2}{4}+c_n^2+9c_n^2\\
    \leq&-1+12nc_n^2\\
    \leq&-1+\frac{12}{100n}\\
    <&0.
\end{align*}
Therefore we are done. Thus, we get $j_0<i_0$,
\begin{align*}
    {Q}^{j_0}(Q_{\rho}(x_1,t_1),l+1) \subset \R^n\times \{t<0\},\\
    {Q}^{j_0}(Q_{\rho}(x_1,t_1),l+1)\cap \{0\}\times \R \not= \emptyset.
\end{align*}
Also, we have ${Q}^{i_0}(Q_{\rho}(x_1,t_1),l+1) \subset \R^n\times \{t>0\}$. Therefore, the way we have defined the stack of cubes will give $i$ such that $j_0<i<i_0$ and $(0,0) \in {Q}^{i}(Q_{\rho}(x_1,t_1),l+1)$. This establishes the Claim(2).
\end{proof}
 
Now, we are ready to prove the $L^{\epsilon}$-estimate.

\begin{lemma}\label{wh}
 Let $u\in C(Q_2)$ be a viscosity supersolution of (\ref{meq1}) in $Q_2$ such that
 $u(\,0,0)\, = 1$. Then, there exist universal constants $C$ and $\epsilon >0$ such that the following hold:
 \begin{align}
    |\{\tilde{u} >\tau\} \cap \hat{K}| \leq C\tau^{-{\epsilon}}
\end{align}
and
 \begin{align}\label{lepsilon}
     \bigg(\int_{\hat{K}}\tilde{u}^{\epsilon}\bigg)^{\frac{1}{\epsilon}} \leq C,
 \end{align}
 where $\tilde{u}(x,t)=u(a_{k_1}x,a_{k_1}^2t),$ and
 $\hat{K} = \{(x,t):|x|_{\infty} < c_n, -1 < t < -1+c_n^2/2\}$.
\end{lemma}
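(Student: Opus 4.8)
The bound \eqref{lepsilon} follows formally from the first (super-level) estimate by the layer-cake identity $\int_{\hat K}\tilde u^{\epsilon}=\epsilon\int_0^{\infty}\tau^{\epsilon-1}|\{\tilde u>\tau\}\cap\hat K|\,d\tau$, split at $\tau=1$, once $\epsilon$ is taken strictly below the decay exponent produced by the first estimate. For the first estimate it is, by monotonicity of super-level sets, enough to prove the dyadic decay $|\{\tilde u>L^{l}\}\cap\hat K|\le C\theta^{\,l}$ for all integers $l\ge 0$, with $L$ and $\theta\in(0,1)$ universal: then for $\tau\ge1$ one picks $l$ with $L^{l}\le\tau<L^{l+1}$ and obtains the power bound with $\epsilon=\log(1/\theta)/\log L$, while for $\tau<1$, and for $l\le k_1$, the bound is trivial since the left-hand side is at most $|\hat K|$.

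The plan for the dyadic decay is a Calderón--Zygmund iteration on the levels, in the spirit of \cite{W1,is} but with the uniform stack replaced by the level-adapted stack of cubes introduced above. One passes from level $l$ to level $l-1$ by invoking the covering Lemma \ref{cald} with $\delta=1-\mu$, on a reference cube large enough (still inside $Q_2$) to accommodate the stacks that appear -- legitimate precisely because Lemma \ref{decay} forces those stacks to be small. The covering hypothesis of Lemma \ref{cald} is exactly Lemmas \ref{decay} and \ref{prop} chained together: if a dyadic cube $K\subset K_1$ carries density $>1-\mu$ in $\{\tilde u>L^{l}\}$, then Lemma \ref{decay} -- applicable because $\tilde u(x,t)=u(a_{k_1}x,a_{k_1}^2t)$ -- forces its radius $\rho$ to satisfy $\rho<\frac{c_na_{l-1}}{3a_{k_1}}$, and this is exactly the admissibility threshold $\rho\le\frac{c_na_{l-1}}{a_{k_1}}$ of Lemma \ref{prop} (with $r=a_{k_1}$ and its index ``$k$'' equal to $l-1$); hence, after renormalising $\tilde u$ on a rescaling of $K$ via Lemma \ref{scaling} and invoking the basic measure estimate Lemma \ref{measureuniform} (in the form of Remark \ref{rmu}), Lemma \ref{prop} gives $\tilde u>L^{l-1}$ on a region sitting above $K$ of space-radius $3\rho$ and time-height of order $c_n^{-2}\rho^2$; since $c_n^{-2}$ is a large universal constant this region contains the time-stacked predecessor $\bar K^{m}$ for a universal $m$ with $(1-\mu)\frac{m+1}{m}=:\theta<1$, which is the required implication. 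Lemma \ref{cald} then furnishes one contraction by $\theta$, and iterating down to a fixed universal base level -- together with the bookkeeping that keeps track, through the stack, of which time-shifted cube each super-level set lives in -- yields $|\{\tilde u>L^{l}\}\cap\hat K|\le C\theta^{\,l}$.

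The hard part will be this very bookkeeping. In the homogeneous parabolic theory the stacked predecessor used at each level is a fixed, uniform object and one can run the iteration on a single fixed pair of cubes; here the cube produced at level $l$ by the measure estimate depends on $l$ through the intrinsic scale $a_l$, so the smallness statement of Lemma \ref{decay} and the admissibility condition of Lemma \ref{prop} must interlock exactly at every level (the harmless factor $3$ notwithstanding), which is only possible because the multipliers $m_k$ defining the stack were built from the ratios $a_{k-1}/a_k$ and because \eqref{k1} and Lemma \ref{etaprop} keep these ratios controlled -- this is precisely where the slow growth of $\eta$ is used. The remaining points -- bounded overlap of dyadic predecessors, the fact that every stack stays inside $Q_2$ (ensured by Lemma \ref{decay} and the geometric observations in Remark \ref{stackrmrk}), and absorbing the finitely many low levels into the constant -- are routine, and \eqref{lepsilon} then follows as above.
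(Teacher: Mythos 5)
Your outline identifies the correct framework --- the covering Lemma \ref{cald} combined with Lemmas \ref{measureuniform}, \ref{prop} and \ref{decay}, iterated through the levels --- and the layer-cake reduction of \eqref{lepsilon} to the super-level decay is fine. But there is a genuine gap in the step you yourself call ``the hard part,'' and the paper resolves it differently from what you sketch.

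You claim that a \emph{single} application of Lemma \ref{prop} gives $\tilde u>L^{l-1}$ on a region of time-height $\sim(c_n^{-2}-1)\rho^2$ that already contains $\bar K^{m}$, so that the level drops only by one per Calder\'on--Zygmund iteration. This requires $4(m+1)\lesssim c_n^{-2}-1$, i.e.\ a bound on $m$ depending only on $n$. But $m$ is dictated by the requirement $(1-\mu)\tfrac{m+1}{m}<1$, hence $m>\tfrac{1-\mu}{\mu}$, and $\mu$ is just ``some'' universal constant coming out of the ABP barrier: there is no a priori reason for it to be large enough to make $m\le 25n^2$ (say). The paper sidesteps this by not covering $\bar K^{m}$ in one shot: after the first application of Lemma~\ref{prop} it re-applies the lemma to a thin cube at the top of the region just obtained, producing another slice of height $(c_n^{-2}-1)\rho^2$ but at one level lower, and it repeats this up to $m$ times; since $m(c_n^{-2}-1)>4(m+1)$ for every $m\ge1$, the stack $\bar{Q_\rho}^{m}$ is eventually covered at level $(k+1)m-m=km$. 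As a consequence the Calder\'on--Zygmund iteration runs over the levels $L^{km}$, not over all $L^{l}$, and the decay exponent picks up the factor $1/m$. Your one-step version would simply fail for small~$\mu$.

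The second point glossed over is the time bookkeeping. Because $\bar K^{m}$ is shifted upward in time, the target set $B$ of Lemma \ref{cald} must have a \emph{strictly larger} time window than $A$ at each step; applying the covering lemma with a fixed $A=\{\tilde u>L^{l}\}\cap\hat K$, $B=\{\tilde u>L^{l-1}\}\cap\hat K$ on the fixed box $\hat K$ does not satisfy its hypothesis. The paper defines the nested sets
\[
A_k=\Big\{\tilde u>L^{km}\Big\}\cap\Big\{|x|_\infty<c_n,\ -1<t<-1+c_n^2-\tfrac{m+1}{a_{k_1}^2}\sum_{j=k_2}^{k}j^{-2}\Big\},
\]
whose shrinking time windows differ at step $k$ by exactly $\tfrac{m+1}{a_{k_1}^{2}(k+1)^{2}}$, and this is what absorbs the upward drift $4(m+1)\rho^{2}$ of $\bar K^{m}$ --- the budget being available precisely because Lemma \ref{decay} bounds $\rho$ by $\tfrac{c_n}{3a_{k_1}}a_{(k+1)m-1}\lesssim\tfrac{1}{(k+1)m\,a_{k_1}}$. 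One then recovers $\hat K$ at the end since $\sum_{j\ge k_2}j^{-2}$ was made $\le c_n^2 a_{k_1}^{2}/(2(m+1))$. Your proposal alludes to ``bookkeeping that keeps track of which time-shifted cube each super-level set lives in,'' which is the right instinct, but as stated the iteration is on a fixed target set and the stacked predecessor escapes it.

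So: right lemmas, right covering lemma, right rescaling threshold $\rho<c_n a_{l-1}/(3a_{k_1})$ matching the admissibility of Lemma \ref{prop}, but the single-slice claim is false in general, and without the shrinking windows $A_k$ the covering hypothesis of Lemma \ref{cald} cannot be verified.
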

\begin{proof}
 Choose $m$ large enough so that $\big(1-\frac{\mu}{2}\big) \leq \frac{m+1}{m}(1-\mu)$, where $\mu$ is from Lemma \ref{measureuniform}. Now, choose $k_{2} >k_1$ large enough such that 
\begin{equation}
 \sum_{k\geq k_2} \frac{(m+1) a_k^2}{a_{k_1}^2}= \frac{m+1}{a_{k_1}^2} \sum_{k\geq k_2}a_k^2 \leq \frac{m+1}{a_{k_1}^2} \sum_{k\geq k_2} \frac{1}{k^2} \leq \frac{c_n^2}{2}.
\end{equation}
Define, for $k$ $\geq$ $k_2$,
\begin{equation}
    A_k = \bigg\{(x,t):\hspace{1mm} \tilde{u}(x,t) > L^{km},\hspace{1mm} |x|_{\infty} < c_n \hspace{2mm} \& \hspace{2mm} -1 < t < -1+c_n^2-   \frac{m+1}{a_{k_1}^2}\sum_{j=k_2}^{k} \frac{1}{j^2}  \bigg\}.
\end{equation}
We claim that, for $k \geq k_2$, the following holds:
\begin{align*}
    | A_{k+1} | \leq \Big( 1- \frac{\mu}{2} \Big) | A_{k} |.
\end{align*}
To prove this, we want to apply the Lemma \ref{cald} with 
\begin{align*}
    A&=A_{k+1},\\
    B&=A_{k},\\ 
    Q&=K_1=(-c_n,c_n)^n \times (-1,-1+c_n^2),\\
 \text{and} \hspace{2mm}  \delta&=1-\mu.
\end{align*}
We have $\tilde{u}(0,0)=u(0,0)=1$, so $\text{inf}_{K_3}\tilde{u}\leq 1$. Hence, by Lemma \ref{measureuniform}, we have 
\begin{align*}
    |A| \leq |\{\tilde{u} > L\} \cap K_1| < (1-\mu)|K_1|=(1-\mu)|Q|.
\end{align*}
Take any dyadic cube $Q_\rho(x_1,t_1) \subset Q$ such that
\begin{equation*}
| Q_\rho(x_1,t_1) \cap A | > ( 1-\mu ) |Q_\rho(x_1,t_1)|,
\end{equation*}
i.e.,
\begin{equation}\label{11}
| Q_\rho(x_1,t_1) \cap A_{k+1} | > ( 1-\mu ) |Q_\rho(x_1,t_1)|.
\end{equation}
 From now onwards in this proof we will call $Q_\rho(x_1,t_1)$ by $Q_\rho$.
In view of Lemma \ref{cald}, we have to show $ \Bar{Q_{\rho}}^m \subset B$,
i.e., we have to show $\Bar{Q_{\rho}}^m \subset A_{k}$.\\
First we will show
\begin{align}\label{145}
   \Bar{Q_{\rho}}^m \subset \{|x|_{\infty} < c_n\} \times \bigg\{-1 < t < -1+c_n^2-   \frac{m+1}{a_{k_1}^2}\sum_{j=k_2}^{k} \frac{1}{j^2}  \bigg\}.
\end{align}
Because of (\ref{11}), we have
\begin{align*}
    Q_{\rho} \cap \{|x|_{\infty} < c_n\} \times \bigg\{-1 < t < -1+c_n^2-   \frac{m+1}{a_{k_1}^2}\sum_{j=k_2}^{k+1} \frac{1}{j^2}  \bigg\} \not= \emptyset.
\end{align*}
Hence
\begin{align*}
    \Bar{Q_{\rho}}^m \subset \{|x|_{\infty} < c_n\} \times \bigg\{-1 < t < -1+c_n^2-   \frac{m+1}{a_{k_1}^2}\sum_{j=k_2}^{k+1} \frac{1}{j^2} + \text{height}(\Bar{Q_{\rho}})+\text{height}(\Bar{Q_{\rho}}^m) \bigg\},
\end{align*}
where height($L$) = sup$\{t:\exists\hspace{1mm} x,\hspace{1mm}(x,t)\in L\}$ - inf$\{t:\exists\hspace{1mm} x,\hspace{1mm}(x,t)\in L\}$.
Moreover,
\begin{align*}
    \text{height}({Q_{\rho}})&=\rho^2,\\
    \text{height}(\Bar{Q_{\rho}})&=4\hspace{1mm} \text{height}({Q_{\rho}})=4\rho^2,\\
    \text{height}(\Bar{Q_{\rho}}^m)&=m\hspace{1mm}\text{height}(\Bar{Q_{\rho}})=4m\rho^2.
\end{align*}
To show (\ref{145}), it is enough to show
\begin{align*}
    -1+c_n^2-   \frac{m+1}{a_{k_1}^2}\sum_{j=k_2}^{k+1} \frac{1}{j^2} + \text{height}(\Bar{Q_{\rho}})+\text{height}(\Bar{Q_{\rho}}^m) \leq -1+c_n^2-   \frac{m+1}{a_{k_1}^2}\sum_{j=k_2}^{k} \frac{1}{j^2},
\end{align*}
which is the same as showing 
\begin{align*}
     \text{height}(\Bar{Q_{\rho}})+\text{height}(\Bar{Q_{\rho}}^m) \leq    \frac{m+1}{a_{k_1}^2} \frac{1}{(k+1)^2},
\end{align*}
which is the same as showing
\begin{align}\label{1452}
    4(m+1)\rho^2 \leq \frac{m+1}{a_{k_1}^2} \frac{1}{(k+1)^2}.
\end{align}
By Lemma \ref{decay}, (\ref{11}) implies
\begin{align*}
    \rho \leq \frac{c_n}{3a_{k_1}} a_{(k+1)m-1}.
\end{align*}
$\text{Using}\hspace{2mm} \underset{k\geq k_1}{\text{sup}}\frac{a_k}{a_{k+1}}\leq 2$, we get
\begin{align*}
    \rho \leq \frac{c_n}{a_{k_1}} a_{(k+1)m}.
\end{align*}
Since $a_k \leq \frac{1}{k}$, we have
\begin{align*}
    \rho \leq \frac{c_n}{a_{k_1}} \frac{1}{(k+1)m}.
\end{align*}
Using $4 c_n^2 =4 (10n)^{-2}\leq 1$, we get 
\begin{align*}
      4(m+1)\rho^2 \leq \frac{m+1}{a_{k_1}^2} \frac{1}{((k+1)m)^2}.
\end{align*}
Since $m \geq 1$, we get (\ref{1452}). Hence we get (\ref{145}).

Now we will show
\begin{align*}
     \tilde{u}>L^{km}\hspace{2mm}\text{in} \hspace{2mm}  \Bar{Q_{\rho}}^m.
\end{align*}
Since $Q_\rho(x_1,t_1) \subset K_1$ satisfies (\ref{11}), by Lemma \ref{decay}, we get $\rho < \frac{c_n}{3a_{k_1}} a_{(k+1)m-1}$.
Hence, we can apply Lemma \ref{prop} to get
\begin{align*}
    \tilde{u}>L^{(k+1)m-1} \hspace{2mm}\text{in} \hspace{2mm}  B_{3\rho}(x_1) \times (t_1,t_1+(c_n^{-1}\rho)^2-\rho^2) \cap \{-1<t<0\},
\end{align*}
where $B_{3\rho}(x_1):=\{x \in \R^n:|x-x_1|_{\infty}<3\rho\}$.

If \[\Bar{Q_{\rho}}^m \subset B_{3\rho}(x_1) \times (t_1,t_1+(c_n^{-1}\rho)^2-\rho^2) \cap \{-1<t<0\},\] then we are done.
If not, then note that (\ref{145}) gives 
\[D:=\{x:|x-x_1|_{\infty}<\rho\} \times (t_1+(c_n^{-1}\rho)^2-2\rho^2,t_1+(c_n^{-1}\rho)^2-\rho^2) \subset Q_1(0,0),\]
Also, we have 
\begin{align*}
    \rho &< \frac{c_n}{3a_{k_1}} a_{(k+1)m-1}\\
    & < \frac{c_n}{3a_{k_1}} a_{(k+1)m-2}\hspace{5mm} \text{(since $(a_k)$ is a decreasing sequence).}
\end{align*}
Now, apply Lemma \ref{prop} ( with $k=(k+1)m-2$ and cube $D$) to get
\begin{align*}
    \tilde{u}>L^{(k+1)m-2} \hspace{2mm}\text{in} \hspace{2mm} B_{3\rho}(x_1) \times (t_1+(c_n^{-1}\rho)^2-\rho^2,t_1+2[(c_n^{-1}\rho)^2-\rho^2]) \cap \{-1<t<0\}.
\end{align*}
Then, we have
\begin{align*}
    \tilde{u}>L^{(k+1)m-2} \hspace{2mm}\text{in} \hspace{2mm} B_{3\rho}(x_1) \times (t_1,t_1+2[(c_n^{-1}\rho)^2-\rho^2]).
\end{align*}

If $\Bar{Q_{\rho}}^m \subset B_{3\rho}(x_1) \times (t_1,t_1+2[(c_n^{-1}\rho)^2-\rho^2]) \cap \{-1<t<0\}$ then we are done.\\
If not, then continue like before and repeat the process.
Also, note that 
\begin{align*}
    m(c_n^{-2}-1)\rho^2 =m(100n^2-1) \geq 99m\rho^2 > 4(m+1)\rho^2.
\end{align*}
Therefore we get
\begin{align*}
    \Bar{Q_{\rho}}^m \subset B_{3\rho}(x_1) \times (t_1,t_1+j[(c_n^{-1}\rho)^2-\rho^2]) \cap \{-1<t<0\}
\end{align*}
for some $j \in \{1,2,3,...,m\}$ and
\begin{align*}
    \tilde{u}>L^{(k+1)m-j} \hspace{2mm}\text{in} \hspace{2mm} B_{3\rho}(x_1) \times (t_1,t_1+j[(c_n^{-1}\rho)^2-\rho^2]),
\end{align*}
which implies,
\begin{align*}
    \tilde{u}>L^{km} \hspace{2mm}\text{in} \hspace{2mm} B_{3\rho}(x_1)\times (t_1,t_1+j[(c_n^{-1}\rho)^2-\rho^2]).
\end{align*}
In particular, we have
\begin{align}\label{1453}
     \tilde{u}>L^{km}\hspace{2mm}\text{in} \hspace{2mm}  \Bar{Q_{\rho}}^m.
\end{align}
Thus, from (\ref{145}) and (\ref{1453}) we get $\Bar{Q_{\rho}}^m \subset A_k$.
Hence, by Lemma \ref{cald}, we get
\begin{align*}
    |A_{k+1}| &\leq (1-\mu)\frac{m+1}{m}|A_{k}|\\
    & \leq \big(1-\frac{\mu}{2}\big)|A_{k}|.
\end{align*}
Thus, for $k\geq k_2$, we have
\begin{align*}
     |A_k|  \leq \bigg(1-\frac{\mu}{2}\bigg)^{k-k_2}|A_{k_2}|.
\end{align*}
In particular, for any natural number $k$, we have
\begin{align*}
    \bigg|\bigg\{(x,t): \tilde{u}(x,t) > L^{km}, |x|_{\infty} < c_n \hspace{2mm} \& \hspace{2mm} -1 < t < -1+\frac{c_n^2}{2} \bigg\}\bigg| &\leq \bigg(1-\frac{\mu}{2}\bigg)^{k-k_2}|A_{k_2}|\\
    &\leq C_1\bigg(1-\frac{\mu}{2}\bigg)^{k},
\end{align*}
where $C_1$ is a universal constant.\\
This implies 
\begin{align*}
    |\{\tilde{u} >\tau\} \cap \hat{K}| \leq C_1\tau^{-{\epsilon}_1},
\end{align*}
where ${\epsilon}_1=-\frac{\text{ln}(1-\mu/2)}{m (\text{ln} L)}$.\\
Now, for  ${\epsilon}=-\frac{\text{ln}(1-\mu/2)}{2m (\text{ln} L)}$ we have
\begin{align*}
     \int_{\hat{K}}\tilde{u}^{\epsilon}dxdt&=\epsilon\int_0^{\infty}\tau^{\epsilon-1}|\{\tilde{u} >\tau\} \cap \hat{K}|d\tau\\
     &\leq \epsilon|\hat{K}|\int_0^{1}\tau^{\epsilon-1}d\tau+\epsilon\int_1^{\infty}\tau^{\epsilon-1}|\{\tilde{u} >\tau\} \cap \hat{K}|d\tau\\
     &\leq \epsilon|\hat{K}|\int_0^{1}\tau^{\epsilon-1}d\tau+\epsilon C_1\int_1^{\infty}\tau^{\epsilon-1}\tau^{-{\epsilon}_1}d\tau\\
     &\leq \epsilon|\hat{K}|\int_0^{1}\tau^{\epsilon-1}d\tau+\epsilon C_1\int_1^{\infty}\tau^{\frac{-{\epsilon}_1}{2}-1}d\tau\\
     &\leq C_2,
\end{align*}
where $C_2$ is a universal constant.\\
Thus, there exist universal constants $C$ (=max$\{C_1,C_2\}$) and $\epsilon >0$ such that 
\begin{align*}
    |\{\tilde{u} >\tau\} \cap \hat{K}| \leq C\tau^{-{\epsilon}}
\end{align*}
and
\begin{align*}
     \bigg(\int_{\hat{K}}\tilde{u}^{\epsilon}\bigg)^{\frac{1}{\epsilon}} \leq C.
 \end{align*}
\end{proof}

\begin{lemma}\label{super}
Let $u \in C(Q_2)$ be a solution of (\ref{meq1}) and (\ref{meq2}) with $u(0,0)=1$. Then, there exist universal constants $L_0$ and $\sigma$ such that for $\nu=\frac{L_0}{L_0-1/2}$, the following holds:\\
If \[(x_0,t_0) \in \{(x,t): |x|_{\infty}<c_n/2, -1+c_n^2/4< t < -1+c_n^2/2\}\] and $l$ is a natural number such that 
\begin{align*}
    \tilde{u}(x_0,t_0)\geq \nu^lL_0,
\end{align*}
where $\tilde{u}(x,t)=u(a_{k_1}x,a_{k_1}^2t)$.
Then, 
\begin{align*}
   \text{sup}_{Q_{r_l}(x_0,t_0)} \tilde{u}(x,t)>\nu^{l+1}L_0,
\end{align*}
where $r_l=\sigma \nu^{-(l+1)\frac{\epsilon}{n+2}}\big(\frac{L_0}{2}\big)^{-\frac{\epsilon}{n+2}} < 1$. Here, $\epsilon >0$ is a universal constant in Lemma \ref{wh}.
\end{lemma}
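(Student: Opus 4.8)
The plan is to adapt the parabolic point estimate of \cite{W1} (see also \cite{is}) to the present intrinsic setting, arguing by contradiction. Suppose the conclusion fails, i.e. $\sup_{Q_{r_l}(x_0,t_0)}\tilde u\le \nu^{l+1}L_0=:M$. Since $\tilde u$ is a viscosity subsolution of \eqref{meq2} and $P^-(-N)=-P^+(N)$, while $\tilde\phi$ depends only on $|D\tilde u|=|D(M-\tilde u)|$, the function $v:=M-\tilde u$ is a nonnegative viscosity supersolution of $P^-(D^2v)-v_t\le\tilde\phi(|Dv|)$ in $Q_{r_l}(x_0,t_0)$. The choice $\nu=\tfrac{L_0}{L_0-1/2}$ is precisely what gives $\nu^lL_0(\nu-1)=\tfrac{\nu^{l+1}}{2}$, so using the hypothesis $\tilde u(x_0,t_0)\ge\nu^lL_0$ one gets the basic bound $v(x_0,t_0)\le M-\nu^lL_0=\tfrac{\nu^{l+1}}{2}$. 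If $v$ vanishes identically on $Q_{r_l}(x_0,t_0)$ (equivalently $\tilde u$ is constant there and equal to its supremum $M$), the strong maximum principle forces $\tilde u\equiv M$ on its domain of definition, whence $\tilde u(0,0)=M>1$, contradicting $u(0,0)=1$; so one may assume $v\not\equiv 0$.

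Next I would rescale so as to be in a position to invoke the $L^\epsilon$-estimate of Lemma \ref{wh} (equivalently the basic measure estimate of Lemma \ref{measureuniform}). Writing $V(y,s):=v\big(x_0+\tfrac{r_l}{2}y,\ t_0+\tfrac{r_l^2}{4}s\big)/A$ with a normalising constant $A$ comparable to $\tfrac{\nu^{l+1}}{2}$ (a routine case distinction according to whether $v(x_0,t_0)$ is comparable to $\tfrac{\nu^{l+1}}{2}$ or much smaller lets one keep $A$ of this order), this maps $Q_{r_l}(x_0,t_0)$ onto $Q_2$ and $V\ge 0$. Since $v$ is, up to a translation and the additive constant $M$, a dilation of $u$ by the factor $\tfrac{r_l}{2}a_{k_1}$ normalised by $A$, a scaling argument as in Lemma \ref{scaling} shows $V$ is again a viscosity supersolution of \eqref{meq1} provided $\tfrac{r_l}{2}a_{k_1}$ is small relative to $A\approx\nu^{l+1}$. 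Because $\eta$ grows slower than any power (Lemma \ref{etaprop}(ii)), one has $r_A\gtrsim A^{-\gamma}$ for arbitrarily small $\gamma>0$, so this smallness is achieved exactly by a radius of the form $r_l=\sigma\,(\nu^{l+1}L_0/2)^{-\epsilon/(n+2)}$ with $\sigma$ a sufficiently small universal constant — this is the origin both of the exponent $\epsilon/(n+2)$ and of the dependence of $r_l$ on $l$ and $L_0$. Applying Lemma \ref{wh} to $V$ then yields the decay $|\{V>\tau\}\cap\hat K|\le C\tau^{-\epsilon}$; rescaling this back and combining it with $v(x_0,t_0)\le\tfrac{\nu^{l+1}}{2}$, I would conclude that outside a small, universally controlled fraction of a parabolic subcube $Q_{\rho_l}$ of $Q_{r_l}(x_0,t_0)$ near its initial time slice, $\tilde u$ stays above a level comparable to $\nu^{l+1}L_0$ — after fixing the universal constant $L_0$ large, above $L^{k}$ for an integer $k$ with $L^{k}$ comparable to $\nu^{l+1}L_0$, where additionally the $\mu$-fraction produced by the measure estimate is boosted to a $(1-\mu)$-fraction via Lemma \ref{cald}.

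Finally, I would feed this into the propagation machinery. By the calibration of $r_l$ together with the slow growth of $\eta$, the radius $\rho_l\sim r_l$ is small enough that $\rho_l\le c_n a_k/a_{k_1}$, so Lemma \ref{prop} applies to $Q_{\rho_l}$ at level $k+1$ and, through the tailored stack of cubes, propagates $\tilde u>L^{k-i}$ forward in time onto $\widetilde Q^i(Q_{\rho_l},k+1)$ for $0\le i<k-k_1$. Playing the resulting pointwise lower bounds against the $L^\epsilon$-bound $\int_{\hat K}\tilde u^\epsilon\le C$ of Lemma \ref{wh} — equivalently, carrying the propagation up the stack to the line $\{x=0\}$ until a member of the family contains $(0,0)$ and invoking $\tilde u(0,0)=u(0,0)=1$ — produces the required contradiction, proving $\sup_{Q_{r_l}(x_0,t_0)}\tilde u>\nu^{l+1}L_0$.

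The delicate point, and the main obstacle, is the middle step: one must control the altered nonlinearity through a rescaling whose scale $r_l$ is forced to shrink with $l$ (because the amplitude of $v$ is of order $\nu^{l+1}$), while simultaneously keeping $\rho_l$ small enough for the propagation lemma to fire at the level $\sim\nu^{l+1}L_0$. It is the competition between the exponential factor $\nu^{l+1}$ and the sub‑polynomial growth of $\eta$, quantified in Lemma \ref{etaprop} and Lemma \ref{scaling}, that dictates the precise form of $r_l$ and forces $L_0$ and $1/\sigma$ to be chosen large; the remaining ingredients (the measure bookkeeping under rescaling and the geometry of the stack of cubes) are then routine given Lemmas \ref{measureuniform}–\ref{wh}.
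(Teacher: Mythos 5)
Your setup is on the right track: argue by contradiction, observe that flipping $\tilde u$ about the presumed supremum $\nu^{l+1}L_0$ produces a nonnegative supersolution, and recognize that the scaling constraint in Lemma \ref{scaling}, together with the sub-polynomial growth of $\eta$ from Lemma \ref{etaprop}(ii), is what forces the power $\nu^{-(l+1)\epsilon/(n+2)}$ in $r_l$. These ingredients match the paper. But the closing argument goes off the paper's route, and the detour is where the gaps are.

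First, the ``routine case distinction'' on the size of $v(x_0,t_0)$ is unnecessary and is circumvented by the paper's specific affine normalization
\begin{align*}
v(x,t)=\frac{\nu}{\nu-1}-\frac{\tilde u(r_l x+x_0, r_l^2 t+t_0)}{(\nu-1)\nu^l L_0},
\end{align*}
which is rigged precisely so that the contradiction hypothesis $\sup_{Q_{r_l}}\tilde u\le\nu^{l+1}L_0$ gives $v\ge 0$ and the lemma's hypothesis $\tilde u(x_0,t_0)\ge\nu^l L_0$ gives $v(0,0)\le 1$. That is exactly the input that the basic measure estimate (Lemma \ref{measureuniform}) requires; one applies \emph{that} lemma to $v$, not the $L^\epsilon$ estimate.

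Second, and more seriously, the endgame should not pass back through the propagation machinery of Lemma \ref{prop} and the stack of cubes. The paper closes with a direct volume count: Lemma \ref{measureuniform} applied to $v$, rescaled, yields $|\{\tilde u>\nu^{l+1}L_0/2\}\cap G|\ge\mu|G|$ for the fixed box $G$ of volume comparable to $c_n^{n+2}r_l^{n+2}$; Lemma \ref{wh} applied to $\tilde u$ \emph{itself} (not to the flipped function) gives $|\{\tilde u>\nu^{l+1}L_0/2\}\cap\hat K|\le C\bigl(\nu^{l+1}L_0/2\bigr)^{-\epsilon}$; since $G\subset\hat K$, comparing the two produces $\mu\,c_n^{n+2}\sigma^{n+2}\bigl(\nu^{l+1}L_0/2\bigr)^{-\epsilon}\lesssim C\bigl(\nu^{l+1}L_0/2\bigr)^{-\epsilon}$, i.e.\ $\sigma^{n+2}\le C/(\mu c_n^{n+2})$, contradicting the choice $\sigma^{n+2}>C/(\mu c_n^{n+2})$ fixed at the outset. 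Your proposed route --- boosting the $\mu$-fraction to a $(1-\mu)$-density ``via Lemma \ref{cald}'' and then firing Lemma \ref{prop} up the stack to $(0,0)$ --- has genuine holes: Lemma \ref{cald} is a covering lemma and does not, on its own, upgrade a $\mu$-lower bound on a box into a $(1-\mu)$-density on a dyadic subcube in the form Lemma \ref{prop} needs; and carrying the propagation to the origin would essentially re-derive Lemma \ref{decay}, with no clear guarantee the stack reaches $(0,0)$ at a level exceeding $1$. The insight you are missing is that the already-proved $L^\epsilon$ estimate is to be used directly on $\tilde u$ over $\hat K$, and the size of the box $G$ measured against the tail $\tau^{-\epsilon}$ is simultaneously what dictates the exact form of $r_l$ and what delivers the contradiction, with no further covering or propagation required.
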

\begin{proof}
Choose $\sigma$  such that $\sigma^{n+2} > \frac{C}{\mu c_n^{n+2}}$, where $C$ is a constant as in Lemma \ref{wh}.

Assume, on the contrary, $\text{sup}_{Q_{r_l}}\tilde{u}(x,t)\leq \nu^{l+1}L_0$. We will choose the universal constant $L_0$ later. We define
\begin{align*}
    G=\{(x,t):|x-x_0|_{\infty}<r_l c_n,t_0-r_l^2<t<t_0+r_l^2(c_n^2-1))\}.
\end{align*}
\begin{center}
    \begin{tikzpicture}[scale=0.8];
    \draw (2,-0.25) node{$(x_0,t_0-r_l^2)$};
    \draw (-0.5,0) rectangle (4.5,4);
    \draw (2,3) node{$Q_{r_l}$};
    \draw (1,0) rectangle (3,2);
    \fill[color=gray!20] (1,0) rectangle (3,2);
    \draw (2,1) node{$G$};
    \draw (2,4) node{\tiny{$\bullet$}};
    \draw (2,4.25) node{$(x_0,t_0)$};
   \draw (3.5,1.0) node{$(r_l c_n)^2$};
    \draw[->] (3.2,1.2)--(3.2,2);
    \draw[<-] (3.2,0)--(3.2,.8);
    \draw (2.6,2.3) node{$r_l c_n$};
    \draw[<->] (2,2.1)--(3,2.1);
    \draw[<-] (2,3.8)--(3,3.8);
    \draw[->] (3.5,3.8)--(4.5,3.8);
    \draw (3.3,3.8) node{$r_l$};
    \draw [<-] (4.7,0)--(4.7,1.8);
    \draw [->] (4.7,2.2)--(4.7,4);
    \draw (4.75,2) node{$r_l^2$};
    \draw (2,0) node{\tiny{$\bullet$}};
    \end{tikzpicture}
\end{center}
Now, we will estimate measure of $G$ using $L^{\epsilon}$-estimate and basic measure estimate.\\
First, we will estimate
\begin{align*}
    \bigg|\bigg\{(x,t)\in G:\tilde{u}(x,t)\leq\nu^{l+1}\frac{L_0}{2}\bigg\}\bigg|.
\end{align*}
Define $v:Q_1\rightarrow\R$ as 
\begin{align*}
    v(x,t)&=\frac{\nu}{\nu-1}-\frac{\tilde{u}(r_lx+x_0,r_l^2t+t_0)}{(\nu-1)\nu^lL_0}\\
&=\frac{\nu}{\nu-1}-\frac{u(a_{k_1}r_lx+a_{k_1}x_0,a_{k_1}^2r_l^2t+a_{k_1}^2t_0)}{(\nu-1)\nu^lL_0}
\end{align*}
Now we have to show that $v$ is a supersolution. By Lemma \ref{scaling}, we will be done if
\begin{align*}
    a_{k_1}r_l\leq \frac{1}{L_2\eta((\nu-1)\nu^lL_0)+L_2}.
\end{align*}
Since $a_{k_1}\leq1$, we will be done if
\begin{align*}
    r_l\leq \frac{1}{L_2\eta((\nu-1)\nu^lL_0)+L_2}.
\end{align*}
Since $\eta(t)\geq 1$ for all $t$, we will be done if
\begin{align*}
    2L_2\eta((\nu-1)\nu^lL_0)&\leq r_l^{-1},\\
    \text{i.e.,} \hspace{2mm} 2L_2\eta((\nu-1)\nu^lL_0)&\leq {\sigma}^{-1} \nu^{(l+1)\frac{\epsilon}{n+2}}\big(\frac{L_0}{2}\big)^{\frac{\epsilon}{n+2}}
\end{align*}
Since $\eta(st)\leq {\Lambda}_0\eta(s)\eta(t)$ so we will be done if
\begin{align*}
    2L_2{\Lambda}_0\eta({\nu}^l)\eta((\nu-1)L_0)&\leq {\sigma}^{-1} \nu^{(l+1)\frac{\epsilon}{n+2}}\big(\frac{L_0}{2}\big)^{\frac{\epsilon}{n+2}}\\
    \text{i.e.,}\frac{2L_2{\Lambda}_0\eta((\nu-1)L_0)}{{\nu}^\frac{\epsilon}{n+2}\big(\frac{L_0}{2}\big)^{\frac{\epsilon}{n+2}}}\frac{\eta({\nu}^l)}{\nu^{l\frac{\epsilon}{n+2}}}&\leq {\sigma}^{-1}.
\end{align*}
Now from part (ii) of Lemma \ref{etaprop}, we have $\underset{t\rightarrow\infty}{\text{lim}}\frac{\eta(t)}{t^{\frac{\epsilon}{n+2}}}=0$, which implies that for all $t\geq 1$, we have $\frac{\eta(t)}{t^{\frac{\epsilon}{n+2}}} \leq C_1$ for some universal constant $C_1$.\\
Also, for $L_0 \geq 2$, we have $1<\nu<\frac{3}{2}$ and $\frac{1}{2}<(\nu-1)L_0<\frac{3}{4}$.\\
We will be done if 
\begin{align*}
    \frac{C_2}{\big(\frac{L_0}{2}\big)^{\frac{\epsilon}{n+2}}} \leq {\sigma}^{-1},
\end{align*}
where $C_2$ is universal constant.\\
We will be done if $L_0 \geq \text{max}\{L,2\}$ is  large enough to satisfy above inequality; but $L_0$ is in our hand, so we are done. Also, We will choose $L_0$ large enough to make sure that $r_l < 1$.
Thus $v$ is a supersolution in $Q_1$.
Note that $\tilde{u} \leq \nu^{l+1}L_0$ in $Q_{r_l}$, which implies $v \geq 0.$ Also, $\tilde{u}(x_0,t_0) \geq \nu^l L_0$ implies $v(0,0) \leq 1$.\\
So, by Lemma \ref{measureuniform},
 we have
 \begin{align*}
     |\{v>L\}\cap K_1| &\leq (1-\mu)|K_1|.
 \end{align*}
 Since $L_0>L$, above inequality in particular implies, 
  \begin{align*}
     |\{v\geq L_0\}\cap K_1| &\leq (1-\mu)|K_1|,
 \end{align*}
 which by rescaling and the fact that, $\tilde{u} \leq \nu^{l+1}\frac{L_0}{2}$ iff $v\geq L_0$, gives 
 \begin{align*}
     |\{\tilde{u}\leq \nu^{l+1}\frac{L_0}{2}\} \cap G| \leq (1-\mu)|G|.
 \end{align*}
 Now, by Lemma \ref{wh}
 we have
 \begin{align*}
    \bigg|\bigg\{(x,t)\in G:\tilde{u}(x,t) > \nu^{l+1}\frac{L_0}{2}\bigg\}\bigg| \leq C\bigg(\nu^{l+1}\frac{L_0}{2}\bigg)^{-\epsilon}.
\end{align*}
Hence,
\begin{align*}
    |G| \leq  C\bigg(\nu^{l+1}\frac{L_0}{2}\bigg)^{-\epsilon} +  (1-\mu)|G|,
\end{align*}
which implies 
\begin{align*}
   \mu |G| &\leq  C\bigg(\nu^{l+1}\frac{L_0}{2}\bigg)^{-\epsilon},\\
   \text{i.e.,}\hspace{2mm}\mu\sigma^{n+2}\nu^{-(l+1)\epsilon}\bigg(\frac{L_0}{2}\bigg)^{-\epsilon} c_n^{n+2} &\leq C\bigg(\nu^{l+1}\frac{L_0}{2}\bigg)^{-\epsilon}.
\end{align*}
Then, we have
\begin{align*}
    \sigma^{n+2} \leq \frac{C}{\mu c_n^{n+2}},
\end{align*}
which is a contradiction to $\sigma^{n+2} > \frac{C}{\mu c_n^{n+2}}$.
This proves the lemma.
\end{proof}
\begin{lemma}\label{final}
Let $u \in C(Q_2)$ be a solution of (\ref{meq1}) and (\ref{meq2}) with $u(0,0)=1$. Then, there exists a universal constant $l_0 $ such that 
\begin{align}\label{aaa}
    \text{sup}_A \tilde{u}(x,t) \leq \nu^{l_0}L_0,
\end{align}
where $L_0$ is universal constant from Lemma \ref{super}, $\tilde{u}(x,t)=u(a_{k_1}x,a_{k_1}^2t)$, and\\ $A=\Big\{(x,t):|x|_{\infty} \leq \frac{c_n}{2}, -1+ \frac{c_n^2}{4} \leq t \leq -1+\frac{c_n^2}{2}\Big\}$.
\end{lemma}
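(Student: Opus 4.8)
The plan is to argue by contradiction, running the standard iteration of the growth estimate of Lemma \ref{super} along a chain of points which, because the radii $r_l$ decay geometrically in $l$, stays confined to a fixed compact subset of $Q_2$; the continuity, hence local boundedness, of $\tilde u$ on that set then yields a contradiction.

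The quantitative input I would record first is that
\[
r_l = \sigma\,\nu^{-(l+1)\frac{\epsilon}{n+2}}\,(L_0/2)^{-\frac{\epsilon}{n+2}},
\]
so that $r_{l+1}/r_l = \nu^{-\epsilon/(n+2)} =: \theta$, and $\theta\in(0,1)$ since $\nu = L_0/(L_0-1/2) > 1$ and $\epsilon>0$. Hence for any level $l_0$, $\sum_{j\ge0} r_{l_0+j} = r_{l_0}/(1-\theta)$ and $\sum_{j\ge0} r_{l_0+j}^2 \le r_{l_0}^2/(1-\theta^2)$, and both sums tend to $0$ as $l_0\to\infty$ because $r_{l_0} = \sigma(L_0/2)^{-\epsilon/(n+2)}\theta^{l_0+1}\to 0$. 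I would then fix, once and for all, a natural number $l_0$ — universal, since it depends only on $\epsilon,\nu,\sigma,L_0,c_n,n$ — large enough that $S := \sum_{j\ge0} r_{l_0+j}$ and $S' := \sum_{j\ge0} r_{l_0+j}^2$ are so small that (i) every $(x,t)$ within spatial $\ell^\infty$-distance $S$ and temporal distance $S'$ of $A$ lies in a fixed compact set $\mathcal K\subset Q_2$, and (ii) every such point is still an admissible base point for Lemma \ref{super}. For (ii) I would observe that the proof of Lemma \ref{super} uses the location of $(x_0,t_0)$ only to guarantee that the auxiliary cube $G = \{|x-x_0|_\infty < r_l c_n,\ t_0 - r_l^2 < t < t_0 + r_l^2(c_n^2-1)\}$ lies in $\hat K$ (so that the $L^\epsilon$-bound of Lemma \ref{wh} applies on $G$), and this holds once $r_{l_0}$ together with the accumulated drift are small compared with $c_n$.

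With $l_0$ so chosen, I would suppose for contradiction that $\sup_A \tilde u > \nu^{l_0}L_0$. Since $\{\tilde u > \nu^{l_0}L_0\}$ is open and meets $A$, which is the closure of the open box appearing in Lemma \ref{super}, it meets that box, so I may pick $(x_0,t_0)$ there with $\tilde u(x_0,t_0)\ge\nu^{l_0}L_0$. Applying Lemma \ref{super} with $l=l_0$ gives $(x_1,t_1)\in Q_{r_{l_0}}(x_0,t_0)$ with $\tilde u(x_1,t_1) > \nu^{l_0+1}L_0$; by the choice of $l_0$ this point is again admissible for Lemma \ref{super}, so I iterate, obtaining points $(x_j,t_j)$ with $\tilde u(x_j,t_j)\ge\nu^{l_0+j}L_0$ and $(x_{j+1},t_{j+1})\in Q_{r_{l_0+j}}(x_j,t_j)$. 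Then $|x_j-x_0|_\infty\le S$ and $t_0-S'\le t_j\le t_0$, so $(x_j,t_j)\in\mathcal K$ for all $j$; but $\tilde u$ is a real-valued continuous function on the compact set $\mathcal K$, hence bounded there, contradicting $\tilde u(x_j,t_j)\ge\nu^{l_0+j}L_0\to\infty$. This forces $\sup_A \tilde u \le \nu^{l_0}L_0$, which is \eqref{aaa}.

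The main obstacle is step (ii): one must check that the cumulative displacement of the chain stays strictly inside the margin by which a base point may drift and still be legitimate for Lemma \ref{super}, and — the delicate point — that this margin is controlled purely by universal constants, so that a single universal $l_0$ works uniformly over every admissible $u$. What makes this go through is precisely the exponential decay of $r_l$ in $l$ with the universal ratio $\theta=\nu^{-\epsilon/(n+2)}<1$, which turns the displacements into a convergent geometric series whose sum is driven below any prescribed threshold by enlarging $l_0$.
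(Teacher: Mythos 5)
Your proposal is correct and follows essentially the same argument as the paper: fix $l_0$ universal so that the geometric series $\sum_{j\ge l_0} r_j$ and $\sum_{j\ge l_0} r_j^2$ are small, then argue by contradiction, iterating Lemma \ref{super} to produce a chain $(x_j,t_j)$ on which $\tilde u \ge \nu^{l_0+j}L_0$ while the chain is confined (by the smallness of the accumulated drift) to a compact subset of $Q_2$, contradicting continuity. If anything you are a bit more careful than the paper in flagging why the iterated points remain admissible base points for Lemma \ref{super} (namely that the auxiliary cube $G$ must stay inside $\hat K$) and in passing from the closed box $A$ to the open box in Lemma \ref{super}'s hypothesis, both of which the paper glosses over.
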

\begin{center}
    \begin{tikzpicture}[scale=1.35]
    \draw (0,0) rectangle (4,4);
    \draw (1,0) rectangle (3,2);
    \draw (1.5,1) rectangle (2.5,2);
    \fill[color=gray!10] (1.5,1) rectangle (2.5,2);
    \draw (2,1.5) node{$A$};
    \draw (2,4) node{\tiny{$\bullet$}};
    \draw (2,4.25) node{$(0,0)$};
    \draw (2,-0.25) node{$(0,-1)$};
    \draw (2,0) node{\tiny{$\bullet$}};
    \draw[<-] (1.5,0.8)--(1.85,0.8);
    \draw[->] (2.1,0.8)--(2.5,0.8);
    \draw (2,0.75) node{$c_n$};
    \draw[<->] (2.7,1)--(2.7,2);
    \draw (2.85,1.6) node{$\frac{c_n^2}{4}$};
    \draw[<-] (1,0.2)--(1.75,0.2);
    \draw[->] (2.2,0.2)--(3,0.2);
    \draw (2,0.2) node{$2c_n$};
    \draw[<-] (3.2,0)--(3.2,0.6);
    \draw[->] (3.2,1.4)--(3.2,2);
    \draw (3.2,1) node{$\frac{c_n^2}{2}$};
    \end{tikzpicture}
\end{center}
\begin{proof}
Choose $l_0$ such that 
\begin{align*}
    \sum_{j=l_0}^{\infty}r_j \leq \frac{c_n}{2} \hspace{3mm}\text{and} \hspace{3mm} \sum_{j=l_0}^{\infty}r_j^2 \leq \frac{c_n^2}{8},
\end{align*}
where $r_j$ is as in Lemma \ref{super}. We can choose such an $l_0$ because $\nu >1$.\\
Assume (\ref{aaa}) is not true. Then, there exists $(x_{l_0},t_{l_0}) \in A$ such that
\begin{align*}
    \tilde{u}(x_{l_0},t_{l_0}) > \nu^{l_0}L_0.
\end{align*}
  By Lemma \ref{super}, there exists $(x_{l_0+1},t_{l_0+1}) \in Q_{r_{l_0}}(x_{l_0},t_{l_0})$ such that
  \begin{align*}
    \tilde{u}(x_{l_0+1},t_{l_0+1}) > \nu^{l_0+1}L_0,
\end{align*}
$|x_{l_0+1}-x_{l_0}|_{\infty} <r_{l_0}$ and $t_{l_0}\geq t_{l_0+1} > t_{l_0}-r_{l_0}^2$.\\
By repeating this process we get a sequence $(x_{l},t_{l})$ for $l >l_0$ such that
\begin{align}\label{12}
    \tilde{u}(x_{l},t_{l}) > \nu^{l}L_0,
\end{align}
$|x_{l+1}-x_{l}|_{\infty} <r_{l}$ and $t_{l} \geq t_{l+1} > t_{l}-r_{l}^2$.\\
Note that, for $l>l_0$,
\begin{align*}
    |x_{l}|_{\infty} &\leq |x_{l_0}|_{\infty} +\sum_{j=l_0}^{l-1}|x_{j+1}-x_{j}|_{\infty}\\
    &\leq \frac{c_n}{2} +\sum_{j=l_0}^{\infty}r_j\\ &\leq \frac{c_n}{2} + \frac{c_n}{2}\\
    &=c_n.
\end{align*}
Also,
\begin{align*}
    t_{l_0} \geq t_l &\geq t_{l_0} -\sum_{j=l_0}^{l-1}r_j^2\\ &\geq t_{l_0} -\sum_{j=l_0}^{l-1}r_j^2\\ &\geq -1 + \frac{c_n^2}{4} -\frac{c_n^2}{8}\\ &\geq -1+\frac{c_n^2}{8}.
\end{align*}
Thus, $(x_{l},t_{l})$ is a bounded sequence in $K_1$. Now, by continuity of $u$ in $Q_2$, $\tilde{u}(x_{l},t_{l})$ is a bounded sequence, which is a contradiction to (\ref{12}). This proves the lemma.
\end{proof}

Now, we will prove our main theorem Theorem \ref{mthm}, which is a direct consequence of the previous lemma.
\begin{proof}[Proof of Theorem \ref{mthm}]
Let $u \in C(Q_2)$ be a solution of (\ref{eq1}) and (\ref{eq2}).
Define
\begin{align*}
    v(x,t):=\frac{u(a_0x,a_0^2t)}{u(0,0)}.
\end{align*}
Then, by Lemma \ref{scaling}, $v$ is a solution of (\ref{eq1}) and (\ref{eq2}).\\
Now, define 
\begin{align*}
    v_1(x,t):=v(r_1x,r_1^2t).
\end{align*}
Then, $v_1$ is a solution of (\ref{meq1}) and (\ref{meq2}) with $v_1(0,0)=1$.\\
Then, by Lemma \ref{final}, we get
\begin{align*}
    \text{sup}_A \tilde{v_1}(x,t) \leq C,
\end{align*}
where $C=\nu^{l_0}L_0$ is a universal constant,  $\tilde{v_1}(x,t)=v_1(a_{k_1}x,a_{k_1}^2t)$, and\\ $A=\Big\{(x,t):|x|_{\infty} \leq \frac{c_n}{2}, -1+ \frac{c_n^2}{4} \leq t \leq -1+\frac{c_n^2}{2}\Big\}$.\\
Hence, we have
\begin{align*}
    \text{sup}_A u(a_0 a_{k_1} r_1 x,(a_0 a_{k_1} r_1)^2t) \leq C u(0,0),
\end{align*}
i.e., 
\begin{align*}
    \text{sup}_A u(a_0 \gamma x,(a_0 \gamma)^2 t) \leq C u(0,0),
\end{align*}
where $\gamma=a_{k_1} r_1$.
This proves the result.
\end{proof}

\section{Appendix}\label{a}


\begin{proof}[Proof of Lemma \ref{barrier}]
 We will work with the same barrier function which is constructed in \cite{is}. In our case, we have to take care of nonlinearity as well. So, here we will do calculations more carefully. Our idea of scheme is to make $P^+(D^2h)-h_t$ strictly negative instead of nonpositive as in \cite{is}. With the help of this  negative quantity and favourable scaling we will take care of nonlinearity.\\ First we will define $h$ in the region
\begin{align*}
  D= \bigg\{(x,t):|x|<1 \hspace{2mm}\&\hspace{2mm} \frac{c_n^2}{36n}\leq t \leq \frac{1}{36n} \bigg\} 
\end{align*}
as
\begin{align*}
   h(x,t)=t^{-p}H\Big(\frac{x}{\sqrt{t}}\Big),
\end{align*}
where $H:\R^n\rightarrow\R$ is defined as
\[H(y)=\begin{cases}
    \text{some smooth and bounded function between $-((6\sqrt{n})^q(2^q-1)^{-1})$ and $-1$} &\quad \text{if}\hspace{1.5mm} |y|\leq3\sqrt{n},\\
    ((6\sqrt{n})^{-q}-|y|^{-q}) &\quad \text{if}\hspace{1.5mm}3\sqrt{n}\leq|y|\leq6\sqrt{n},\\
    0 &\quad \text{if}\hspace{1.5mm}|y|\geq6\sqrt{n}.
    \end{cases}
    \]
To calculate $P^+(D^2h)-h_t$, we will do some calculations:
\begin{align*}
    h_i&=t^{-p}H_i\bigg(\frac{x}{\sqrt{t}}\bigg);
     \hspace{5mm} h_{ij}=t^{-p}H_{ij}\bigg(\frac{x}{\sqrt{t}}\bigg)\frac{1}{t};\\
    h_t&=-pt^{-p-1}H\bigg(\frac{x}{\sqrt{t}}\bigg)+t^{-p}\nabla H\bigg(\frac{x}{\sqrt{t}}\bigg)\cdot\frac{x}{t^{-3/2}}\bigg(-\frac{1}{2}\bigg)\\
    &=-p t^{-p-1}H\bigg(\frac{x}{\sqrt{t}}\bigg)-\frac{1}{2}t^{-p-1}\nabla H\bigg(\frac{x}{\sqrt{t}}\bigg)\cdot\frac{x}{\sqrt{t}}.
\end{align*}
Here, $h_i$ represents the partial derivative of $h$ with respect to $x_i$.\\
Thus, we have
\begin{align}\label{H}
    P^+(D^2h)-h_t=t^{-p-1}P^+\bigg(D^2H\bigg(\frac{x}{\sqrt{t}}\bigg)\bigg)+p t^{-p-1}H\bigg(\frac{x}{\sqrt{t}}\bigg)+\frac{1}{2}t^{-p-1}\nabla H\bigg(\frac{x}{\sqrt{t}}\bigg)\cdot\frac{x}{\sqrt{t}}.
\end{align}
We will calculate the derivative of $H$ in the region $ \{y\in \R^n:3\sqrt{n} \leq |y| \leq 6\sqrt{n}\}$ as
\begin{align*}
    H_{i}(y)&=q|y|^{-q-1}\frac{y_i}{|y|}=q|y|^{-q-2}{y_i},\\
    H_{ij}(y)&=-q(q+2)|y|^{-q-4}{y_i}{y_j}+q|y|^{-q-2} \delta_{ij}.
\end{align*}
Hence, in the region $ \{y \in \R^n:3\sqrt{n} \leq |y| \leq 6\sqrt{n}\}$, eigenvalues of $D^2H(y)$ are $q|y|^{-q-2}$ with multiplicity $n-1$ and $-q(q+1)|y|^{-q-2}$ with multiplicity 1 so,
\begin{align*}
    P^+(D^2H(y))&=q(\Lambda(n-1)-\lambda(q+1))|y|^{-q-2}.
\end{align*}
Then, we have 
\begin{align}\label{star}
     P^+(D^2h)=q(\Lambda(n-1)-\lambda(q+1))t^{-p-1}\bigg(\frac{|x|}{\sqrt{t}}\bigg)^{-q-2}.
\end{align}
Note that
 \begin{align*}
   \nabla H\bigg(\frac{x}{\sqrt{t}}\bigg)\cdot\frac{x}{\sqrt{t}}&=q \bigg(\frac{|x|}{\sqrt{t}}\bigg)^{-q}.
\end{align*}
Thus, in the region $\Big\{(x,t)\in D:3\sqrt{n}\leq\frac{|x|}{\sqrt{t}}\leq 6\sqrt{n}\Big\}$, we have 
\begin{align*}
    P^+(D^2h)-h_t\leq q(\Lambda(n-1)-\lambda(q+1))t^{-p-1}\bigg(\frac{|x|}{\sqrt{t}}\bigg)^{-q-2}+p t^{-p-1}H\bigg(\frac{|x|}{\sqrt{t}}\bigg)+\frac{q}{2}t^{-p-1} \bigg(\frac{|x|}{\sqrt{t}}\bigg)^{-q}.
\end{align*}
By definition  $H(y)\leq 0$ for $y \in \R^n$. So, we have
\begin{align*}
    P^+(D^2h)-h_t\leq q(\Lambda(n-1)-\lambda(q+1))t^{-p-1}\bigg(\frac{|x|}{\sqrt{t}}\bigg)^{-q-2}+\frac{q}{2}t^{-p-1} \bigg(\frac{|x|}{\sqrt{t}}\bigg)^{-q}.
\end{align*}
Using $\frac{|x|}{\sqrt{t}} \le 6\sqrt{n}$, we get
\begin{align*}
    P^+(D^2h)-h_t\leq q(\Lambda(n-1)-\lambda(q+1) +18n)t^{-p-1}\bigg(\frac{|x|}{\sqrt{t}}\bigg)^{-q-2}.
\end{align*}
Choose $q$ large enough such that $\Lambda(n-1)-\lambda(q+1) +18n \leq -1$. Then, we have
\begin{align}\label{112}
    P^+(D^2h)-h_t\leq -qt^{-p-1}\bigg(\frac{|x|}{\sqrt{t}}\bigg)^{-q-2}
\end{align}
in $\Big\{(x,t)\in D:3\sqrt{n}\leq\frac{|x|}{\sqrt{t}}\leq 6\sqrt{n}\Big\}$.\\
\\
Now we will estimate $ P^+(D^2h)-h_t$ in the region $\Big\{(x,t)\in D:\frac{|x|}{\sqrt{t}}\leq 3\sqrt{n}\Big\}$.\\ 
Note that $H(y)$ is smooth in $|y|\leq3\sqrt{n}$, so there exists $M$ such that 
\begin{align*}
    P^+\Big(D^2H\Big(\frac{x}{\sqrt{t}}\Big)\Big)+\frac{1}{2}\nabla H\Big(\frac{x}{\sqrt{t}}\Big)\cdot\frac{x}{\sqrt{t}} \leq M.
\end{align*}
 Choose large $p$ such that $-p+M\leq-1$. Then, from (\ref{H}) and $H(y) \le -1$ in $|y| \le 3 \sqrt{n}$, we have 
 \begin{align*}
      P^+(D^2h)-h_t\leq -t^{-p-1} \hspace{2mm} \text{in} \hspace{2mm} \Big\{(x,t)\in D:\frac{|x|}{\sqrt{t}}\leq 3\sqrt{n}\Big\}. 
 \end{align*}
 Thus, in $D \cap \{(x,t):h(x,t)\not=0\}$, we get
 \begin{align}\label{113}
      P^+(D^2h)-h_t\leq -t^{-p-1}. 
 \end{align}
Now we will extend the definition of $h$ in $\{(x,t):|x|\leq 1,1/(36n) \leq t \leq 1\} $  as
\begin{align*}
    h(x,t)=e^{(\kappa (t-{1/{36n}}))}h(x,1/36n),
\end{align*}
where $\kappa$ is chosen such that 
$\underset{\{|x|<1\}}{\text{inf}}\frac{P^+(D^2h(x,1/36n))}{h(x,1/36n)}\geq \kappa+1$.
(We can choose $\kappa <0$, i.e., inf exists because 
\begin{itemize}
    \item For $|x| \leq \frac{1}{2}$, $h(x,1/36n) \leq -(36n)^p$ and $h$ is smooth so $P^+(D^2h(x,1/36n))$ is bounded. Hence, $\frac{P^+(D^2h(x,1/36n))}{h(x,1/36n)}$ is bounded below.
    \item For $1>|x| \geq \frac{1}{2}$, from (\ref{star}), we have $P^+(D^2h(x,1/36n)) < 0$ and by definition $h < 0$ so $\frac{P^+(D^2h(x,1/36n))}{h(x,1/36n)}$ is bounded below by $0$.)
\end{itemize}
Now, in $\{(x,t):|x|\leq 1,1/(36n) \leq t \leq 1\} $, 
\begin{align*}
    P^+(D^2h)-h_t=e^{(\kappa (t-{1/{36n}}))}(P^+(D^2h)-\kappa h(x,1/36n)).
\end{align*}
Take $m >0$ such that $-m=\underset{\{{1/2}<|x|<1\}}{\text{min}}P^+(D^2h(x,1/36n))$. Note that $h(x,1/36n)=0$ for $|x|=1$. Hence, by continuity of $h$, we can choose $\delta >0$ such that  \[-m-\kappa h(x,1/36n) \leq \frac{-m}{2}\] in $\{1-\delta <|x| \le 1\}$,
which gives us 
\begin{align}
    P^+(D^2h)-h_t\leq \frac{-m}{2}e^{(\kappa (t-{1/{36n}}))} \hspace{2mm} \text{in} \hspace{2mm} \{(x,t):1-\delta < |x| \le 1,1/(36n) \leq t \leq 1\}.
\end{align}
Now, using the definition of $\kappa$, we have
\begin{align*}
    P^+(D^2h)-h_t&=e^{(\kappa (t-{1/{36n}}))}(P^+(D^2h)-\kappa h(x,1/36n))\\
    &\leq e^{\kappa (t-{1/{36n}})} h(x,1/36n).
\end{align*}
Thus, in $\{(x,t):1-\delta < |x| \le 1,1/(36n) \leq t \leq 1\}$, we have 
\begin{align*}
    P^+(D^2h)-h_t \leq e^{\kappa (t-{1/{36n}})} \underset{\{|x| \leq 1-\delta\}}{\text{max}}h(x,1/36n).
\end{align*}

Thus, in $\{(x,t):|x|\leq 1,1/(36n) \leq t \leq 1\} $, we have
\begin{align}\label{114}
    P^+(D^2h)-h_t &\leq \text{max}\{-m/2,\underset{\{|x| \leq 1-\delta\}}{\text{max}}h(x,1/36n)\}e^{\kappa (t-{1/{36n}})}\\
    &\leq \text{max}\{-m/2,\underset{\{|x| \leq 1-\delta\}}{\text{max}}h(x,1/36n)\}\label{114}.
\end{align}
Hence, from (\ref{112}),(\ref{113}) and (\ref{114}) we get 
\begin{align*}
    P^+(D^2h)-h_t&\leq \text{max}\{-m/2,\underset{\{|x| \leq 1-\delta\}}{\text{max}}h(x,1/36n),-(36n)^{p+1},-q(6\sqrt{n})^{q+2}c_n^{-2p-2}(36\sqrt{n})^{p+1}\}\\
    &=:-c.
\end{align*}
Note that $h(x,1/36n) <0$ in $|x| \leq 1-\delta$, implies $\underset{\{|x| \leq 1-\delta\}}{\text{max}}h(x,1/36n) <0$. Hence, $c>0$ depends only on dimension and ellipticity constants.
We will define the following subsets of $Q_1(0,1).$
\begin{align*}
    \hat{K_1}&:=(-c_n,c_n)^n \times (0,c_n^2),\\
    \hat{K_3}&:=(-3c_n,3c_n)^n \times (c_n^2,1),
\end{align*}
where $c_n=(10n)^{-1}.$
Note that the way we constructed $h$, it vanishes only in $\{(x,t):|x| \geq 6\sqrt{nt},t<(36n)^{-1}\}$, and $\hat{K}_3$ does not intersects with it, so
\begin{align}\label{al}
	\alpha:=\text{sup}_{\hat{K}_3}h(x,t)<0.
\end{align} 
Now we will define function $\tilde{h}:Q_1(0,1) \rightarrow \R$ as

\[\tilde{h}(x,t)=\begin{cases}
          \frac{-2h(x,t)}{\text{sup}_{\hat{K}_3}h} &\quad \text{whenever $h$ is defined,}\\
          0 &\quad \text{otherwise}.
          \end{cases}
         \]

Take a smooth function $\bar{h}$ defined in $\hat{K}_1$ such that $\bar{h} = \tilde{h}$ on boundary of $\hat{K}_1$ (set theoretic boundary).
Define $h:Q_1 \rightarrow \R$ (this $h$ is not same as the function we were defining earlier) as
\[h(x,t)=\begin{cases}
          \tilde{h}(x,t+1) &\quad \text{if} (x,t)\in Q_1\setminus {K}_1\\
          \bar{h}(x,t+1)  &\quad \text{if} (x,t)\in {K}_1.
          \end{cases}
         \]
Note that $h$ is a Lipschitz function so $\phi(|D h|)$ is bounded, say by $M_1$.\\
Now, choose $r_0>0$ such that 
\begin{align}
    \frac{2c}{\alpha}+\Lambda_0 r_0 \eta \bigg(\frac{1}{r_0}\bigg) M_1 \leq 0,
\end{align}
where $\alpha$ is define in (\ref{al}). This will imply for $r \leq r_0$ that, 
\begin{align}\label{121}
    \frac{2c}{\alpha}+\Lambda_0 r \eta \bigg(\frac{1}{r}\bigg) M_1 \leq 0.
\end{align}
 
Then, we have (in viscosity sense)
\begin{align*}
    P^+(D^2h)-h_t+\tilde{\phi}(|D h|)\leq g,
\end{align*}
and support of $g$ lies in ${K}_1$. 
Hence, $h$ is required barrier function.
\end{proof}
\noindent
\begin{proof}
[Proof of Lemma \ref{measureuniform}]
 By approximating with inf-convolution
\begin{align*}
    u_{\epsilon}(x,t)=\underset{(y,s) \in Q_2}{\text{inf}}\bigg\{u(y,s)+\frac{1}{\epsilon}(|x-y|^2+|t-s|^2)\bigg\}
\end{align*}
We may assume $u$ is semiconcave in $x$ variable and Lipschitz in $x$ and $t$ variable.\\
Define $v(x,t)=u(x,t)+h(x,t)$ in $Q_1$, where $h$ is the barrier function constructed in Lemma \ref{barrier}.
We will choose $r_1$ later such that $r_1 \leq r_0$.\\
Now, using properties of Pucci operators, increasing nature of $\phi$, $r_1 \le r_0$, and properties of barrier function, we obtain $v$ is a viscosity supersolution of  
\begin{align}\label{100}
    \psr(D^2v)-v_t \leq \tilde{\phi}(|Dv|+|Dh|)-\tilde{\phi}(|Dh|)+g.
    \end{align}
Now, extend $v$ by zero in $Q_2$, and denote the convex envelope of $\text{min}\{v,0\}$ by $\Gamma_v$.\\
Using (\ref{100}), we get (in viscosity sense)
\begin{align*}
    \psr(D^2v)-v_t \leq M
\end{align*}
for some $M$.
Then,  $\Gamma_v$ is $C^{1,1}$ with respect to $x$ and Lipschitz with respect to $t$ in $\{v=\Gamma_v\}$ ( see Corollary $3.17$ in \cite{W1}).
Since $v$ is a viscosity supersolution of equation (\ref{100}) and $\Gamma_v \leq v$ we have 
\begin{align}\label{eqgammav}
    P^-(D^2(\Gamma_v))-(\Gamma_v)_t \leq \tilde{\phi}(|D\Gamma_v|+|Dh|)- \tilde{\phi}(|Dh|) +g \hspace{2mm} \text{a.e} \hspace{2mm} \text{in} \hspace{2mm} \{v=\Gamma_v\}.
\end{align}
Define $E:=\{v=\Gamma_v\} \cap \{(x,t) \in Q_1: |D\Gamma_v(x,t)| \leq 1\}$.

\textbf{Claim:} There exists a constant $b,$ depending on $\phi$ and $n$ such that for a.e. in $E$ the following holds:
\begin{align}\label{eqfinal}
    P^-(D^2(\Gamma_v))-(\Gamma_v)_t \leq b |D\Gamma_v|+ \tilde{\phi}(2) +g.
\end{align}
Proof of claim: We will prove this in two cases depending on the value of $|Dh|$ on that point.\\
Case 1:  We assume  $(x,t) \in E$ such that $|Dh|(x,t)\leq 1$. In this case, we can bound $\tilde{\phi}(|D\Gamma_v|+|Dh|)- \tilde{\phi}(|Dh|)$ by $\tilde{\phi}(2)$ because $\phi\geq0$ and we have $|D\Gamma_v|\leq 1$ in $E$.\\
Case 2: We assume $(x,t) \in E$ such that $|Dh(x,t)|\geq 1$.
Since $h$ is Lipschitz, $\text{sup}_{Q_1}$$|Dh|$ exists and it depends only on $n$ owing to the way we constructed $h$. Since $\phi$ is locally Lipschitz, we have
\begin{align}
   \phi(|D\Gamma_v|+|Dh|)- \phi(|Dh|) \leq b |D\Gamma_v|,
\end{align}
Where
\begin{align}\label{defb}
    b= \text{max}\{|D\phi(y)|:1 \leq y \leq \text{sup}_{Q_1}|Dh|+1\}.
\end{align}
Then, we have
\begin{align}
   \tilde{\phi}(|D\Gamma_v|+|Dh|)- \tilde{\phi}(|Dh|) \leq b |D\Gamma_v|.
\end{align}
From both the cases and (\ref{eqgammav}) we get our claim.\\
Recall for given $f$, $Gf(x,t)=(Df(x,t), f(x,t)-x\cdot Df(x,t))$.
Since $\Gamma_v$ is $C^{1,1}$ with respect to $x$  and Lipschitz with respect to $t$ in $\{v=\Gamma_v\}$, by Lemma \ref{glem},  $G{\Gamma}_v$ is Lipschitz continuous in $E$. Now,
 Coarea formula and Lemma \ref{glem} gives
 
\begin{align*}
    \int_{G\Gamma_v(E)}\frac{d\xi dh}{|\xi|^{n+1} + \delta} 
        &\leq \int_{E}\frac{|\text{det}(G\Gamma_v)|}{|D\Gamma_v|^{n+1} + \delta}dxdt\\
   &=\int_{E}\frac{|\partial_t\Gamma_v|\hspace{1mm} |\text{det}D^2(\Gamma_v)|}{|D\Gamma_v|^{n+1} + \delta}dxdt,
\end{align*}
where $\delta >0$ is a small universal constant which will be chosen later.
Now, by definition of Monotone envelope (\ref{medef}), we have $\Gamma_v$ is convex with respect to $x$ variable so $\text{det}D^2(\Gamma_v)\geq0$. Also, $\Gamma_v$ is non-increasing with respect to $t$ so $\partial_t\Gamma_v\leq0$. Hence, we get
\begin{align*}
\int_{G\Gamma_v(E)}\frac{d\xi dh}{|\xi|^{n+1} + \delta}
    &\leq \int_{E}\frac{-\partial_t\Gamma_v\hspace{1mm} \text{det}D^2(\Gamma_v)}{|D\Gamma_v|^{n+1} + \delta}dxdt\\
    &=\frac{\lambda^n}{\lambda^n} \int_{E}\frac{-\partial_t\Gamma_v\hspace{1mm} \text{det}D^2(\Gamma_v)}{|D\Gamma_v|^{n+1} + \delta}dxdt\\
     &\leq\frac{\lambda^k\Lambda^{n-k}}{\lambda^n} \int_{E}\frac{-\partial_t\Gamma_v\hspace{1mm} \text{det}D^2(\Gamma_v)}{|D\Gamma_v|^{n+1} + \delta}dxdt,
\end{align*}
where $k$ is number of positive eigenvalues of $D^2(\Gamma_v)$.\\
Now, write determinant as the product of eigenvalues, club positive eigenvalues with $\lambda$ and negative eigenvalues with $\Lambda$, and use AM-GM inequality to get
\begin{align*}
    \int_{G\Gamma_v(E)}\frac{d\xi dh}{|\xi|^{n+1} + \delta}
    &\leq\frac{n+1}{\lambda^n} \int_{E}\frac{(-\partial_t\Gamma_v\hspace{1mm} + P^-(D^2\Gamma_v))^{n+1}}{|D\Gamma_v|^{n+1} + \delta}dxdt.
\end{align*}
Now, using (\ref{eqfinal}), we get
\begin{align*}
    \int_{G\Gamma_v(E)}\frac{d\xi dh}{|\xi|^{n+1} + \delta}
    &\leq\frac{n+1}{\lambda^n} \int_{E}\frac{( b |D\Gamma_v|+ \tilde{\phi}(2) +g)^{n+1}}{|D\Gamma_v|^{n+1} + \delta}dxdt\\
    &\leq C b^{n+1} + C \frac{(\tilde{\phi}(2))^{n+1}}{\delta} + C\frac{|K_1\cap E|}{\delta},
\end{align*}
where $C$ depends on $n$ and ellipticity constant.\\
Now, $\text{inf}_{K_3}u\leq 1$ and $h \le -2$ in $K_3$ imply
\begin{align*}
    \text{inf}_{K_3} v \leq -1,
\end{align*}
which is the same as saying
\begin{align*}
    \text{sup}_{K_3} (\text{min}\{v,0\})^-\geq 1\\
    \implies \text{sup}_{Q_1} (\text{min}\{v,0\})^-\geq 1.
\end{align*}
Also, $u \geq 0$ and $h=0$ on $\partial_p Q_1$ imply $v \geq 0$ on $\partial_p Q_1$.
Hence, by Lemma \ref{setmeas}, we get 
 \begin{align*}
     \Big\{(\xi,h) \in \R^n \times \R:|\xi| \leq \frac{1}{4}, \frac{5}{8} \leq -h \leq \frac{6}{8}\Big\} \subset {G\Gamma_v(E)}.
 \end{align*}
Thus, we have
 \begin{align}\label{eqcon}
   \frac{1}{8} \int_{|\xi|<1/4}\frac{d\xi dh}{|\xi|^{n+1} + \delta}
   \leq C b^{n+1} + C \frac{(\tilde{\phi}(2))^{n+1}}{\delta} + C\frac{|K_1\cap E|}{\delta}.
\end{align}
Since $b$ and $C$ are universal constants, and
\begin{equation*}
\int_{|\xi|<1/4}\frac{d\xi}{|\xi|^{n+1} + \delta} \rightarrow\infty \hspace{2mm}\text{as} \hspace{2mm} \delta \rightarrow 0,
\end{equation*}
we can choose a universal constant $\delta >0$ such that
\begin{align}
     \frac{1}{8} \int_{|\xi|<1/4}\frac{d\xi dh}{|\xi|^{n+1} + \delta}
   \geq C b^{n+1} +2.
\end{align}
Chosen $\delta>0$, choose $r_1 \leq r_0$ universal such that
\begin{align*}
    C(r_1\eta(1/r_1)\phi(2))^{n+1}\leq \delta, \hspace{2mm}\text{which is the same as}\hspace{2mm} C\frac{(\tilde{\phi}(2))^{n+1}}{\delta}\leq1.
\end{align*}
Hence (\ref{eqcon}) becomes
\begin{align*}
     1\leq  C\frac{|K_1\cap E|}{\delta}.
\end{align*}
Take $\mu_1=\frac{\delta}{C}$, which is a universal constant. By definition of $E$, we have
\begin{align*}
    \mu_1 &\leq |K_1\cap E|
    \leq |K_1 \cap \{v=\Gamma_v\}|.
\end{align*}
Using $\Gamma_v$ is monotone envelope of $\text{min}\{v,0\}$ we get,
\begin{align*}
    \mu_1 
    &\leq |K_1 \cap \{v\leq 0\}|.
\end{align*}
Substitute $v=u+h$ to get
\begin{align*}
    |K_1 \cap \{u\leq -h\}| \geq \mu_1.
\end{align*}
Take $L=\text{max}_{Q_1}(-h)$. Then, we have
\begin{align*}
    |K_1 \cap \{u\leq L\}| \geq \mu_1,
\end{align*}
which implies,
\begin{align*}
    |\{(x,t) \in K_1 : u(x,t) \leq L\}| \geq |K_1|-\mu_1.
\end{align*}
Take $\mu=\frac{|K_1|-\mu_1}{|K_1|}$ and get conclusion.
\end{proof}


\begin{thebibliography}{99}

\bibitem{AJ}
B. Avelin \& V. Julin; A Carleson type inequality for fully nonlinear elliptic equations with non-Lipschitz drift term. (English summary)
J. Funct. Anal. 272 (2017), no. 8, 3176-3215.
\bibitem{Ca}
L. Caffarelli; Interior a priori estimates for solutions of fully nonlinear equations.
Ann. of Math. (2) 130, (1989) no. 1, 189-213.
\bibitem{cc}
L. Caffarelli \& X. Cabre ;Fully nonlinear elliptic equations. American mMathematical Soceity Colloquim Publications, $43$. American Mathematical society, Providence, RI (1995).


\bibitem{Di}
E. DiBenedetto; Degenerate parabolic equations. Universitext. Springer-Verlag, New York, 1993.
\bibitem{DGV1}
E. DiBenedetto, U. Gianazza \& V. Vespri; Harnack estimates for quasi-linear degenerate parabolic differential equations. Acta Math. 200 (2008), no. 2, 181-209.
\bibitem{DGV2}
\bysame; Forward, backward and elliptic Harnack inequalities for non-negative solutions to certain singular parabolic partial differential equations. (English summary)
Ann. Sc. Norm. Super. Pisa Cl. Sci. (5) 9 (2010), no. 2, 385-422.
\bibitem{DGV3}
\bysame; Harnack's inequality for degenerate and singular parabolic equations. Springer Monographs in Mathematics. Springer, New York, 2012. xiv+278 pp. ISBN: 978-1-4614-1583-1
\bibitem{IS1}
C. Imbert \& L. Silvestre; Estimates on elliptic equations that hold only where the gradient is large. J. Eur. Math. Soc. (JEMS) 18 (2016), no. 6, 1321-1338.

\bibitem{is}
\bysame;   Introduction to fully nonlinear parabolic equations. 2012. hal-00798300.
\bibitem{j}
V. Julin; Generalized Harnack inequality for Nonhomogeneous Elliptic Equations. Arch. Rational Mech. Anal. 216 (2015) 673-702.



\bibitem{KS}
N.  Krylov \& M.  Safonov; A property of the solutions of parabolic equations with measurable coefficients. (Russian) Izv. Akad. Nauk SSSR Ser. Mat. 44 (1980), no. 1, 161-175.

\bibitem{Ku}
T. Kuusi; Harnack estimates for weak supersolutions to nonlinear degenerate parabolic equations. Ann. Sc. Norm. Super. Pisa Cl. Sci. (5) 7 (2008), no. 4, 673-716.

\bibitem{PV}
M. Paraviainen \& J. Vasquez; Equivalence between radial solutions of different parabolic gradient-diffusion equations and applications, arXiv:1801.00613.\bibitem{Sa}
O. Savin; Small perturbation solutions for elliptic equations. Comm. Partial Differential Equations 32, (2007), 557-578.

\bibitem{W1}
L. Wang; On the regularity theory of fully nonlinear parabolic equations. I, Comm. Pure Appl. Math. 45 (1) (1992) 27-76.

\bibitem{Wa}
Y. Wang; Small perturbation solutions for parabolic equations, Indiana Univ. Math. J. 62 (2) (2013) 671-697.
\end{thebibliography}
\end{document}